\newtheorem{thm}{Theorem}[section]
\newtheorem{Lemma}[thm]{Lemma}
\newtheorem{Proposition}[thm]{Proposition}
\newtheorem{Corollary}[thm]{Corollary}
\newtheorem*{thm*}{Theorem}
\theoremstyle{definition}
\newtheorem{Definition}[thm]{Definition}
\newtheorem{Remark}[thm]{Remark}
\newtheorem{Example}[thm]{Example}
\newtheorem{say}[thm]{}
\definecolor{wwwwww}{rgb}{0.4,0.4,0.4}
\renewcommand{\P}{\mathbb{P}}
\newcommand{\G}{\mathcal{G}}
\newcommand{\LG}{\mathcal{LG}}
\newcommand{\C}{\mathbb{C}}
\newcommand{\Sp}{Sp}
\DeclareMathOperator{\pf}{pf}
\DeclareMathOperator{\Sec}{Sec}
\DeclareMathOperator{\expdim}{expdim}
\begin{document}
\title[\resizebox{6.1in}{!}{Projective aspects of the Geometry of Lagrangian Grassmannians and Spinor varieties}]{Projective aspects of the Geometry of Lagrangian Grassmannians and Spinor varieties}

\author[Ageu Barbosa Freire]{Ageu Barbosa Freire}
\address{\sc Ageu Barbosa Freire\\
Instituto de Matem\'atica e Estat\'istica, Universidade Federal Fluminense, Campus Gragoat\'a, Rua Alexandre Moura 8 - S\~ao Domingos\\
24210-200 Niter\'oi, Rio de Janeiro\\ Brazil}
\email{ageufreire@id.uff.br}

\author[Alex Massarenti]{Alex Massarenti}
\address{\sc Alex Massarenti\\ Dipartimento di Matematica e Informatica, Universit\`a di Ferrara, Via Machiavelli 30, 44121 Ferrara, Italy\newline
\indent Instituto de Matem\'atica e Estat\'istica, Universidade Federal Fluminense, Campus Gragoat\'a, Rua Alexandre Moura 8 - S\~ao Domingos\\
24210-200 Niter\'oi, Rio de Janeiro\\ Brazil}
\email{alex.massarenti@unife.it, alexmassarenti@id.uff.br}

\author[Rick Rischter]{Rick Rischter}
\address{\sc Rick Rischter\\
Universidade Federal de Itajub\'a\\ 
Av. BPS 1303, Bairro Pinheirinho\\ 
37500-903 Itajub\'a, Minas Gerais\\ 
Brazil}
\email{rischter@unifei.edu.br}

\date{\today}
\subjclass[2010]{Primary 14N05, 14N15, 14M15; Secondary 14E05, 15A69, 15A75}
\keywords{Lagrangian Grassmannians, Spinor varieties, osculating spaces, secant varieties, degenerations of rational maps}

\begin{abstract}
We study the projective behavior, mainly with respect to osculating spaces and secant varieties, of Lagrangian Grassmannians and Spinor varieties. We prove that these varieties have osculating dimension smaller than expected. Furthermore, we give numerical conditions ensuring the non secant defectivity of Lagrangian Grassmannians in their Pl\"ucker embedding and of Spinor varieties in both their Pl\"ucker and Spinor embeddings. 
\end{abstract}

\maketitle
\tableofcontents

\section{Introduction}
Let $V$ be a vector space endowed with a non-degenerate quadratic form $Q$ or, when $\dim(V)$ is even, with a non-degenerated symplectic form $\omega$. For $r\leq\frac{\dim(V)}{2}$ the isotropic Grassmannians $\mathcal{G}_{Q}(r,V), \mathcal{G}_{\omega}(r,V)$ are the subvarieties of the Grassmannian $\mathcal{G}(r,V)$ parametrizing $r$-dimensional subspaces of $V$ that are isotropic with respect to $Q$ and $\omega$ respectively.  

All isotropic Grassmannians, with the exception of the symmetric case when $\dim(V) = 2n$ is even and $r = n$, are irreducible. Furthermore, in the exceptional case the isotropic Grassmannian $\mathcal{G}_{Q}(n,V)$ has two connected components each one parametrizing the linear subspaces in one of the two families of $(n-1)$-planes of $\mathbb{P}(V)$ contained in the smooth quadric hypersurface in $\mathbb{P}(V)$ defined by $Q$. Either of these two isomorphic components is called the $\frac{n(n-1)}{2}$-dimensional Spinor variety and denoted by $\mathcal{S}_n$.

The restriction of the Pl\"ucker embedding of $\mathcal{G}(n,V)$ induces an embedding $\mathcal{S}_n\rightarrow\mathbb{P}(\bigwedge^nV_{+})$. However, this is not the minimal homogeneous embedding of $\mathcal{S}_n$ that we will denote by $\mathcal{S}_n\rightarrow\mathbb{P}(\Delta)$ and refer to as the Spinor embedding. The Pl\"ucker embedding of $\mathcal{S}_n$ can be obtained by composing the Spinor embedding with the degree two Veronese embedding. 

In the skew-symmetric case, again when $d = 2n$ is even and $r = n$, the isotropic Grassmannian $\mathcal{G}_{\omega}(n,V)$ is called the $\frac{n(n+1)}{2}$-dimensional Lagrangian Grassmannian and denoted by $\mathcal{LG}(n,2n)$. Unlike the case of the Spinor variety, the restricting of the Pl\"ucker embedding of $\mathcal{G}(n,V)$ yields the minimal homogeneous embedding of $\mathcal{LG}(n,2n)$ that we will denote by $\mathcal{LG}(n,2n)\rightarrow\mathbb{P}(V_{\omega_n})$.

Lagrangian Grassmanninas and Spinor varieties have been widely studied both from the geometrical and the representational theoretical viewpoint \cite{LM03}, \cite{Ma09}, \cite{BB11}, \cite{IR05}, \cite{An11}, \cite{SV10}, \cite{Pe12}. In this paper we will focus on the projective geometry of these varieties, mainly on the dimension of their osculating spaces and secant varieties.

Let $X\subset\mathbb{P}^N$ be an irreducible variety of dimension $n$, and $p\in X$ a smooth point. The \textit{$s$-osculating space} $T_p^{s}X$ of $X$ at $p$ is essentially the linear subspace of $\mathbb{P}^N$ generated by the partial derivatives of order less or equal than $s$ of a local parametrization of $X$ at $p$. Note that while the dimension of the tangent space at a smooth point is always equal to the dimension of the variety, the dimension of higher order osculating spaces can be strictly smaller than expected even at a general point. In general, we have $\dim(T_p^s X) = \min\left\{\binom{n+s}{n}-1-\delta_{s,p},N\right\}$, where $\delta_{s,p}$ is the number of independent differential equations of order less or equal than $s$ satisfied by $X$ at $p$. Such dimension is called the general $s$-osculating dimension of $X$. Projective varieties having general $s$-osculating dimension smaller than expected were introduced and studied in \cite{Se07}, \cite{Te12}, \cite{Bo19}, \cite{To29}, \cite{To46}, and more recently in \cite{DDI13}, \cite{DI15}, \cite{DIV14}, \cite{DJL17}, \cite{FI02}, \cite{Il99}, \cite{Il06}, \cite{MMO13}, \cite{PT90}. In Corollaries \ref{dimOscLG}, \ref{dim_Osc_S1}, \ref{dim_osc_S2} we compute the general $s$-osculating dimension of $\mathcal{LG}(n,2n)$ in the Pl\"ucker embedding and of $\mathcal{S}_n$ in both the Pl\"{u}cker and the Spinor embedding. In particular, we have the following result.

\begin{thm}\label{main_1}
The Lagrangian Grassmannian $\mathcal{LG}(n,2n)\subset\mathbb{P}(V_{\omega_n})$ and the Spinor variety $\mathcal{S}_n\subset\mathbb{P}(\bigwedge^nV_{+})$ have $s$-osculating dimension smaller than expected respectively for $2\leq s\leq n-1$ and for $2\leq s\leq 2\lfloor\frac{n}{2}\rfloor-1$. Furthermore, for any $s\geq 0$ we have $T_p^s \mathcal{LG}(n,2n) = T_p^s\mathcal{G}(n,V)\cap \mathbb{P}(V_{\omega_n})$ for any $p\in \mathcal{LG}(n,2n),$ and $T_p^s \mathcal{S}_n = T_p^s\mathcal{G}(n,V)\cap \mathbb{P}(\bigwedge^nV_{+})$ for any $p\in \mathcal{S}_n$.

Finally, the Spinor variety $\mathcal{S}_n\subset\mathbb{P}(\Delta)$ has $s$-osculating dimension smaller than expected for $2\leq s\leq \lfloor\frac{n}{2}\rfloor-1$, and 
$T_p^n \mathcal{S}_n =\mathbb{P}(\Delta)$. 
\end{thm}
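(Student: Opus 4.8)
The plan is to use the homogeneity of $\mathcal{LG}(n,2n)$, $\mathcal{S}_n$ and $\mathcal{G}(n,V)$ to reduce every assertion to the highest weight point $p$, and to read off the osculating spaces from the big-cell parametrization. Writing $W\cong\C^n$, a point of $\mathcal{G}(n,V)$ near $p$ is the row span of $[\,I_n\mid B\,]$ with $B$ arbitrary; the locus $\mathcal{LG}(n,2n)$ is cut out by $B=A$ symmetric and the locus $\mathcal{S}_n$ by $B=A$ skew-symmetric. In these coordinates the Pl\"ucker coordinates of $\mathcal{G}(n,V)$ are the minors $\det B_{S,T}$, those of $\mathcal{LG}(n,2n)$ are the minors $\det A_{S,T}$ of the symmetric matrix $A$, the spinor coordinates of $\mathcal{S}_n\subset\mathbb{P}(\Delta)$ are the sub-Pfaffians $\pf(A_{S,S})$ of the skew matrix $A$, and --- by the Veronese factorization recalled in the introduction --- the Pl\"ucker coordinates of $\mathcal{S}_n\subset\mathbb{P}(\bigwedge^nV_+)$ are the products of two sub-Pfaffians. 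All three varieties are (co)minuscule, so the associated grading has length one and the tangent space $T_p$ is abelian; consequently $T_p^s X=\mathbb{P}\big(\bigoplus_{k\le s}V[-k]\big)$, where $V[-k]$ denotes the weight-$k$ graded piece of the ambient representation under the one-parameter subgroup defining the big cell, spanned by the coefficients of the above coordinates that are homogeneous of degree $k$ in the entries of $B$.

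I would first settle the two identification statements, which hold for every $s\ge0$ independently of the defectivity count. Choosing the one-parameter subgroup that defines the big cell of $\mathcal{G}(n,V)$ inside the maximal torus of $\Sp(2n)$ (respectively $\SO(2n)$), it acts as $+1$ on $U$ and $-1$ on $U^*$ and therefore coincides with the subgroup defining the big cell of $\mathcal{LG}(n,2n)$ (respectively $\mathcal{S}_n$). Hence $V_{\omega_n}$ and $\bigwedge^nV_+$ are graded subspaces of $\bigwedge^nV$ and $V_{\omega_n}[-k]=(\bigwedge^nV)[-k]\cap V_{\omega_n}$, and similarly for $\bigwedge^nV_+$. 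Since forming the sum of graded pieces of degree $\le s$ commutes with intersecting against a graded subspace, this gives $T_p^s\mathcal{LG}(n,2n)=T_p^s\mathcal{G}(n,V)\cap\mathbb{P}(V_{\omega_n})$ and $T_p^s\mathcal{S}_n=T_p^s\mathcal{G}(n,V)\cap\mathbb{P}(\bigwedge^nV_+)$; for $\mathcal{LG}(n,2n)$ the inclusion "$\subseteq$" is in any case immediate because it is the linear section $\mathcal{G}(n,V)\cap\mathbb{P}(V_{\omega_n})$.

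The defectivity is governed, level by level, by the failure of the multiplication $S^k(T_p)\to V[-k]$ to be injective. Here I would invoke the representation-theoretic description of the graded pieces as $GL(W)$-modules: the $k\times k$ minors of a generic symmetric matrix are antisymmetric in their row and in their column indices and span the Schur module $\mathbb{S}_{(2^k)}W$, so $V_{\omega_n}[-k]\cong\mathbb{S}_{(2^k)}W$; the $2k\times2k$ sub-Pfaffians of a generic skew matrix are totally antisymmetric and linearly independent, so $\Delta[-k]\cong\bigwedge^{2k}W=\mathbb{S}_{(1^{2k})}W$. By the classical plethysms $S^k(S^2W)=\bigoplus_{\mu\vdash k}\mathbb{S}_{2\mu}W$ and $S^k(\bigwedge^2W)=\bigoplus_{\lambda}\mathbb{S}_\lambda W$ (over partitions $\lambda\vdash 2k$ with even column lengths), the realized summand $\mathbb{S}_{(2^k)}W$ (from $\mu=(1^k)$), respectively $\bigwedge^{2k}W$ (from $\lambda=(1^{2k})$), is \emph{proper} as soon as $k\ge2$, because the Cartan summand $\mathbb{S}_{(2k)}W$, respectively $\mathbb{S}_{(k,k)}W$, is then a distinct nonzero module. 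Thus $\dim V[-k]<\dim S^k(T_p)$ for every $k\ge2$, while equality holds for $k=0,1$.

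It remains to convert this per-level defect into the stated ranges by locating the saturation degree $s_0$, the largest $k$ with $V[-k]\ne0$. For $\mathcal{LG}(n,2n)$ one has $V_{\omega_n}[-n]=\mathbb{S}_{(2^n)}W=\C\cdot\det A$, so $s_0=n$; for $\mathcal{S}_n\subset\mathbb{P}(\Delta)$ the pieces $\bigwedge^{2k}W$ satisfy $\sum_k\binom n{2k}=2^{n-1}=\dim\Delta$, so $s_0=\lfloor n/2\rfloor$; and for $\mathcal{S}_n\subset\mathbb{P}(\bigwedge^nV_+)$ a product of two sub-Pfaffians has top Pfaffian-degree $2\lfloor n/2\rfloor$, so $s_0=2\lfloor n/2\rfloor$. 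For $s\ge s_0$ the filtration has saturated, $T_p^s X=\mathbb{P}(V)$, and the dimension equals the capped expected value $N$; for $2\le s\le s_0-1$ one has $\dim T_p^s X=\sum_{k\le s}\dim V[-k]-1$, which is strictly smaller than $\min\{\binom{n_X+s}{s}-1,N\}$ with $n_X=\dim X$ --- strictly below $N$ because the filtration is not yet saturated, and strictly below $\binom{n_X+s}{s}-1=\sum_{k\le s}\dim S^k(T_p)-1$ by the per-level defect. This yields the three ranges $2\le s\le n-1$, $\ 2\le s\le\lfloor n/2\rfloor-1$, $\ 2\le s\le2\lfloor n/2\rfloor-1$, and in particular $T_p^n\mathcal{S}_n=\mathbb{P}(\Delta)$ since $n\ge\lfloor n/2\rfloor=s_0$. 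The main obstacle is the representation-theoretic input of the third paragraph --- proving that the symmetric minors span exactly $\mathbb{S}_{(2^k)}W$ and that the principal sub-Pfaffians are independent --- together with the analogous but more laborious computation, for the Pl\"ucker embedding $\mathcal{S}_n\subset\mathbb{P}(\bigwedge^nV_+)$, that the span of degree-$k$ products of two sub-Pfaffians is a proper submodule of $S^k(\bigwedge^2W)$ for $2\le k\le s_0-1$, which is what feeds the per-level defect in that case.
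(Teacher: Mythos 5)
Your proposal takes a genuinely different route from the paper, and for two of the three cases it is sound; but it contains one genuine gap in the third. The paper proves everything by brute force in the big cell: it differentiates the minors $\det(M_J)$ via Laplace expansion and the sub-Pfaffians $\pf(A_I)$ directly (Propositions \ref{PropOscLG}, \ref{Osc_S_Pl}, \ref{osc_S2}), reads off the spanning vectors $\epsilon(\sigma_J)e_J\pm\epsilon(\sigma_{J'})e_{J'}$, exhibits the linear equations cutting out $\mathbb{P}(V_{\omega_n})$ and $\mathbb{P}(\bigwedge^nV_+)$ to get the intersection statements, and only \emph{afterwards} records the representation-theoretic dictionary (Corollary \ref{cor_rep_th}, via \cite[Proposition 2.3]{LM03}). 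You invert this order: you take the $U\mathfrak{g}$-filtration description as input, use the abelian nilradical to identify $T_p^s$ with the span of the graded pieces of weight at most $s$ under the cocharacter, and obtain the intersection statements for free because $V_{\omega_n}$ and $\bigwedge^nV_+$ are graded subspaces of $\bigwedge^nV$; the defect then comes from plethysm rather than from counting derivative vectors. For $\mathcal{LG}(n,2n)$ and for $\mathcal{S}_n\subset\mathbb{P}(\Delta)$ this works and reproduces the paper's formulas in Corollaries \ref{dimOscLG} and \ref{dim_osc_S2}, since $\dim\mathbb{S}_{(2^k)}W=\frac{1}{2}\binom{n}{k}\bigl(\binom{n}{k}+1\bigr)$ and $\dim\bigwedge^{2k}W=\binom{n}{2k}$; what your route buys is conceptual transparency (the defect is literally the missing plethysm summands $\mathbb{S}_{(2k)}W$, resp.\ $\mathbb{S}_{(k,k)}W$), at the price of importing \cite{LM03} and the classical facts you flag, which the paper's elementary computation avoids.

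The gap is in the Pl\"ucker--spinor case. You defer, but assert, that the span of the degree-$k$ products of two sub-Pfaffians is a \emph{proper} submodule of $S^k(\bigwedge^2W)$ for all $2\le k\le s_0-1$; at $k=2$ this is false. The level-$2$ graded piece of $\bigwedge^nV_+$ is spanned by the $2\times 2$ minors $\det A_{S,T}$ of the skew-symmetric matrix $A$, which satisfy only the symmetry $\det A_{S,T}=\det A_{T,S}$; by the paper's own count (Proposition \ref{Osc_S_Pl}, Corollary \ref{dim_Osc_S1}) this piece has dimension $\frac{1}{2}\binom{n}{2}\bigl(\binom{n}{2}+1\bigr)=\dim S^2(\bigwedge^2W)$, so there is no per-level defect at $k=2$. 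Since $\dim T_p\mathcal{S}_n=\binom{n}{2}=n_X$, the cumulative count at $s=2$ is $1+n_X+\binom{n_X+1}{2}=\binom{n_X+2}{2}$, i.e.\ $\dim T_p^2\mathcal{S}_n$ equals the expected value $\binom{n_X+2}{2}-1$ exactly (for $n=4$: $27=\min\{27,34\}$; for $n=5$: $65=\min\{65,125\}$). So your mechanism only produces a defect from $k=3$ on, and the smaller-than-expected range for $\mathcal{S}_n\subset\mathbb{P}(\bigwedge^nV_+)$ really begins at $s=3$ --- note that the theorem's stated range $2\le s\le 2\lfloor\frac{n}{2}\rfloor-1$ is itself inconsistent at $s=2$ with Corollary \ref{dim_Osc_S1}, so the statement should be corrected there rather than proved. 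Your representation-theoretic computation, carried out honestly, would detect exactly this; as written, however, your proposal asserts the false properness at $k=2$ and would therefore ``prove'' a claim that the dimension count refutes.
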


In Section \ref{sec_def} we study the dimension of the secant varieties of Lagrangian Grassmannians and Spinor varieties. The \textit{$h$-secant variety} $\mathbb{S}ec_{h}(X)$ of a non-degenerate $n$-dimensional variety $X\subset\mathbb{P}^N$ is the Zariski closure of the union of all linear spaces spanned by collections of $h$ points of $X$. Secant varieties of homogeneous varieties are fundamental objects for instance in tensor decomposition problems. Indeed, they have been used to construct and study moduli spaces for additive decompositions of a general tensor into a given number of rank one tensors \cite{Do04}, \cite{DK93}, \cite{Ma16}, \cite{MM13}, \cite{RS00}, \cite{TZ11}, \cite{BGI11}.

The \textit{expected dimension} of $\mathbb{S}ec_{h}(X)$ is $\expdim(\mathbb{S}ec_{h}(X)):= \min\{nh+h-1,N\}$. The actual dimension of $\mathbb{S}ec_{h}(X)$ may be smaller than the expected one. Following \cite{Za93}, we say that $X$ is \textit{$h$-defective} if $\dim(\mathbb{S}ec_{h}(X)) < \expdim(\mathbb{S}ec_{h}(X))$. Determining secant defectivity is a classical problem in algebraic geometry which goes back to the Italian school \cite[Chapter 10]{Ca37}, \cite{Sc08}, \cite{Se01}, \cite{Te11}. 

We tackle secant defectivity of Lagrangian Grassmannians and Spinor varieties following the strategy introduced in \cite{MR19}, which we now explain. Given general points $x_1,\dots,x_h\in X\subset\mathbb{P}^N$, consider the linear projection $\tau_{X,h}:X\subseteq\mathbb{P}^N\dasharrow\mathbb{P}^{N_h}$, with center $\left\langle T_{x_1}X,\dots,T_{x_h}X\right\rangle$, where $N_h:=N-1-\dim (\left\langle T_{x_1}X,\dots,T_{x_h}X\right\rangle)$.

By \cite[Proposition 3.5]{CC02}, if $\tau_{X,h}$ is generically finite then $X$ is not $(h+1)$-defective. In \cite{MR19} a new strategy  was developed, based on the more general \emph{osculating projections} instead of just tangential projections. Given $p_1,\dots, p_l\in X$ general points, we consider the linear projection $\Pi_{T^{k_1,\dots,k_l}_{p_1,\dots,p_l}}:X\subset\mathbb{P}^N\dasharrow\mathbb{P}^{N_{k_1,\dots,k_l}}$ with center $\left\langle T_{p_1}^{k_1}X,\dots, T_{p_l}^{k_l}X\right\rangle$, and call it a \textit{$(k_1+\dots +k_l)$-osculating projection}, where $N_{k_1,\dots,k_l}:=N-1-\dim (\left\langle T_{p_1}^{k_1}X,\dots, T_{p_l}^{k_l}X\right\rangle)$. Under suitable conditions, one can degenerate the linear span of several tangent spaces $T_{x_i}X$ into a subspace contained in a single osculating space $T_p^{k}X$. 
So the tangential projections $\tau_{X,h}$ degenerate to a linear projection with center contained in the linear span of osculating spaces $\left\langle T_{p_1}^{k_1}X,\dots, T_{p_l}^{k_l}X\right\rangle$. If $\Pi_{T^{k_1,\dots,k_l}_{p_1,\dots,p_l}}$ is generically finite, then $\tau_{X,h}$ is also generically finite, and one concludes that $X$ is not $(h+1)$-defective. The advantage of this approach is that we are allowed  to consider osculating spaces at much less points than $h$, and consequently to control the dimension of the general fiber of the projection. 

This strategy was successfully applied to study the problem of secant defectivity for Grassmannians \cite{MR19} and Segre-Veronese varieties \cite{AMR17}. Here we apply it to Lagrangian Grassmannians and Spinor varieties. While for the Pl\"ucker embeddings, thanks to the relation among the osculating space of these varieties and those of $\mathcal{G}(n,V)$ in Theorem \ref{main_1}, our arguments boil down to the main results on the osculating behavior of $\mathcal{G}(n,V)$ in \cite{MR19}, for $\mathcal{S}_n$ in its Spinor embedding more complicated computations are needed.  

At the best of our knowledge very few is know about secant defectivity of these varieties \cite{BB11}, \cite{An11}. In particular, it has been conjectured that $\mathbb{S}ec_h(\mathcal{LG}(n,2n))$ has the expected dimension except for the cases $(n,h)\in\{(4,3),(4,4)\}$ \cite[Conjecture 1.2]{BB11}. The main results in Theorems \ref{Main_LG}, \ref{main_SP}, \ref{th_main_SS} can be summarized as follows.

\begin{thm}\label{main_2}
The Lagrangian Grassmannian $\mathcal{LG}(n,2n)\subset\mathbb{P}(V_{\omega})$ in its Pl\"ucker embedding is not $h$-defective for $h\leq\left\lfloor\frac{n+1}{2}\right\rfloor$. Furthermore, the Spinor variety $\mathcal{S}_n\subset\mathbb{P}(\bigwedge^nV_{+})$ in its Pl\"ucker embedding is not $h$-defective for $h\leq\left\lfloor\frac{n}{2}\right\rfloor$.

Finally, the Spinor variety $\mathcal{S}_n\subset\mathbb{P}(\Delta)$ in its Spinor embedding is not $h$-defective for $h\leq\left\lfloor\frac{n+2}{4}\right\rfloor$.
\end{thm}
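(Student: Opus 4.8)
The plan is to combine the Chiantini--Ciliberto criterion \cite[Proposition 3.5]{CC02} with the osculating degeneration technique of \cite{MR19}. Recall that to show a variety $X\subset\mathbb{P}^N$ is not $(h+1)$-defective it suffices to produce a generically finite osculating projection $\Pi_{T^{k_1,\dots,k_l}_{p_1,\dots,p_l}}$ whose center, after specializing $h$ general points, contains the span of the corresponding $h$ tangent spaces. The number of tangent spaces that can be absorbed into a single osculating space $T^{k}_p X$ is controlled by $\dim(T^k_p X)$, so in each of the three cases the numerical threshold on $h$ will drop out of a dimension count once generic finiteness has been checked.

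For the two Pl\"ucker embeddings I would start from Theorem \ref{main_1}, which identifies $T_p^s \mathcal{LG}(n,2n) = T_p^s\mathcal{G}(n,V)\cap \mathbb{P}(V_{\omega_n})$ and $T_p^s \mathcal{S}_n = T_p^s\mathcal{G}(n,V)\cap \mathbb{P}(\bigwedge^nV_{+})$. I would then degenerate the $h$ tangent spaces of $\mathcal{LG}(n,2n)$, resp. $\mathcal{S}_n$, into osculating spaces at fewer points, intersecting the degeneration of tangent spaces of $\mathcal{G}(n,V)$ worked out in \cite{MR19} with the linear subspaces $\mathbb{P}(V_{\omega_n})$, resp. $\mathbb{P}(\bigwedge^nV_{+})$. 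Since the restriction to $\mathcal{LG}(n,2n)$, resp. $\mathcal{S}_n$, of the generically finite osculating projection of $\mathcal{G}(n,V)$ established in \cite{MR19} factors through the osculating projection of the subvariety, the latter is generically finite as well, and the thresholds $\lfloor\frac{n+1}{2}\rfloor$ and $\lfloor\frac{n}{2}\rfloor$ are exactly the largest values of $h$ for which the accompanying dimension count closes.

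The genuinely new case is the Spinor embedding $\mathcal{S}_n\subset\mathbb{P}(\Delta)$, where by Theorem \ref{main_1} the osculating spaces are strictly smaller than the ones pulled back from $\mathcal{G}(n,V)$, so no reduction to \cite{MR19} is available and a direct computation is forced. Here I would fix an affine chart of $\mathcal{S}_n$ adapted to the grading of the half-spin representation $\Delta$, write an explicit local parametrization, and read off each osculating space $T^k_p\mathcal{S}_n$ as the span of the graded pieces of bounded order. Generic finiteness of $\Pi_{T^{k_1,\dots,k_l}_{p_1,\dots,p_l}}$ would then be established by showing that the differential of the projected parametrization has maximal rank at a general point, equivalently that the general fiber is $0$-dimensional.

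I expect this last step to be the main obstacle. Because the osculating spaces in $\mathbb{P}(\Delta)$ are smaller, fewer tangent spaces can be absorbed at each point, and the fiber analysis requires tracking precisely which spinorial coordinate monomials survive the projection; the combinatorics of the half-spin representation is markedly more delicate than the wedge-power bookkeeping that suffices for $\mathcal{G}(n,V)$. The threshold $\lfloor\frac{n+2}{4}\rfloor$ should emerge as the largest $h$ for which this general fiber can still be forced to vanish, which explains why it is roughly half of the Pl\"ucker bound $\lfloor\frac{n}{2}\rfloor$.
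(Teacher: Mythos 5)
Your high-level frame (osculating projections plus the degeneration technique of \cite{MR19}) is indeed the paper's, but both of your key reductions have genuine gaps. The first is the claim that generic finiteness of the osculating projection of $\mathcal{G}(n,V)$ ``established in \cite{MR19}'' yields generic finiteness of the osculating projections of $\mathcal{LG}(n,2n)$ and $\mathcal{S}_n$. The factorization you invoke is correct: since $T^k_p\mathcal{LG}(n,2n)=T^k_p\mathcal{G}(n,V)\cap\mathbb{P}(V_{\omega_n})$, the restriction of $\Pi_{T^k_p\mathcal{G}(n,V)}$ to $\mathcal{LG}(n,2n)$ factors through $\Pi_{T^k_p\mathcal{LG}(n,2n)}$. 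But to conclude you would need that \emph{restriction} to be generically finite, and \cite{MR19} only gives generic finiteness on $\mathcal{G}(n,V)$ itself: a general point of the proper subvariety $\mathcal{LG}(n,2n)$ is not a general point of $\mathcal{G}(n,V)$, and nothing prevents $\mathcal{LG}(n,2n)$ from sitting inside the locus where the ambient projection has positive-dimensional fibers. The paper does not attempt this reduction; instead Proposition \ref{proj_osc} proves birationality of $\Pi_{T^s_p}$ on the subvarieties directly, observing that the projection retains exactly the complementary-size minors of the symmetric matrix $A$ (resp.\ the sub-Pfaffians of the skew-symmetric matrix), from which one reconstructs $A^{-1}$, hence $A$ (resp.\ uses the Pfaffian inversion formula).

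The second gap is that you never establish the absorption property itself, i.e.\ that spans of tangent spaces at several points of these particular varieties degenerate into a subspace of a single higher osculating space; this is the strong $2$-osculating regularity required by Theorem \ref{TheoAR_2} (which is \cite[Theorem 6.2]{MR19} with $m=2$, $l=1$), and it is the technical heart of the paper. For the Pl\"ucker embeddings your idea of intersecting the ambient degeneration with $\mathbb{P}(V_{\omega_n})$, resp.\ $\mathbb{P}(\bigwedge^nV_{+})$, can be made rigorous, but only after checking (i) that the curves realizing the degeneration for $\mathcal{G}(n,V)$, namely $M_t=(I,tA)$ with $A$ symmetric resp.\ skew-symmetric, lie inside $\mathcal{LG}(n,2n)$ resp.\ $\mathcal{S}_n$, and (ii) that flat limits behave well under intersection with a fixed linear space, $\lim_{t\to 0}(H_t\cap H)\subseteq(\lim_{t\to 0}H_t)\cap H$ (Lemma \ref{lemma_simp}); together with osculating well-behavedness these give Proposition \ref{Prop5}, hence Propositions \ref{2OscRLG} and \ref{2OscRSP}. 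For the Spinor embedding no ambient reduction is possible, and the hard step is not, as you suggest, the fiber analysis of the projection --- that is the easy Pfaffian-inversion part of Proposition \ref{proj_osc} --- but Proposition \ref{2OscRSS}: a direct flat-limit computation which, for every spinor coordinate $P_I$ with $|I|>2s_1+2s_2+2$, produces a hyperplane containing the whole family $T_t$, and whose solvability reduces to the nonvanishing of a binomial-coefficient determinant via \cite[Corollary 2]{GV85}. Finally, the thresholds do not come from a dimension count: they come from Theorem \ref{TheoAR_2}, giving non-$(h+1)$-defectivity for $h=\lfloor\frac{k+1}{2}\rfloor$, where $k=n-2$, $k=2\lfloor\frac{n}{2}\rfloor-2$ and $k=\lfloor\frac{n}{2}\rfloor-2$ are the maximal orders of birational osculating projections in the three cases.
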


In Section \ref{sec_def} we observe that Theorem \ref{main_2} improves the main results on secant defectivity of $\mathcal{LG}(n,2n)\subset\mathbb{P}(V_{\omega})$ in \cite{BB11} for $n\geq 9$, and the main results on secant defectivity of $\mathcal{S}_n\subset\mathbb{P}(\Delta)$ in \cite{An11} for $n\geq 14$. 

The paper is organized as follows. In Section \ref{IsG} we recall some basic facts on Lagrangian Grassmannians and Spinor varieties describing the local parametrizations that are needed in the following sections. In Section \ref{osculating} we study higher osculating spaces of these varieties and the linear projections centered at them. Finally, in Section \ref{sec_def} we prove our main results on secant defectivity of Lagrangian Grassmannians and Spinor varieties. 

\subsection*{Acknowledgments}
The first named author would like to thank FAPERJ for the financial support. 
The second named author is a member of the Gruppo Nazionale per le Strutture Algebriche, Geometriche e le loro Applicazioni of the Istituto Nazionale di Alta Matematica "F. Severi" (GNSAGA-INDAM).

\section{Lagrangian Grassmannians and Spinor varieties}\label{IsG}
Throughout the paper we work over the field of complex numbers. Let $V$ be a complex vector space of dimension $d.$ We will denote by $\G(r,V)$ the Grassmannian parametrizing vector subspaces of dimension $r$ of $V$, in its Pl\"ucker embedding, that is the morphism induced by the determinant of the universal quotient bundle $\mathcal{Q}_{\G(r,V)}$ on $\G(r,V)$:
$$
\begin{array}{cccc}
f_{r,d}: &\G(r,V)& \longrightarrow & \P^N:=\P(\bigwedge^{r}V)\\
      & \left\langle v_1,\dots,v_r\right\rangle & \mapsto & [v_1\wedge \dots\wedge v_r]
\end{array}
$$
where $N = \binom{d}{r}-1$. Now, let $\Lambda:=\left\{ I\subset \{1,\dots,d\}, |I|=r \right\}$. For each $I=\{i_1,\dots,i_r\}\in \Lambda$, with $i_1<\dots < i_r$, let $e_I\in\G(r,V)$ be the point corresponding to $e_{i_1}\wedge\dots\wedge e_{i_r}\in\bigwedge^{r}V$. We will denote by $p_I$ the Pl\"ucker coordinates on $\mathbb{P}^N$.

Let $Q$ be a non-degenerate quadratic form on $V$. A subspace $W\subseteq V$ is isotropic with respect to $Q$ if $Q(v_1,v_2) = 0$ for any $v_1,v_2\in W$. For any $r\leq\frac{d}{2}$ there exists a $SO(V)$-equivariant projective variety $\mathcal{G}_{Q}(r,V)$ parametrizing $r$-dimensional subspaces of $V$ that are isotropic with respect to $Q$. 

Similarly, if $V$ is endowed with a non-degenerate symplectic form $\omega$, for any $r\leq\frac{d}{2}$ there exists a $Sp(V)$-equivariant projective variety $\mathcal{G}_{\omega}(r,V)$ parametrizing $r$-dimensional subspaces of $V$ that are isotropic with respect to $\omega$. Note that in this case $d$ must be even. 

The varieties $\mathcal{G}_{Q}(r,V),\mathcal{G}_{\omega}(r,V)$ are called \textit{isotropic Grassmannians}. All these varieties, except in the symmetric case when $d = 2r$, are irreducible of Picard rank one \cite[Section 2.1]{Te05}. The exceptional case $\mathcal{G}_{Q}(r,V)$ with $d = 2r$ has two irreducible components. 

In the skew-symmetric case $\bigwedge^rV$ is a reducible $Sp(V)$ representation. Indeed, it contains the irreducible submodule $\omega\wedge\bigwedge^{r-2}V\subset\bigwedge^{r}V$. The complementary submodule $V_{\omega}$ is irreducible, and the restriction of the Pl\"ucker embedding induces and embedding
$$\mathcal{G}_{\omega}(r,V)\rightarrow \mathbb{P}(V_{\omega})\subseteq \mathbb{P}(\bigwedge^rV)$$

In the symmetric case, if $d = 2m+1$ and $r < m$ or $d = 2m$ and $r<m-1$, then $\bigwedge^r V$ is an irreducible $SO(V)$-module. Then, the image of $\mathcal{G}_{Q}(r,V)$ under the Pl\"ucker embedding spans the whole of $\mathbb{P}(\bigwedge^rV)$.  

In this paper we will study two classes of isotropic Grassmannians. The \textit{Lagrangian Grassmannian} $\mathcal{LG}(n,2n)$ is the subvariety of $\G(n,V)$ parametrizing dimension $n$ Lagrangian subspaces of a complex symplectic vector space $V$ of dimension $2n$. We fix a basis of $V$ such that the symplectic form in this basis is given by
$$
J = \left(\begin{array}{cc}
0 & I_{n}\\ 
-I_{n} & 0
\end{array}\right) 
$$
Let us represent a point $W\in \LG(n,2n)$ as a $2n\times n$ matrix of the form $
\left(\begin{array}{c}
W_1\\ 
W_2
\end{array}\right) 
$. In the affine open neighborhood $\mathcal{U}_0$ of $\left(\begin{array}{c}
I_n\\ 
0
\end{array}\right) 
$ consisting of the $W$ such that $W_1$ is invertible we may write $W$ as $
\left(\begin{array}{c}
I_n\\ 
A
\end{array}\right) 
$, where $A = W_2W_1^{-1}$ is symmetric. Indeed, note that since $W$ is isotropic we have $W^{t}JW = 0$, that is $W_1^tW_2 = W_2^tW_1$ which in turn yields $(W_2W_1^{-1})^t = (W_1^{-1})^tW_1^tW_2W_1^{-1} = W_2W_1^{-1}$. Therefore, the Pl\"ucker embedding of $\G(n,V)$ restrict to an embedding
$$
\begin{array}{cccc}
\phi_{n,2n}: &\LG(n,2n)& \rightarrow & \P(V_{\omega_n})\subseteq\P(\bigwedge^{n}V)
\end{array}
$$
that is locally given by 
\stepcounter{thm}
\begin{equation}\label{embLGloc}
\begin{array}{cccc}
\phi_{n,2n|\mathcal{U}_0}: &\mathcal{U}_0\subset\LG(n,2n)& \longrightarrow & \P(V_{\omega_n})\subseteq\P(\bigwedge^{n}V)\\
      & W & \mapsto & (1,A,\bigwedge^2A,\dots, \bigwedge^nA)
\end{array}
\end{equation}
Here $\P(V_{\omega_n})$ is the linear span of $\LG(n,2n)$ in $\P(\bigwedge^{n}V)$, and $V_{\omega_n}$ can be identified with the irreducible representation of $\Sp(2n)$ with highest weight $\omega_n$, the fundamental weight associated to the last simple root $\alpha_n$. In particular, $\LG(n,2n)$ is a variety of dimension 
$$\dim(\LG(n,2n)) = \frac{n(n+1)}{2}$$
embedded in a projective space $\P(V_{\omega_n})$ of dimension
$$\dim(\P(V_{\omega_n}))= \frac{1}{2}\sum_{j=1}^n\left(\binom{n}{j}^2+\binom{n}{j}\right)$$
and $\G(n,V)\cap \P(V_{\omega_n}) = \LG(n,2n)$.

Now, let $V$ be a complex vector space of dimension $2n$, endowed with a non degenerate quadratic
form $Q$. As we said before, the variety parametrizing maximal isotropic, with respect to $Q$, subspaces of $V$ has two connected components $S_{+}$ and $S_{-}$ which are isomorphic. Their linear spans in the Pl\"ucker embedding are in direct sum, and we have a splitting $\bigwedge^nV = \bigwedge^nV_{+}\oplus \bigwedge^nV_{-}$ into spaces of the same dimension, and $S_{\pm} = \G(n,V)\cap\mathbb{P}(\bigwedge^nV_{\pm})$. 

Let $X_{Q}\subset\mathbb{P}^{2n-1}$ be the smooth quadric hypersurface associated to $Q$. Then there are two families of linear subspace of projective dimension $n-1$ contained in $X_Q$, and the varieties $S_{\pm}$ parametrize precisely these liner subspaces. The automorphism switching the two connected components of the orthogonal group $O(V)$ induces an isomorphism between $S_{+}$ and $S_{-}$. We will denote by $\mathcal{S}_n$ either of these two isomorphic varieties. Therefore, restricting the Pl\"ucker embedding we get an embedding
\stepcounter{thm}
\begin{equation}\label{embS1}
\begin{array}{cccc}
\beta_{n}: & \mathcal{S}_n & \rightarrow & \P(\bigwedge^nV_{+})\subseteq\P(\bigwedge^{n}V)
\end{array}
\end{equation}

However, the minimal embedding of $\mathcal{S}_n$ is an embedding in the projectivized half-spin representation \cite[Section 2.1]{Te05}, let us denote it by
\stepcounter{thm}
\begin{equation}\label{embS2}
\begin{array}{cccc}
\alpha_{n}: & \mathcal{S}_n & \rightarrow & \P(\Delta)
\end{array}
\end{equation}  
Now, we describe this minimal embedding in an affine chart. We fix a basis of $V$ such that the quadratic form in this basis is 
$$
Q = \left(\begin{array}{cc}
0 & I_{n}\\ 
I_{n} & 0
\end{array}\right) 
$$
and represent a point $U\in \mathcal{S}_n$ as a $2n\times n$ matrices of the form $
\left(\begin{array}{c}
U_1\\ 
U_2
\end{array}\right) 
$. In the affine open neighborhood $\mathcal{U}_0$ of $\left(\begin{array}{c}
I_n\\ 
0
\end{array}\right) 
$ consisting of the $U$ such that $U_1$ is invertible we may write $U$ as $
\left(\begin{array}{c}
I_n\\ 
B
\end{array}\right) 
$, where $B = U_2U_1^{-1}$ is skew-symmetric. Indeed, in this case $U^tQU = 0$ implies that $U_1^tU_2 = -U_2^tU_1$, which in turn yields $(U_2U_1^{-1})^t = -(U_1^{-1})^tU_1^tU_2U_1^{-1} = -U_2U_1^{-1}$. The minimal embedding $\alpha_n$ is given in this chart by 
\stepcounter{thm}
\begin{equation}\label{embS2loc}
\begin{array}{cccc}
\alpha_{n|\mathcal{U}_0}: & \mathcal{U}_0\subset\mathcal{S}_n & \longrightarrow & \P(\Delta)\\
& U & \mapsto & (1,\pf_{2j}(B))
\end{array}
\end{equation}
where $\pf_{2j}(B)$ denotes all the $2j\times 2j$ principal Pfaffians of $B$, and $j = 1,\dots,\frac{n}{2}$ if $n$ is even, while $j = 1,\dots,\frac{n-1}{2}$ if $n$ is odd. In particular $\mathcal{S}_n$ is a projective variety of dimension
$$\dim(\mathcal{S}_n) = \frac{n(n-1)}{2}$$ 
embedded in a projective space $\P(\bigwedge^nV_{+})$ of dimension 
$$\dim(\P(\bigwedge^nV_{+})) = \frac{1}{2}\binom{2n}{n}-1$$
via the embedding $\beta_n$ in (\ref{embS1}), and in a projective space $\P(\Delta)$ of dimension 
$$\dim(\P(\Delta)) = 2^{n-1}-1$$
via the minimal embedding $\alpha_n$ in (\ref{embS2}). Note that $\beta_n$ can be obtained composing $\alpha_n$ with a degree two Veronese embedding.
     
\section{Higher osculating spaces and projections}\label{osculating}
Let $X\subset \P^N$ be an integral projective variety of dimension $n$, $p\in X$ a smooth point, and 
$$
\begin{array}{cccc}
\phi: &\mathcal{U}\subseteq\mathbb{C}^n& \longrightarrow & \mathbb{C}^{N}\\
      & (t_1,\dots,t_n) & \mapsto & \phi(t_1,\dots,t_n)
\end{array}
$$
with $\phi(0)=p$, be a local parametrization of $X$ in a neighborhood of $p\in X$. 

For any $s\geq 0$ let $O^s_pX$ be the affine subspace of $\mathbb{C}^{N}$ passing through $p\in X$, and whose direction is given by the subspace generated by the vectors $\phi_I(0)$, where $I = (i_1,\dots,i_r)$ is a multi-index such that $|I|\leq s$ and 
$$
\phi_I = \frac{\partial^{|I|}\phi}{\partial t_1^{i_1}\dots\partial t_r^{i_r}}
$$

\begin{Definition}\label{oscdef}
The $s$-\textit{osculating space} $T_p^s X$ of $X$ at $p$ is the projective closure in $\mathbb{P}^N$ of the affine subspace $O^s_pX\subseteq \mathbb{C}^{N}$.
\end{Definition}

For instance, $T_p^0 X=\{p\}$, and $T_p^1 X$ is the usual tangent space of $X$ at $p$. When no confusion arises we will write $T_p^s$ instead of $T_p^sX$.

Note that while the dimension of the tangent space at a smooth point is always equal to the dimension of the variety, higher order osculating spaces can be strictly smaller than expected even at a general point. In general, we have
$$
\dim(T_p^s X) = \min\left\{\binom{n+s}{n}-1-\delta_{s,p},N\right\}
$$
where $\delta_{s,p}$ is the number of independent differential equations of order less or equal than $s$ satisfied by $X$ at $p$.

We will be interested in relating the osculating spaces of a projective variety to those of its linear sections. It is well-known that the tangent space of a transverse linear section of a variety at a smooth point is cut out in the tangent space of the variety by the linear space we used to intercept
the linear section. This is not always the case for higher order osculating spaces.

\begin{Definition}\label{Wellbehaved}
Let $X\subset\mathbb{P}^N$ be an irreducible variety and $Y=\mathbb{P}^k\cap X$ be a linear section of $X$. We say that $Y$ is \textit{osculating well-behaved} if for each smooth point $p\in Y$ we have 
$$T_p^sY=\mathbb{P}^k\cap T_p^sX$$ 
for every $s\geq 0$. 
\end{Definition}

In the following example we give a variety and a linear section of it that is not osculating well behaved.

\begin{Example}
In the projective space $\mathbb{P}^{k+2}$ consider two complementary subspaces $\mathbb{P}^1,\mathbb{P}^k$, and let $C\subset\mathbb{P}^k$ be a degree $k$ rational normal curve. Fixed an isomorphism $\psi:\mathbb{P}^1\rightarrow C$ we consider the rational normal scroll

$$S_{(1,k)} = \bigcup_{p\:\in\: \mathbb{P}^1}\left\langle p, \psi(p)\right\rangle\subset \mathbb{P}^{k+2}$$
where $\left\langle p, \psi(p)\right\rangle$ is the line through $p$ and $\psi(p)$. Then $S_{(1,k)}$ can be locally parametrized by the map
$$
\begin{array}{cccc}
\phi: & \mathbb{A}^1\times\mathbb{P}^1 & \longrightarrow & \mathbb{P}^{k+2}\\ 
 & (u,[\alpha_0:\alpha_1]) & \mapsto & [\alpha_0 u:\alpha_0:\alpha_1 u^k:\alpha_1 u^{k-1}:\dots:\alpha_1 u:\alpha_1].
\end{array} 
$$ 
Now, consider the Segre embedding 
$$
\begin{array}{cccc}
\sigma: & \mathbb{P}^1\times\mathbb{P}^k & \longrightarrow & \mathbb{P}^{2k+1}\\ 
 & ([u:v],[\alpha_0:\dots:\alpha_k]) & \mapsto & [\alpha_0u:\dots:\alpha_ku:\alpha_0v:\dots :\alpha_kv].
\end{array} 
$$ 
and let $\Sigma_{(1,k)}$ be its image. Note that $\Sigma_{(1,k)}$ is locally parametrized by
$$
\begin{array}{cccc}
\widetilde{\sigma}: & \mathbb{A}^1\times\mathbb{P}^k & \longrightarrow & \mathbb{P}^{2k+1}\\ 
 & ([u:1],[\alpha_0:\dots:\alpha_k]) & \mapsto & [\alpha_0u:\dots:\alpha_ku:\alpha_0:\dots :\alpha_k].
\end{array} 
$$ 
and that $\deg(\Sigma_{(1,k)}) = \deg(S_{(1,k)}) = k+1$. Now, take $\alpha_{i} = \alpha_1u^{i-1}$ for $i=2,\dots,k$. Then
$$\widetilde{\sigma}(u,[\alpha_0:\alpha_1:\dots :\alpha_1u^{k-1}]) = [\alpha_0u:\alpha_1u:\dots:\alpha_1u^{k-1}:\alpha_1u^k:\alpha_0:\alpha_1:\dots\alpha_1u^{k-1}]$$
and the coordinate functions of this last map are exactly the ones appearing in the expression of $\phi$. Therefore, if $[Z_0:\dots:Z_{2k+1}]$ are the homogeneous coordinates on $\mathbb{P}^{2k+1}$ and 
$$H^{k+2} = \{Z_j-Z_{k+j+2}=0,\: j = 1,\dots,k-1\}\cong\mathbb{P}^{k+2}$$
then we have 
$$S_{(1,k)} = \Sigma_{(1,k)}\cap H^{k+2}\subset\mathbb{P}^{2k+1}$$
By \cite[Example 4.12]{AMR17} we have that if $p\in S_{(1,k)}$ is a general point and $k\geq 2$ then $\dim(T_p^2S_{(1,k)}) = 4$. On the other hand, \cite[Corollary 2.6]{AMR17} yields $T_p^2\Sigma_{(1,k)} = \mathbb{P}^{2k+1}$. We conclude that
$$T_p^2S_{(1,k)}\subsetneqq T_p^2\Sigma_{(1,k)} \cap H^{k+2} = H^{k+2}$$
for all $k\geq 3$.
\end{Example}
    
\stepcounter{thm}
\subsection{Osculating spaces of Lagrangian Grassmannians}\label{sec_LG}
Let $\Lambda:=\{I\subset\{1,\ldots,2n\}\: ; \:|I|=n\}$, and given $I,J\in \Lambda$ define the distance between $I$ and $J$ as $d(I,J)=|I|-|I\cap J|$, which is also know as Hamming distance. We rewrite the parametrization (\ref{embLGloc}) as follows:
\stepcounter{thm}
\begin{equation}\label{par_LG}
\phi:\C^{\frac{n(n+1)}{2}}
\rightarrow\mathcal{LG}(n,2n)
\end{equation}
given by
$$
M=\left(
  \begin{array}{cccccc}
    1 & \cdots & 0 & a_{1,1} & \cdots & a_{1,n}\\
    \vdots & \ddots & \vdots &\vdots & \ddots& \vdots\\
    0 & \cdots & 1 &a_{1,n} & \cdots & a_{n,n}\\
      \end{array}
\right) \mapsto (\det(M_J))_{J\in\Lambda}
$$
where $M_J$ is the $n\times n$ matrix obtained from $M$ considering just the columns indexed by $J$.

Fix $J\in \Lambda$, and let $\sigma_J\in S_I$ be the permutation which changes the rows of $M_J$ so that the new matrix is of the form
\stepcounter{thm}
\begin{equation}\label{Minor}
\left(
    \begin{array}{cc}
      I_r & B \\
       0 & \overline{M}_J  \\
    \end{array}
  \right)
\end{equation}
where $r=|I_0\cap J|, I_0:=\{1,\dots,n\},$ and $\overline{M}_J $ is the submatrix  of $(a_{ij})_{i,j=1}^n$ given by the rows indexed by $I_0\setminus J$ and the columns indexed by $(J\setminus I_0)-n$. Note that $\det(M_J)=\epsilon(\sigma_J)\det\overline{M}_J$, where $\epsilon$ is the group homomorphism associating to a permutation its sign.

Now, note that if $J\in\Lambda$ and $J'\in \Lambda$ is given by
\stepcounter{thm}
\begin{equation}\label{J'}
J'=((I_0\setminus J)+n)\cup ((I_0^c\setminus J)-n)
\end{equation}
we have that $d(I_0,J)=d(I_0,J')=n-r$. Moreover $\det\overline{M}_J=\det\overline{M}_{J'}$, and thus $\epsilon(\sigma_J)\det(M_J)=\epsilon(\sigma_{J'})\det(M_{J'})$. We will denote by $\Sigma$ the subset of $\Lambda \times \Lambda$ given by
\stepcounter{thm}
\begin{equation}\label{bigsigma}
\Sigma:=\{(J,J')\in\Lambda \times \Lambda \: | \: J'=((I_0\setminus J)+n)\cup ((I_0^c\setminus J)-n)\}
\end{equation}
Furthermore, we define $\Sigma_s:=\{(J,J')\in \Sigma \: | \: d(I_0,J)\leq s\}$.

\begin{Proposition}\label{PropOscLG} 
For any $s\geq 0$ and $I\in \Lambda$ we have
$$
T_{e_I}^s\mathcal{LG}(n,2n) = \langle \epsilon(\sigma_J)e_J+\epsilon(\sigma_{J'})e_{J'} \mid (J,J')\in\Sigma_s\rangle = T_{e_I}^s\mathcal{G}(n,V)\cap \mathbb{P}(V_{\omega})
$$
In particular, $T_{e_I}^s\mathcal{LG}(n,2n) = \mathbb{P}(V_{\omega})$ for any $s\geq n$, and $\mathcal{LG}(n,2n) = \mathcal{G}(n,V)\cap \mathbb{P}(V_{\omega}) \subset \mathbb{P}(\bigwedge^n\mathbb{C}^{2n})$ is osculating well-behaved.
\end{Proposition}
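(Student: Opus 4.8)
The plan is to compute the osculating spaces at the base point $e_{I_0}$, $I_0=\{1,\dots,n\}$, directly from the parametrization (\ref{par_LG}), and then to deduce the statement for an arbitrary coordinate point by relabelling the symplectic basis: the Weyl group of $\Sp(2n)$ permutes the torus-fixed points of $\mathcal{LG}(n,2n)$ transitively and is induced by $\Sp(2n)$, so it suffices to treat $e_{I_0}$.

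First I would isolate the two structural features of the coordinate functions $p_J=\epsilon(\sigma_J)\det\overline{M}_J$. Since $\overline{M}_J$ is a size $d(I_0,J)$ minor of the symmetric matrix $A=(a_{ij})$, the function $p_J$ is homogeneous of degree $d(I_0,J)$ in the $a_{ij}$; hence every osculating vector $\phi_K(0)$ is \emph{graded}, being supported only on those $J$ with $d(I_0,J)=|K|$. Second, the symmetry $a_{ij}=a_{ji}$ gives $\overline{M}_{J'}=\overline{M}_J^{t}$, so $\det\overline{M}_J=\det\overline{M}_{J'}$ as polynomials; consequently the $J$- and $J'$-components of any $\phi_K(0)$ agree up to the signs $\epsilon(\sigma_J),\epsilon(\sigma_{J'})$, so $\phi_K(0)$ lies in the span of the vectors $v_J:=\epsilon(\sigma_J)e_J+\epsilon(\sigma_{J'})e_{J'}$, $(J,J')\in\Sigma$. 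Setting $W_d:=\langle v_J\mid d(I_0,J)=d\rangle$, these two remarks give at once that $\phi_K(0)\in W_{|K|}$, hence the inclusion $T_{e_{I_0}}^s\mathcal{LG}(n,2n)\subseteq\bigoplus_{d\le s}W_d=\langle v_J\mid (J,J')\in\Sigma_s\rangle$; they also show $T_{e_{I_0}}^s\mathcal{LG}(n,2n)\subseteq T_{e_{I_0}}^s\mathcal{G}(n,V)$, because the symmetric parametrization is the restriction of the Grassmannian parametrization along the linear inclusion of symmetric matrices, so each osculating vector of $\mathcal{LG}(n,2n)$ is a linear combination of osculating vectors of $\mathcal{G}(n,V)$ of the same order.

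The reverse inclusion is the crux, and here the plan is to use the Levi factor $L\cong GL(n)$ of the parabolic stabilizing $e_{I_0}$: since $L$ fixes $e_{I_0}$ and preserves $\mathcal{LG}(n,2n)$, it preserves each osculating space, and since it also preserves the distance grading it preserves each $W_d$. Because the osculating vectors are graded, $T_{e_{I_0}}^s\mathcal{LG}(n,2n)=\bigoplus_{d\le s}U_d$ with $U_d:=\langle\phi_K(0)\mid |K|=d\rangle$ an $L$-submodule of $W_d$. Now $U_d\neq 0$ for every $d\le n$, because the degree $d$ part of (\ref{par_LG}) is a nonzero collection of size $d$ minors; and $W_d$, being the span of the size $d$ minors of a symmetric matrix, is an \emph{irreducible} $GL(n)$-module (the Schur module attached to the partition $(2^d)$). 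Hence $U_d=W_d$ for all $d\le s$, giving $T_{e_{I_0}}^s\mathcal{LG}(n,2n)=\bigoplus_{d\le s}W_d=\langle v_J\mid (J,J')\in\Sigma_s\rangle$. I expect the irreducibility of $W_d$ to be the one genuinely nontrivial point: a naive leading term argument cannot prove the underlying linear independence of the distinct size $d$ symmetric minors, since for instance the pairs $(\{1,4\},\{2,3\})$ and $(\{1,3\},\{2,4\})$ share the diagonal monomial $a_{12}a_{34}$, so this step must be handled representation theoretically rather than by a direct monomial computation.

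Finally, the second equality and the remaining assertions follow formally. The Grassmannian computation of \cite{MR19} gives $T_{e_{I_0}}^s\mathcal{G}(n,V)=\langle e_J\mid d(I_0,J)\le s\rangle$, while the decomposition $\mathbb{P}(V_{\omega})=\langle v_J\mid (J,J')\in\Sigma\rangle=\bigoplus_d W_d$ identifies $V_{\omega}$ with the span of the symmetric combinations; since $d(I_0,J)=d(I_0,J')$ the vectors $v_J$ are distance homogeneous, so intersecting the coordinate subspace with $\mathbb{P}(V_{\omega})$ yields $T_{e_{I_0}}^s\mathcal{G}(n,V)\cap\mathbb{P}(V_{\omega})=\bigoplus_{d\le s}W_d=T_{e_{I_0}}^s\mathcal{LG}(n,2n)$. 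As the Hamming distance never exceeds $n$, taking $s\ge n$ gives all of $\bigoplus_d W_d=\mathbb{P}(V_{\omega})$. Transporting these identities to an arbitrary coordinate point and observing that they hold for every $s\ge0$ is precisely the statement that $\mathcal{LG}(n,2n)=\mathcal{G}(n,V)\cap\mathbb{P}(V_{\omega})$ is osculating well-behaved in the sense of Definition \ref{Wellbehaved}.
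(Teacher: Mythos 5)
Your reduction to the base point, the graded ``easy'' inclusion $T_{e_{I_0}}^s\mathcal{LG}(n,2n)\subseteq\bigoplus_{d\le s}W_d$, and the Levi-equivariance framework are all sound, but the proof collapses at its declared crux: the assertion that $W_d=\langle v_J\mid d(I_0,J)=d\rangle$ is the irreducible Schur module $S_{(2^d)}\mathbb{C}^n$. This is false for $d\ge 2$ and $n\ge 4$. Since the vectors $v_J$ have pairwise disjoint supports, $\dim W_d=\frac{1}{2}\binom{n}{d}\left(\binom{n}{d}+1\right)=\dim \mathrm{Sym}^2\bigl(\bigwedge^d\mathbb{C}^n\bigr)$, and under your own identification $W_d\cong \mathrm{Sym}^2\bigl(\bigwedge^d U^*\bigr)\otimes\det U$, which is reducible: already $\mathrm{Sym}^2\bigl(\bigwedge^2\mathbb{C}^4\bigr)\cong S_{(2,2)}\mathbb{C}^4\oplus\bigwedge^4\mathbb{C}^4$, of dimension $21=20+1$. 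Equivalently, the distinct $d\times d$ minors of a symmetric matrix are \emph{not} linearly independent: for a symmetric $4\times 4$ matrix $A$ one has the identity $\det A_{\{1,2\},\{3,4\}}-\det A_{\{1,3\},\{2,4\}}+\det A_{\{1,4\},\{2,3\}}=0$, so the $21$ distinct $2\times 2$ minors span only the $20$-dimensional module $S_{(2,2)}\mathbb{C}^4$. The span of the minors is the irreducible Schur module; $W_d$ is strictly bigger. Hence ``nonzero $L$-submodule of an irreducible module'' is unavailable, and in fact your own duality remark (the degree-$d$ derivative vectors span a space whose dimension equals that of the span of the degree-$d$ coordinate functions, by equality of row and column rank of the matrix of $d$-th partials) shows $U_d\cong S_{(2^d)}\mathbb{C}^n\subsetneq W_d$ in that range. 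So the gap is not repairable: the equality $U_d=W_d$ you need is false.

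What your (corrected) framework actually detects is that the first equality of Proposition \ref{PropOscLG} is itself false for $n\ge 4$, so no argument can recover it; and the paper's own proof commits the mirror image of your mistake. The paper claims that $\frac{\partial^m\phi}{\partial a_{k_1,k_1'}\cdots\partial a_{k_m,k_m'}}(0)$ is supported on the single pair $(J,J')$ with $J=(K'+n)\cup(I\setminus K)$, but this ignores the other row/column splittings of the variables: for instance $\frac{\partial^2\phi}{\partial a_{1,2}\partial a_{3,4}}(0)$ has nonzero coordinates on \emph{both} pairs corresponding to rows/columns $(\{1,3\},\{2,4\})$ and $(\{1,4\},\{2,3\})$, because both minors contain the monomial $a_{12}a_{34}$. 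With the correct count, the degree-$d$ graded piece of the osculating space is $S_{(2^d)}\mathbb{C}^n$, so for example $T^2_{e_I}\mathcal{LG}(4,8)$ has projective dimension $30$, not $31$ as in Corollary \ref{dimOscLG}, and $\mathcal{LG}(4,8)$ spans $\mathbb{P}^{41}=\mathbb{P}(V_{\omega_4})$ (for $Sp(8)$ one has $\dim V_{\omega_4}=\binom{8}{4}-\binom{8}{2}=42$), not $\mathbb{P}^{42}$ as stated in the paper. What survives, and what your Levi argument proves once $W_d$ is replaced by its actual decomposition, is exactly the part of the statement used in the rest of the paper: $U_d$ and $(V_{\omega_n})_d$ are both the copy of $S_{(2^d)}\mathbb{C}^n$ of the correct dimension, whence $T^s_{e_I}\mathcal{LG}(n,2n)=T^s_{e_I}\mathcal{G}(n,V)\cap\mathbb{P}(V_{\omega})$ for all $s$, $T^s_{e_I}\mathcal{LG}(n,2n)=\mathbb{P}(V_\omega)$ for $s\ge n$, and the linear section $\mathcal{LG}(n,2n)=\mathcal{G}(n,V)\cap\mathbb{P}(V_\omega)$ is osculating well-behaved --- but with $\mathbb{P}(V_\omega)=\langle\mathcal{LG}(n,2n)\rangle$ strictly contained in $\langle \epsilon(\sigma_J)e_J+\epsilon(\sigma_{J'})e_{J'}\mid (J,J')\in\Sigma\rangle$, and with the explicit spanning set and dimension formula of the Proposition being too big for $n\ge 4$.
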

\begin{proof}
We may assume that $I=\{1,\ldots,n\}\in \Lambda$ and use the parametrization \ref{par_LG}. First, note that each variable appears in degree at most two in the coordinates of $\phi$. Therefore, deriving three times with respect to the same variable always gives zero. Furthermore, since the degree of $\det(M_J)$ with respect to the $a_{i,j}$'s is at most $n$, all partial derivatives of order greater than or equal to $n+1$ are zero. Then it is sufficient to take $s\leq n$. Note also that $\det(M_J)$ is a homogeneous polynomial of degree $m=d(I,J)$ in the variables $a_{i,j}$, its partial derivatives of order $s$ are zero if $s>m$ and a homogeneous polynomial of degree $m-s$ otherwise. Therefore, the partial derivatives of $\det(M_J)$ evaluated at zero vanish for $s\neq m$.

Let us fix $J=\{j_1,\ldots,j_n\}\in \Lambda$ and take $k,k'\in\{1,\ldots,n\}$. We will compute $\frac{\partial \det(M_J)}{\partial a_{k,k'}}$. Remember that the variable $a_{k,k'}$ appears at most two times in the entries of $M_J$.

In order to compute $\frac{\partial \det(M_J)}{\partial a_{k,k'}}$ when $a_{k,k'}$ appears in the entries of $M_J$ we will use the Laplace expansion of $\det(M_J)$ with respect the $k$-th row. If we denote the entries of $M$ by $m_{i,j}$, then the entries of $M_J$ are given by $m_{i,j_l}$ with $l=1,\dots,n$. Therefore, the variable $a_{k,k'}$ appears in the entries of $M_J$ if and only if $a_{k,k'}=m_{k,k'+n}$, in this case we set $k'+n=j_l\in J$. If $a_{k,k'}$ appears at least once in the entries of $M_J$, then the Laplace expansion of $\det(M_J)$ is given by
\stepcounter{thm}
\begin{equation}\label{Lapl_Exp}
\begin{array}{ccl}
\det(M_J)&=&\sum_{i=1}^n(-1)^{k+i}m_{k,j_i}\det(M_{J,\widehat{k},\widehat{j}_i})\\
&=&(-1)^{k+l}m_{k,j_l}\det(M_{J,\widehat{k},\widehat{j}_l})+\sum_{i=1;i\neq l}^n(-1)^{k+i}m_{k,j_i}\det(M_{J,\widehat{k},\widehat{j}_i})\\
&=&(-1)^{k+l}a_{k,k'}\det(M_{J,\widehat{k},\widehat{k'+n}})+\sum_{i=1;i\neq l}^n(-1)^{k+i}m_{k,j_i}\det(M_{J,\widehat{k},\widehat{j}_i})
\end{array}
\end{equation}
where $M_{J,\widehat{k},\widehat{j}_i}$ denotes the submatrix of $M_J$ obtained deleting the row indexed by $k$ and the column indexed by $j_i.$

Now, we derive the equation (\ref{Lapl_Exp}) with respect to $a_{k,k'}$. If the variable $a_{k,k'}$ appears just once in the entries of $M_J$, then the minors $M_{J,\widehat{k},\widehat{j}_i}$, with $i\neq l$, do not depend on $a_{k,k'}$. Hence, in this case we have
$$
\frac{\partial \det(M_J)}{\partial a_{k,k'}} = (-1)^{k+l}\det(M_{J,\widehat{k},\widehat{k'+n}})\frac{\partial a_{k,k'}}{\partial a_{k,k'}} = (-1)^{k+l}\det(M_{J,\widehat{k},\widehat{k'+n}})
$$
Note that $\det(M_{J,\widehat{k},\widehat{k'+n}})$ vanishes when the determinant of $M_J$ does not depend on $a_{k,k'}$. Now, suppose that $a_{k,k'}$ appears twice in the entries of $M_J$, in this case $a_{k,k'}$ appears in all minors $M_{J,\widehat{k},\widehat{j}_l}$, with $i\neq l$, except $M_{J,\widehat{k},\widehat{k+n}}$. Then using the notation $k+n=j_r\in J$ from (\ref{Lapl_Exp}) we get that $\det(M_J) = (-1)^{k+l}a_{k,k'}\det(M_{J,\widehat{k},\widehat{k'+n}})+(-1)^{k+r}a_{k,k}\det(M_{J,\widehat{k},\widehat{k+n}})+\sum_{i=1;i\neq l,r}^n(-1)^{k+i}m_{k,j_i}\det(M_{J,\widehat{k},\widehat{j}_i})$.

Now, note that using the Laplace expansion of $\det(M_{J,\widehat{k},\widehat{j}_i})$ with respect the $k'$-th row, the partial derivative of $\det(M_{J,\widehat{k},\widehat{j}_i})$, with $i\neq l,r,$ with respect to $ a_{k,k'}$ can be written as $\frac{\partial\det(M_{J,\widehat{k},\widehat{j}_i})}{\partial a_{k,k'}}=(-1)^{k'+r}\det(M_{J,\widehat{k},\widehat{k'},\widehat{j}_i,\widehat{k+n}})$, where $M_{J,\widehat{k},\widehat{k'},\widehat{j}_i,\widehat{k+n}}$ denotes the submatrix of $M_J$ obtained deleting the rows $k$ and $k'$ and the columns $j_i$ and $k+n$. Therefore,
$$
\begin{array}{ccl}
\frac{\partial \det(M_J)}{\partial a_{k,k'}}&=&(-1)^{k+l}\frac{\partial}{\partial a_{k,k'}}a_{k,k'}\det(M_{J,\widehat{k},\widehat{k'+n}})+(-1)^{k+r}\frac{\partial}{\partial a_{k,k'}}a_{k,k}\det(M_{J,\widehat{k},\widehat{k+n}})\\
 && +\sum_{i=1;i\neq l,r}^n(-1)^{k+i}m_{k,j_i}\frac{\partial \det(M_{J,\widehat{k},\widehat{j}_i})}{\partial a_{k,k'}}\\
 &=&(-1)^{k+l}\det(M_{J,\widehat{k},\widehat{k'+n}})+(-1)^{k+l}a_{k,k'}\frac{\partial\det(M_{J,\widehat{k},\widehat{k'+n}})}{\partial a_{k,k'}}+0\\
  &&+\sum_{i=1;i\neq l,r}^n(-1)^{k+i}m_{k,j_i}\frac{\partial\det(M_{J,\widehat{k},\widehat{j}_i})}{\partial a_{k,k'}}\\
  &=&(-1)^{k+l}\det(M_{J,\widehat{k},\widehat{k'+n}})+\sum_{i=1;i\neq r}^n(-1)^{k+i}m_{k,j_i}\frac{\partial \det(M_{J,\widehat{k},\widehat{j}_i})}{\partial a_{k,k'}}\\
  &=&(-1)^{k+l}\det(M_{J,\widehat{k},\widehat{k'+n}})+\sum_{i=1;i\neq r}^n(-1)^{k+i}m_{k,j_i}(-1)^{k'+r}\det(M_{J,\widehat{k},\widehat{k'},\widehat{j}_i,\widehat{k+n}})\\
   &=&(-1)^{k+l}\det(M_{J,\widehat{k},\widehat{k'+n}})+(-1)^{k'+r}\sum_{i=1;i\neq r}^n(-1)^{k+i}m_{k,j_i}\det(M_{J,\widehat{k},\widehat{k'},\widehat{j}_i,\widehat{k+n}})\\
   &=&(-1)^{k+l}\det(M_{J,\widehat{k},\widehat{k'+n}})+(-1)^{k'+r}\det(M_{J,\widehat{k'},\widehat{k+n}})
\end{array}
$$ 
Again, the minors $\det(M_{J,\widehat{k},\widehat{k'+n}})$ and $\det(M_{J,\widehat{k'},\widehat{k+n}})$ may vanish.

Now, consider the derivatives of order $m>1$, set $\alpha=\{\alpha_1,\ldots,\alpha_m\}\subset I$, $\beta=\{\beta_1,\ldots,\beta_m\}\subset I$, and define 
$$M_{J,\widehat{\alpha},\widehat{\beta}}=\left\{\begin{array}{cl}
M_{J,\widehat{\alpha}_1,\ldots,\widehat{\alpha}_m,\widehat{j}_{\gamma_1},\ldots,\widehat{j}_{\gamma_m}} & \text{ if  } \alpha\subset I\setminus J,\beta_i+n=j_{\gamma_i}\in J, \: |\alpha| = |\beta| = m\\
0 & \text{ otherwise}
\end{array}\right.$$

Now, take $K = \{k_1,\ldots,k_m\}\subset I$, $K'= \{k_1',\ldots,k_m'\}\subset I$, and for each $\alpha=\{\alpha_1,\ldots,\alpha_m\}\subset K\cup K'$ with $\alpha_i\in\{k_i,k_i'\}$ define $\alpha^*=\{\alpha_1^*,\ldots,\alpha_m^*\}$, where $\{\alpha_i,\alpha_i^*\}=\{k_i,k_i'\}$, and
$$\Delta_J=\{\alpha\: |\: \det(M_{J,\widehat{\alpha},\widehat{\alpha}^*})\neq 0\}$$ 
Therefore
$$\frac{\partial^m\det(M_J)}{\partial a_{k_1,k_1'}\dots\partial a_{k_m,k_m'}}=                                
\sum_{\alpha\in\Delta_J}(-1)^{\sum\alpha_i+ \gamma_i}\det(M_{J,\widehat{\alpha},\widehat{\alpha}^*})$$
where $\gamma_i$ is such that $\alpha_i^*+n=j_{\gamma_i}\in J$. Remember that this partial derivative evaluated at zero vanish when $d(I,J)\neq m$. In this case the matrix $M_{J,\widehat{\alpha},\widehat{\alpha}^*}$ is obtained deleting the rows indexed by $I\setminus J$ and the columns indexed by $J\setminus I$, hence $M_{J,\widehat{\alpha},\widehat{\alpha}^*}=I_{n-m}$. Therefore, for all $\alpha\in\Delta_J$ we have
$$\sum_{i=1}^m\gamma_i=(n-m+1)+\cdots+(n-1)+n \mbox{ and } \sum_{i=1}^m\alpha_i=\sum_{i\in I\setminus J}i$$
and we conclude that $\sum_{i=1}^m\alpha_i+\gamma_i$ does not depend on $\alpha\in \Delta_J$. Now, setting $c_J=|\Delta_J|$ we have
$$\frac{\partial^m\det(M_J)}{\partial a_{k_1,k_1'}\dots\partial a_{k_m,k_m'}}(0)=
\left\{\begin{array}{cl}
\pm c_J&\text{if  } K'+n\cup (I\setminus K)=J\\
  0 & \text{otherwise}
\end{array}\right.
$$
Note that $K'+n\cup (I\setminus K)=J$ if and only if $K+n\cup (I\setminus K')=J'$, and in this case $(J,J')\in \Sigma_m$. Hence, we conclude that
$$
\frac{\partial^m\phi}{\partial a_{k_1,k_1'}\dots\partial a_{k_m,k_m'}}(0)=
\left\{\begin{array}{cl}
\epsilon(\sigma_J)e_J+\epsilon(\sigma_{J'})e_{J'} &\text{if  } K'+n\cup (I\setminus K)=J\\
  0 & \text{otherwise}
\end{array}\right.
$$
and thus 
$$\left\langle\frac{\partial^m\phi}{\partial a_{K,K'}}(0)\mid K,K'\subseteq I, |K| = |K'|=m\right\rangle = \langle\epsilon(\sigma_J)e_J+\epsilon(\sigma_{J'})e_{J'}\mid (J,J')\in\Sigma_m\setminus\Sigma_{m-1}\rangle$$
Therefore
\stepcounter{thm}
\begin{equation}\label{osc}
T_{e_I}^s\mathcal{LG}(n,2n) = \left\langle\frac{\partial^m\phi}{\partial a_{K,K'}}(0) \mid 0\leq m\leq s\right\rangle = \langle \epsilon(\sigma_J)e_J+\epsilon(\sigma_{J'})e_{J'}\mid (J,J')\in\Sigma_s\rangle 
\end{equation}
Now, set
\stepcounter{thm}
\begin{equation}\label{sym}
\mathbb{P}^{N-M}:=\{\epsilon(\sigma_J)P_J-\epsilon(\sigma_{J'})P_{J'}=0\mid(J,J')\in \Sigma \text{ and } J\neq J'\}\subseteq\mathbb{P}^N
\end{equation}
where $M=\frac{1}{2}\sum_{k=1}^n\binom{n}{k}^2-\binom{n}{k}$. Now, (\ref{osc}) and (\ref{sym}) yield that 
$$
T_{e_I}^s\mathcal{LG}(n,2n) = \{P_J=0\mid J\in \Lambda\text{ and } d(I,J)>s\}\cap\mathbb{P}^{N-M} = T_{e_I}^s\mathcal{G}(n,V)\cap\mathbb{P}^{N-M}
$$
Finally, we get that
$$T_{e_I}^s\mathcal{LG}(n,2n)\subset T_{e_I}^s\mathcal{G}(n,V)\cap \mathbb{P}(V_{\omega})\subset T_{e_I}^S\mathcal{G}(n,V)\cap\mathbb{P}^{N-M}=T_{e_I}^s\mathcal{LG}(n,2n)$$
and hence $T_{e_I}^s\mathcal{LG}(n,2n) = T_{e_I}^s\mathcal{G}(n,V)\cap \mathbb{P}(V_{\omega})$.
\end{proof}

\begin{Corollary}\label{dimOscLG}
For any $p\in \mathcal{LG}(n,2n)\subseteq\mathbb{P}(V_{\omega_n})$ we have 
$$\dim(T_{p}^s\mathcal{LG}(n,2n)) = \frac{1}{2}\sum_{k=1}^{s}\binom{n}{k}\left(\binom{n}{k}+1\right)$$
for $1\leq s\leq n-1$ while $T_{p}^s\mathcal{LG}(n,2n) = \mathbb{P}(V_{\omega})$ for $s\geq n$.
\end{Corollary}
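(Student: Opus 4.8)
The plan is to leverage homogeneity together with the explicit description of $T_{e_I}^s\mathcal{LG}(n,2n)$ supplied by Proposition \ref{PropOscLG}, turning the dimension computation into a purely combinatorial count of linearly independent generators. Since $\Sp(2n)$ acts transitively on $\mathcal{LG}(n,2n)$ by projective linear transformations of $\mathbb{P}(V_{\omega})$, the $s$-osculating dimension is the same at every point, so it suffices to compute at $p=e_{I_0}$ with $I_0=\{1,\dots,n\}$. There Proposition \ref{PropOscLG} gives $T_{e_{I_0}}^s\mathcal{LG}(n,2n)=\langle\,\epsilon(\sigma_J)e_J+\epsilon(\sigma_{J'})e_{J'}\mid (J,J')\in\Sigma_s\rangle$, and the task reduces to finding the dimension of this span.

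I would organize the generators according to the distance $m=d(I_0,J)$, using that $d(I_0,J')=d(I_0,J)=m$, so the generator indexed by $(J,J')$ is supported on basis vectors $e_K$ with $d(I_0,K)=m$. As the index sets $\{K:d(I_0,K)=m\}$ are pairwise disjoint for distinct $m$, the generators split into ``shells'' with disjoint supports, and the count may be carried out shell by shell and then summed. Within the shell at distance $m$, the assignment $J\mapsto J'$ of (\ref{J'}) is an involution, and the ordered pairs $(J,J')$ and $(J',J)$ produce the same vector; hence the distinct generators are indexed by the orbits of this involution. Each orbit yields exactly one nonzero vector: a fixed point $J=J'$ gives $2\epsilon(\sigma_J)e_J$, supported on $e_J$, while a genuine pair gives $\epsilon(\sigma_J)e_J+\epsilon(\sigma_{J'})e_{J'}$, supported on $\{e_J,e_{J'}\}$. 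Since every $e_K$ in the shell lies in exactly one orbit, these supports are pairwise disjoint, whence the generators are linearly independent and the shell contributes $\tfrac{1}{2}(N_m+f_m)$ to the dimension, where $N_m$ counts the $J$ at distance $m$ and $f_m$ counts the fixed points.

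The combinatorial heart is then to evaluate $N_m$ and $f_m$. Writing $J=A\cup B$ with $A=J\cap I_0$ and $B=J\cap I_0^c$ forces $|A|=n-m$ and $|B|=m$, giving $N_m=\binom{n}{m}^2$. Identifying $I_0$ with $I_0^c$ via $i\mapsto i+n$, the condition $J=J'$ becomes $B=(I_0\setminus A)+n$, so a fixed point is determined by $A$ alone; hence $f_m=\binom{n}{m}$. Consequently the shell at distance $m$ contributes $\tfrac{1}{2}\binom{n}{m}\bigl(\binom{n}{m}+1\bigr)$ independent vectors.

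Finally, summing over $0\le m\le s$ gives the dimension of the underlying vector space of $T_{e_{I_0}}^s$; the $m=0$ shell contributes precisely the single vector $e_{I_0}$, that is the point $p$ itself, so passing to projective dimension by subtracting $1$ removes exactly this term and produces $\tfrac{1}{2}\sum_{m=1}^{s}\binom{n}{m}\bigl(\binom{n}{m}+1\bigr)$ for $1\le s\le n-1$. The case $s\ge n$ is immediate from Proposition \ref{PropOscLG}, which yields $T_p^s\mathcal{LG}(n,2n)=\mathbb{P}(V_{\omega})$; as a consistency check, the same sum evaluated at $s=n$ reproduces the value of $\dim\mathbb{P}(V_{\omega_n})$ recorded in Section \ref{IsG}. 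The one genuinely substantive point is the linear independence of the generators, and I expect the main obstacle to be setting up the shell decomposition and the fixed-point count $f_m$ correctly; once the disjoint-support structure is in place, independence is automatic and the rest is elementary binomial bookkeeping.
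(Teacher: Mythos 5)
Your proof is correct and takes essentially the same route as the paper: reduction to the point $e_{I_0}$ via the transitive action of $\Sp(2n)$, followed by the explicit description of $T_{e_{I_0}}^s\mathcal{LG}(n,2n)$ from Proposition \ref{PropOscLG}. The paper's proof leaves the resulting count implicit, and your shell decomposition with the orbit/fixed-point bookkeeping for the involution $J\mapsto J'$ (giving $N_m=\binom{n}{m}^2$, $f_m=\binom{n}{m}$, hence $\tfrac{1}{2}\binom{n}{m}\bigl(\binom{n}{m}+1\bigr)$ per shell) correctly supplies exactly the details the paper omits.
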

\begin{proof}
The symplectic group $Sp(2n)$ acts transitively on $\mathcal{LG}(n,2n)$ and hence $\dim(T_{p}^s\mathcal{LG}(n,2n)) = T_{e_I}^s\mathcal{LG}(n,2n)$ for any $p\in \mathcal{LG}(n,2n)$. Now, the claim follows from Proposition \ref{PropOscLG}.
\end{proof}

\stepcounter{thm}
\subsection{Osculating spaces of Spinor varieties in the Pl\"ucker embedding}\label{oscLGSec}
Let us write the parametrization induced	 by the embedding $\beta_n$ in (\ref{embS1}) as follows:
\stepcounter{thm}
\begin{equation}\label{parSplu}
\beta:\C^{\frac{n(n-1)}{2}}\rightarrow \mathcal{S}_n\subset \mathbb{P}(\bigwedge^nV_{+})
\end{equation}
given by
$$
M=\left(
  \begin{array}{cccccc}
    1 & \cdots & 0 & 0 & \cdots & a_{1,n}\\
    \vdots & \ddots & \vdots &\vdots & \ddots& \vdots\\
    0 & \cdots & 1 &-a_{1,n} & \cdots & 0\\
      \end{array}
\right)\mapsto (\det(M_J))_{J\in\Lambda}
$$
where $M_J$ is the $n\times n$ matrix obtained from $M$ considering just the columns indexed by $J$.

Fix $J\in \Lambda$, and let $\sigma_J$ and $\overline{M}_J$ as in (\ref{Minor}). Then $\det(M_J)=\epsilon(\sigma_J)\det\overline{M}_J$.

Now, set $J'\in \Lambda$ as in (\ref{J'}), then $\det\overline{M}_J = -\det\overline{M}_{J'}$ for each $J,J'\in \Lambda$ with $J\neq J'$ that satisfies (\ref{J'}), and $\det\overline{M}_J=0$ if $d(I,J)$ is odd and $J'=J$ in (\ref{J'}). Thus we have
$$\epsilon(\sigma_J)\det(M_J)=-\epsilon(\sigma_{J'})\det(M_{J'})$$
for each $J,J'\in \Lambda$ with $J\neq J'$ satisfying (\ref{J'}). 

Consider $\Sigma$ as in (\ref{bigsigma}).
For each $(J,J')\in \Sigma$ with $J\neq J'$ consider the hyperplane $V(\epsilon(\sigma_J)P_J+\epsilon(\sigma_{J'})P_{J'})$, and for each $(J,J)\in \Sigma$ with $d(I,J)$ odd, consider the hyperplane $ V(P_J)$. We have that
$$\mathbb{P}(\bigwedge^nV_{+}) =\bigcap_{{\tiny\begin{array}{c}(J,J')\in \Sigma \text{ and }J\neq J'\\
\text{or}\\
(J,J)\in \Sigma\text{ and }d(I,J)\text{ is odd}
\end{array}}}V(\epsilon(\sigma_J)P_J+\epsilon(\sigma_{J'})P_{J'})$$
and hence $\mathcal{S}_n=\mathcal{G}(n,V)\cap\mathbb{P}(\bigwedge^nV_{+})$. Now, define  
$$\Sigma_s:=\{(J,J')\in \Sigma \: | \: d(I,J)\leq s\text{ and } J\neq J'\}\cup\{(J,J)\in \Sigma \: | \: d(I,J)\leq s \text{ is even}\}$$

\begin{Proposition}\label{Osc_S_Pl} 
For any $s\geq0$ and $I\in \Lambda$ we have
$$T_{e_I}^s\mathcal{S}_n = \langle\{e_J \: | \: (J,J)\in\Sigma_s\}\cup\{\epsilon(\sigma_J)e_J-\epsilon(\sigma_{J'})e_{J'}\: | \: (J,J')\in\Sigma_s\}\rangle$$
In particular, $T_{e_I}^s\mathcal{S}_n = T_{e_I}^s\mathcal{G}(n,V)\cap\mathbb{P}(\bigwedge^nV_{+})$, $T_{e_I}^s\mathcal{S}_n = \mathbb{P}(\bigwedge^nV_{+})$ for $s\geq 2\lfloor\frac{n}{2}\rfloor$, and $\mathcal{S}_n=\mathcal{G}(n,V)\cap\mathbb{P}(\bigwedge^nV_{+})$ is osculating well-behaved.
\end{Proposition}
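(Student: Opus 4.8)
The plan is to adapt the proof of Proposition~\ref{PropOscLG} to the skew-symmetric parametrization (\ref{parSplu}), computing $T_{e_I}^s\mathcal{S}_n$ as the span of the partial derivatives of $\beta$ at the origin and tracking the features in which the skew-symmetry $a_{i,j}=-a_{j,i}$, $a_{i,i}=0$, departs from the symmetric case. Since $\SO(2n)$ acts transitively on $\mathcal{S}_n$ and fixes the subspace $\mathbb{P}(\bigwedge^nV_{+})$, it suffices to prove the formula at $e_I$ with $I=I_0=\{1,\dots,n\}$ and then transport it by equivariance. As before, each $a_{i,j}$ occupies at most two entries of $M_J$ (positions $(i,j+n)$ and $(j,i+n)$, with opposite signs), so a third derivative in one variable vanishes, and $\det(M_J)$ is homogeneous of degree $m=d(I,J)$, so only its order-$m$ derivatives survive at $0$; thus $0\le s\le n$ suffices.

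Carrying out the Laplace expansion exactly as in Proposition~\ref{PropOscLG}, now with each differentiation in $a_{k,k'}$ acting on both $a_{k,k'}$ and its partner $a_{k',k}=-a_{k,k'}$, I would sort the order-$m$ derivatives at $0$ according to the pairing (\ref{J'}). For a genuine pair $(J,J')\in\Sigma$ with $J\neq J'$, the relation $\det\overline{M}_J=-\det\overline{M}_{J'}$ recalled before the statement turns the Lagrangian combination $\epsilon(\sigma_J)e_J+\epsilon(\sigma_{J'})e_{J'}$ into $\epsilon(\sigma_J)e_J-\epsilon(\sigma_{J'})e_{J'}$; this sign change is the first new feature. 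For a self-paired index $J'=J$ the submatrix $\overline{M}_J$ is an $m\times m$ principal, hence skew-symmetric, submatrix of the right block of $M$: when $m$ is odd $\det\overline{M}_J\equiv 0$ and the coordinate contributes nothing, while when $m$ is even $\det\overline{M}_J=\pf(\overline{M}_J)^2$ is a perfect square, detected by a suitable (in general repeated) order-$m$ derivative as a nonzero multiple of $e_J$. This last point is the genuinely new difficulty: the pure squares force one to use higher-multiplicity derivatives that do not occur in the symmetric case, where the analogous self-paired minors already carry distinct-variable monomials.

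Taking the span over $0\le m\le s$ yields the asserted generators of $T_{e_I}^s\mathcal{S}_n$. To identify this with $T_{e_I}^s\mathcal{G}(n,V)\cap\mathbb{P}(\bigwedge^nV_{+})$, I would combine $T_{e_I}^s\mathcal{G}(n,V)=\langle e_J\mid d(I,J)\le s\rangle$ with the description, recalled above the statement, of $\mathbb{P}(\bigwedge^nV_{+})$ as the intersection of the hyperplanes $V(\epsilon(\sigma_J)P_J+\epsilon(\sigma_{J'})P_{J'})$ for $J\neq J'$ and $V(P_J)$ for self-paired $J$ with $d(I,J)$ odd: since $\epsilon(\sigma_J)^2=1$, each difference $\epsilon(\sigma_J)e_J-\epsilon(\sigma_{J'})e_{J'}$ lies on the first kind of hyperplane and each self-paired-odd $e_J$ is killed by the second, so intersecting $\langle e_J\mid d(I,J)\le s\rangle$ with all of them returns exactly the two families above. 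Transporting by equivariance gives $T_{p}^s\mathcal{S}_n=T_{p}^s\mathcal{G}(n,V)\cap\mathbb{P}(\bigwedge^nV_{+})$ at every point, i.e. osculating well-behavedness in the sense of Definition~\ref{Wellbehaved}. Finally, the unique index of maximal distance $d(I,J)=n$ is $J=I_0^c$, which is self-paired; it yields the coordinate $e_{I_0^c}$ precisely when $n$ is even, at distance $n=2\lfloor\frac{n}{2}\rfloor$, and is annihilated when $n$ is odd, whereas every other basis vector of $\bigwedge^nV_{+}$ comes from a pair with $d(I,J)\le n-1\le 2\lfloor\frac{n}{2}\rfloor$; hence $\Sigma_s$ contains all generators once $s\ge 2\lfloor\frac{n}{2}\rfloor$, giving $T_{e_I}^s\mathcal{S}_n=\mathbb{P}(\bigwedge^nV_{+})$.

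The step I expect to be the main obstacle is the self-paired even case combined with the sign bookkeeping: one must check both that the skew-symmetric differentiations combine off-diagonally into the single relation $\epsilon(\sigma_J)e_J-\epsilon(\sigma_{J'})e_{J'}$, and that the diagonal contributions are correctly produced by repeated derivatives through the identity $\det\overline{M}_J=\pf(\overline{M}_J)^2$, vanishing exactly for odd $m$. Once this analysis is in place, the intersection formula and the remaining assertions follow formally from the spanning description, as above.
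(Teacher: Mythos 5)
Your proposal follows exactly the paper's route: the paper's own proof of Proposition \ref{Osc_S_Pl} consists precisely of re-running the proof of Proposition \ref{PropOscLG} with the parametrization (\ref{parSplu}) and recording that the order-$m$ derivatives at the origin are $\epsilon(\sigma_J)e_J-\epsilon(\sigma_{J'})e_{J'}$ for pairs with $J\neq J'$, $e_J$ for self-paired $J$ at even distance, and $0$ otherwise, after which the identification with $T_{e_I}^s\mathcal{G}(n,V)\cap\mathbb{P}(\bigwedge^nV_{+})$ and the bound $s\geq 2\lfloor\frac{n}{2}\rfloor$ follow just as you describe. Your extra observations --- the Pfaffian-square mechanism forcing repeated derivatives to detect self-paired even-distance coordinates, and the equivariance step upgrading the statement at the points $e_I$ to osculating well-behavedness at every point --- are details the paper leaves implicit rather than a genuinely different argument.
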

\begin{proof}
The proof is analogous to the one of Proposition \ref{PropOscLG}. It is enough to use the parametrization (\ref{parSplu}) and to note that in this case
$$\frac{\partial^m\beta}{\partial a_{k_1,k_1'} \cdots\partial a_{k_m,k_m'}}(0)=
\left\{\begin{array}{cl}
\epsilon(\sigma_J)e_J-\epsilon(\sigma_{J'})e_{J'}& \text{if } (J,J')\in\Sigma_m\setminus \Sigma_{m-1}\text{ and }J\neq J'\\
e_J& \text{if } (J,J)\in\Sigma_m\setminus \Sigma_{m-1}\\
0&\text{otherwise}
\end{array}\right.
$$
and thus
$$\left\langle\frac{\partial^m\beta}{\partial a_{K,K'}}(0) \right\rangle= \langle\{e_J\text{ : }(J,J)\in\Sigma_m\setminus\Sigma_{m-1}\}\cup\{\epsilon(\sigma_J)e_J-\epsilon(\sigma_{J'})e_{J'}\text{ : }(J,J')\in\Sigma_m\setminus\Sigma_{m-1}\}\rangle 
$$
Therefore
$$
T_{e_I}^s\mathcal{S}_n = \left\langle\frac{\partial^m\beta}{\partial a_{K,K'}}(0)\mid 0\leq m\leq s\right\rangle = \langle\{e_J\: | \: (J,J)\in\Sigma_s\}\cup\{\epsilon(\sigma_J)e_J-\epsilon(\sigma_{J'})e_{J'}\: | \: (J,J')\in\Sigma_s\}\rangle 
$$
and hence
$$
T_{e_I}^s\mathcal{S}_n = \{P_J=0\mid J\in \Lambda\text{ and } d(I,J)>s\}\cap\mathbb{P}^{N-M} = T_{e_I}^s\mathcal{G}(n,V)\cap\mathbb{P}^{N-M}
$$
where $M = \frac{1}{2}\sum_{i=1}^n\binom{n}{k}^2-\binom{n}{k}+\sum_{k=1}^{\lceil\frac{n}{2}\rceil}\binom{n}{2k-1}$.
\end{proof}

\begin{Corollary}\label{dim_Osc_S1}
For any $p\in \mathcal{S}_n\subseteq\mathbb{P}(\bigwedge^nV_{+})$ we have 
$$\dim(T_{p}^s\mathcal{S}_n) = \frac{1}{2}\sum_{k=1}^{s}\binom{n}{k}\left(\binom{n}{k}+1\right)-\sum_{k=1}^{\lceil\frac{s}{2}\rceil}\binom{n}{2k-1}$$
for $1\leq s\leq 2\lfloor\frac{n}{2}\rfloor-1$ while $T_{p}^s\mathcal{S}_n = \mathbb{P}(\bigwedge^nV_{+})$ for $s\geq 2\lfloor\frac{n}{2}\rfloor$.
\end{Corollary}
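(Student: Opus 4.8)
The plan is to follow exactly the pattern of Corollary \ref{dimOscLG}, so that all the analytic content is already contained in Proposition \ref{Osc_S_Pl} and what remains is a dimension count. First I would invoke homogeneity: the special orthogonal group $SO(V)=SO(2n)$ acts transitively on $\mathcal{S}_n$, and this action is induced by a linear representation on $\bigwedge^n V_{+}$, so the function $p\mapsto\dim(T_p^s\mathcal{S}_n)$ is constant on $\mathcal{S}_n$. Hence it suffices to compute $\dim(T_{e_I}^s\mathcal{S}_n)$ at the fixed point $p=e_I$ with $I=I_0=\{1,\dots,n\}$, and the problem reduces to finding the dimension of the explicit span furnished by Proposition \ref{Osc_S_Pl}.

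Next I would read off the spanning set of $T_{e_I}^s\mathcal{S}_n$ from Proposition \ref{Osc_S_Pl}, namely the vectors $e_J$ with $(J,J)\in\Sigma_s$ together with the vectors $\epsilon(\sigma_J)e_J-\epsilon(\sigma_{J'})e_{J'}$ with $(J,J')\in\Sigma_s$ and $J\neq J'$. The key point is that these vectors are linearly independent: each self-paired index $J=J'$ contributes a single basis vector $e_J$ supported on the one coordinate $P_J$, while each genuine (unordered) pair $\{J,J'\}$ contributes one difference vector supported on the two coordinates $P_J,P_{J'}$. Since the map $J\mapsto J'$ of (\ref{J'}) is a fixed-point-free involution on the non-self-paired indices, distinct pairs have disjoint supports, and the self-paired singletons occupy yet other coordinates; thus the whole collection is independent and $\dim(T_{e_I}^s\mathcal{S}_n)$ is the number of these vectors minus one.

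It then remains to organize the count by the distance $m=d(I,J)$. Writing $J$ via $A=J\cap I_0$ and $B=(J\cap I_0^c)-n$, the involution of (\ref{J'}) sends $(A,B)$ to $(B^c,A^c)$, so the self-paired indices are exactly those with $B=A^c$; at distance $m$ there are $\binom{n}{m}$ of them, and by the parity discussion preceding Proposition \ref{Osc_S_Pl} the corresponding coordinate survives in $\mathbb{P}(\bigwedge^n V_{+})$ precisely when $m$ is even. As there are $\binom{n}{m}^2$ indices at distance $m$ in all, the non-self-paired ones form $\tfrac12\bigl(\binom{n}{m}^2-\binom{n}{m}\bigr)$ pairs. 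Summing over $0\le m\le s$ yields
$$
\dim(T_{e_I}^s\mathcal{S}_n)=\sum_{m=0}^{s}\frac12\left(\binom{n}{m}^2-\binom{n}{m}\right)+\sum_{\substack{m=0\\ m\text{ even}}}^{s}\binom{n}{m}-1,
$$
and a routine rearrangement—splitting $\sum_{m=1}^s\binom{n}{m}$ into even and odd parts, the odd part assembling into $\sum_{k=1}^{\lceil s/2\rceil}\binom{n}{2k-1}$—turns this into the claimed closed formula. Finally, for $s\ge 2\lfloor n/2\rfloor$ Proposition \ref{Osc_S_Pl} already gives $T_{e_I}^s\mathcal{S}_n=\mathbb{P}(\bigwedge^n V_{+})$. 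The one genuinely delicate step, and the only real obstacle, is the bookkeeping of the parity condition on the self-paired indices; this is precisely where the spinor case departs from the Lagrangian case of Corollary \ref{dimOscLG}, since everything analytic has already been settled in Proposition \ref{Osc_S_Pl}.
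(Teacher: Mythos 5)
Your proposal is correct and takes essentially the same approach as the paper: reduce by homogeneity under $SO(2n)$ to the coordinate point $e_I$, then read the dimension off the explicit spanning set of Proposition \ref{Osc_S_Pl}. The paper's own proof leaves the combinatorial count implicit (it simply says to argue as in Corollary \ref{dimOscLG}), whereas you carry it out in detail; your bookkeeping of the $\frac{1}{2}\bigl(\binom{n}{m}^2-\binom{n}{m}\bigr)$ pairs, the $\binom{n}{m}$ self-paired indices surviving only for even $m$, and the disjoint-support independence argument is accurate and recovers the stated formula.
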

\begin{proof}
The special orthogonal group $SO(2n)$ acts transitively on $\mathcal{S}_n$. Now, the claim follows arguing as in Corollary \ref{dimOscLG}, using Proposition \ref{Osc_S_Pl} instead of Proposition \ref{PropOscLG}.
\end{proof}

\begin{say}\label{repth}
Let $G$ be a connected semisimple complex algebraic group, $\mathfrak{g}$ its Lie algebra, and 
$$U\mathfrak{g} = \frac{T\mathfrak{g}}{(x\otimes y - y\otimes x - [x,y], x,y\in \mathfrak{g})}$$
the universal enveloping algebra. Note that $U\mathfrak{g}$ has a natural filtration $U\mathfrak{g}^0 \subset U\mathfrak{g}^l\subset\dots$ such that $U\mathfrak{g}^s$ is spanned by products $g_1 \dots g_l$, where $g_i\in\mathfrak{g}$ and $l\leq s$.

Let $V_{\lambda}$ be an irreducible $G$-module with the highest weight $\lambda$, and $v_{\lambda}\in V_{\lambda}$ a highest weight vector. The action of $U\mathfrak{g}$ on $V_{\lambda}$ induces a filtration $V_\lambda^0\subset V_\lambda^1\subset \dots$ 
of $V_{\lambda}$
such that $V_{\lambda}^s = U\mathfrak{g}  ^sv_{\lambda}$.

Let $x_{\lambda}\in \mathbb{P}(V_{\lambda})$ be the point corresponding to $v_{\lambda}\in V_{\lambda}$, and $X = G/P \subseteq \mathbb{P}(V_{\lambda})$ the orbit of $x_{\lambda}$ via $G$, where $P$ is a parabolic subgroup, namely the stabilizer of $x_{\lambda}$ in $G$. If $\mathfrak{p}$ is the Lie algebra of $P$ then we can identify $T^1_{x_{\lambda}}X$ with $\mathfrak{g}/\mathfrak{p}$. Furthermore, by \cite[Proposition 2.3]{LM03} we have that $T^s_{x_{\lambda}}X = \mathbb{P}(V_{\lambda}^s)$ for any $s\geq 1$.
\end{say}

\begin{Corollary}\label{cor_rep_th}
Let $\mathcal{G}(n,V), \mathcal{LG}(n,2n), \mathcal{S}_n\subseteq\mathbb{P}(\bigwedge^nV)$ be the orbits of $p = [e_1\wedge\dots\wedge e_n]$ respectively under the action of $GL(V),Sp(V),SO(V)$, and let $\mathbb{P}(V_{\omega_n}),\mathbb{P}(\bigwedge^nV_{+})\subseteq\mathbb{P}(\bigwedge^nV)$ the irreducible representations generated respectively by $Sp(V),SO(V)$.

We denote by $V_{\mathcal{G}}^s,V_{\mathcal{LG}}^s,V_{\mathcal{S}}^s$ the pieces of the filtration described in (\ref{repth}) for $\bigwedge^nV, V_{\omega_n}, \bigwedge^nV_{+}$ respectively. Then we have
$$\mathbb{P}(V_{\mathcal{LG}}^s) = \mathbb{P}(V_{\mathcal{G}}^s)\cap \mathbb{P}(V_{\omega_n}),\quad \mathbb{P}(V_{\mathcal{S}}^s) = \mathbb{P}(V_{\mathcal{G}}^s)\cap \mathbb{P}(\bigwedge^nV_{+})$$
for any $s\geq 1$.
\end{Corollary}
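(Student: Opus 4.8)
The plan is to combine the representation-theoretic description of osculating spaces recorded in \ref{repth} with the explicit intersection formulae already proved in Propositions \ref{PropOscLG} and \ref{Osc_S_Pl}. The key observation that makes everything fit together is that the single point $p=[e_1\wedge\dots\wedge e_n]$ serves as the highest weight vector simultaneously for all three representations $\bigwedge^nV$, $V_{\omega_n}$ and $\bigwedge^nV_{+}$. Concretely, I would first fix the standard maximal torus and Borel subgroup associated to the ordered basis $e_1,\dots,e_{2n}$, and check that with the forms $\omega$ and $Q$ chosen in Section \ref{IsG} (whose off-diagonal block shape makes the top-left block vanish) the subspace $\langle e_1,\dots,e_n\rangle$ is Lagrangian, respectively maximal isotropic. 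This places $p$ in all three orbits $\mathcal{G}(n,V)$, $\mathcal{LG}(n,2n)$ and $\mathcal{S}_n$, and exhibits $v=e_1\wedge\dots\wedge e_n$ as a highest weight vector of weight $\omega_n$ in $\bigwedge^nV$ as a $GL(V)$-module. Since the irreducible submodules $V_{\omega_n}$ for $Sp(V)$ and $\bigwedge^nV_{+}$ for $SO(V)$ are, by their very construction in Section \ref{IsG}, the cyclic submodules generated by $v$, the same vector $v$ remains a highest weight vector for each of them.

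Next I would invoke \ref{repth} three times, with $G=GL(V)$ acting on $\bigwedge^nV$, with $G=Sp(V)$ acting on $V_{\omega_n}$, and with $G=SO(V)$ acting on $\bigwedge^nV_{+}$, obtaining the identifications
$$\mathbb{P}(V_{\mathcal{G}}^s)=T_p^s\mathcal{G}(n,V),\quad \mathbb{P}(V_{\mathcal{LG}}^s)=T_p^s\mathcal{LG}(n,2n),\quad \mathbb{P}(V_{\mathcal{S}}^s)=T_p^s\mathcal{S}_n$$
for every $s\geq 1$, where the filtration pieces $V_{\mathcal{G}}^s,V_{\mathcal{LG}}^s,V_{\mathcal{S}}^s$ are all built from the one vector $v$. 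Because $p=e_I$ with $I=\{1,\dots,n\}$ is precisely the point used in Propositions \ref{PropOscLG} and \ref{Osc_S_Pl}, those results read $T_p^s\mathcal{LG}(n,2n)=T_p^s\mathcal{G}(n,V)\cap\mathbb{P}(V_{\omega_n})$ and $T_p^s\mathcal{S}_n=T_p^s\mathcal{G}(n,V)\cap\mathbb{P}(\bigwedge^nV_{+})$. Substituting the three displayed identifications into these two equalities yields $\mathbb{P}(V_{\mathcal{LG}}^s)=\mathbb{P}(V_{\mathcal{G}}^s)\cap\mathbb{P}(V_{\omega_n})$ and $\mathbb{P}(V_{\mathcal{S}}^s)=\mathbb{P}(V_{\mathcal{G}}^s)\cap\mathbb{P}(\bigwedge^nV_{+})$ at once.

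The only genuinely delicate point is the compatibility of conventions in the first step: I have to ensure that a single Borel subgroup, or at least the single highest weight vector $v$, works uniformly for all three groups, so that \ref{repth} applies with the same $p$ in each case and the three filtrations are generated by $v=e_1\wedge\dots\wedge e_n$. Once this bookkeeping is settled, the corollary is a purely formal consequence of \ref{repth} together with the two propositions, and no further computation is required.
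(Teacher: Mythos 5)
Your proposal is correct and follows essentially the same route as the paper: the paper's proof is exactly the combination of \cite[Proposition 2.3]{LM03} (as recalled in \ref{repth}, giving $\mathbb{P}(V_{\mathcal{G}}^s)=T_p^s\mathcal{G}(n,V)$, $\mathbb{P}(V_{\mathcal{LG}}^s)=T_p^s\mathcal{LG}(n,2n)$, $\mathbb{P}(V_{\mathcal{S}}^s)=T_p^s\mathcal{S}_n$) with the intersection formulae of Propositions \ref{PropOscLG} and \ref{Osc_S_Pl}. Your added bookkeeping --- checking that $\langle e_1,\dots,e_n\rangle$ is isotropic for the chosen forms $\omega$ and $Q$, so that the single vector $e_1\wedge\dots\wedge e_n$ is a highest weight vector generating all three filtrations --- is exactly the verification the paper leaves implicit.
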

\begin{proof}
It is a consequence of \cite[Proposition 2.3]{LM03} and Propositions \ref{PropOscLG}, \ref{Osc_S_Pl}.
\end{proof}

\begin{Remark}
As observed in Section \ref{IsG}, if $d = 2m+1$ and $r < m$ or $d = 2m$ and $r < m-1$ then $\bigwedge^rV$ is an irreducible $SO(V)$-module. Therefore, in these cases, a result analogous to Corollary \ref{cor_rep_th} can not hold for the embedding  $\mathcal{G}_{Q}(r,V)\subseteq\mathcal{G}(r,V)\subseteq\bigwedge^rV$. 
\end{Remark}

\stepcounter{thm}
\subsection{Osculating spaces of Spinor varieties in the Spinor embedding}
Let us denote by $\Gamma$ the following set
\stepcounter{thm}
\begin{equation}\label{Gamma}
\Gamma=\{I=\{i_1,\ldots,i_r\}\subset \{1,\ldots,2n\} \: | \: r \text{ is even and }0\leq r\leq 2n\}
\end{equation} 
when $r=0$ we set, by convention, $I=\varnothing$. Now, let us consider the parametrization
\stepcounter{thm}
\begin{equation}\label{parSS}
\alpha:\mathbb{C}^{\frac{n(n-1)}{2}}\rightarrow \mathcal{S}_n\subseteq \mathbb{P}^{2^{n-1}-1}
\end{equation} 
defined by
$$(I_n, A)=\left(
  \begin{array}{ccccccc}
    1      & \cdots & 0      & 0       &a_{1,2}    & \cdots  & a_{1,n}\\
    0      & \cdots & 0      & -a_{1,2} & 0        &\ddots   & \vdots\\
    \vdots & \ddots & \vdots &\vdots   & \ddots   & \ddots  & a_{n-1,n}\\
    0      & \cdots & 1      &-a_{n,1}  & \cdots   & -a_{n-1,n}       &0       \\
      \end{array}
\right)\mapsto (\pf(A_I))_{I\in\Gamma}$$
induced by the embedding $\alpha_n$ in (\ref{embS2}), where $A_I$ is the submatrix of $A$ obtained considering the rows and the columns indexed by $I\in \Gamma$.

For $I\in \Gamma$ we will denote by $P_I$ the corresponding homogeneous coordinate of $\mathbb{P}^{2^{n-1}-1}$, and by $e_I\in \mathbb{P}^{2^{n-1}-1}$ the coordinate point given by $P_J=0$ for $I\neq J$.

\begin{Proposition}\label{osc_S2}
For any $s\geq 0$ and $I\in \Gamma$ we have
$$T_{e_I}^s\mathcal{S}_n = \bigl\langle e_J \: | \: |J|\leq 2s\bigl\rangle = \bigl\{P_J = 0 \: |\: |J|> 2s\bigl\}$$
In particular, $T_{e_I}^s\mathcal{S}_n = \mathbb{P}(\Delta)$ for $s\geq\left\lfloor\frac{n}{2}\right\rfloor$.
\end{Proposition}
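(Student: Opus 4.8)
The plan is to follow the same pattern as the proof of Proposition \ref{PropOscLG}, exploiting that in the Spinor embedding the coordinate functions are Pfaffians rather than minors. First I would reduce to a single point: since $SO(2n)$ acts transitively on $\mathcal{S}_n$ and linearly on $\Delta$, it suffices to verify the formula at the base point $\alpha(0)=e_\varnothing$ of the chart (\ref{parSS}), so we may assume $I=\varnothing$. The crucial structural observation is that, for $J\in\Gamma$, the Pfaffian $\pf(A_J)$ is a homogeneous polynomial of degree $|J|/2$ in the entries $a_{i,j}$ of the skew-symmetric matrix $A$ (with the convention $\pf(A_\varnothing)=1$), and that each of its monomials is squarefree, being a signed product $\prod a_{i,j}$ over a perfect matching of $J$.

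Next I would read off the partial derivatives of $\alpha$ at the origin. Because a homogeneous polynomial of degree $k$ has all partials of order $m\neq k$ vanishing at $0$, the only derivatives of $\pf(A_J)$ that survive at $A=0$ are those of order exactly $|J|/2$. Concretely, for a multi-index of order $m$ given by pairs $(k_1,k_1'),\dots,(k_m,k_m')$, the $J$-component of $\frac{\partial^m\alpha}{\partial a_{k_1,k_1'}\cdots\partial a_{k_m,k_m'}}(0)$ equals the coefficient of the corresponding squarefree monomial in $\pf(A_J)$; this is $\pm 1$ when the $m$ pairs form a perfect matching of $J$ (forcing $|J|=2m$ and $J=\bigcup_i\{k_i,k_i'\}$) and $0$ otherwise.

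From this the span is immediate. Fixing $m$ disjoint pairs whose union is a $2m$-set $S$, the associated derivative is $\pm e_S$; conversely every $2m$-subset $S$ admits a perfect matching, so it arises this way, and the monomial $\prod a_{i,j}$ it selects involves exactly the indices of $S$ and hence cannot occur in any $\pf(A_{J'})$ with $J'\neq S$, $|J'|=2m$. Therefore the order-$m$ derivatives at $0$ span precisely $\langle e_S : |S|=2m\rangle$, and collecting $0\le m\le s$ yields $T^s_{e_\varnothing}\mathcal{S}_n=\langle e_J : |J|\le 2s\rangle=\{P_J=0 : |J|>2s\}$. For the last assertion, the largest even cardinality of a subset of $\{1,\dots,n\}$ is $2\lfloor n/2\rfloor$, so once $2s\ge 2\lfloor n/2\rfloor$, i.e. $s\ge\lfloor n/2\rfloor$, every basis vector $e_J$ is reached and $T^s_{e_\varnothing}\mathcal{S}_n=\mathbb{P}(\Delta)$.

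The only genuine content, and hence the step to handle with care, is the combinatorial identification in the second paragraph: one must check that differentiating a Pfaffian along a perfect matching isolates a single coordinate, that is, that distinct $2m$-subsets never share a degree-$m$ monomial. This is what replaces the Laplace-expansion bookkeeping of Proposition \ref{PropOscLG}, and it is considerably shorter here because the monomials of $\pf(A_J)$ are indexed directly by matchings; the homogeneity of $\pf(A_J)$ does all the remaining work, so no delicate cancellation analysis is needed.
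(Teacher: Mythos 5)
Your proof is correct and follows essentially the same route as the paper: reduce to $I=\varnothing$ by homogeneity, differentiate the Pfaffian coordinates of the chart (\ref{parSS}) at the origin, observe that the only surviving order-$m$ partials are $\pm e_{K\dot{\cup}K'}$ when the pairs $(k_i,k_i')$ are pairwise disjoint (and zero otherwise), and take the span over $0\leq m\leq s$. The only cosmetic difference is that you read this off from the perfect-matching expansion of $\pf(A_J)$ (homogeneity plus squarefree monomials), whereas the paper iterates the identity $\frac{\partial \pf(A_I)}{\partial a_{k,k'}}=(-1)^{k+k'+1}\pf(A_{\widehat{k},\widehat{k'}})$ and evaluates the resulting sub-Pfaffians at $0$; the two computations are interchangeable.
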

\begin{proof}
We can suppose that $I=\varnothing\in \Gamma$ and use the parametrization (\ref{parSS}). First, note that each variable appears in degree at most one the expression of $\alpha$. Thus, deriving two times with respect to the same variable always gives zero. Furthermore, since the degree of $\pf(A_I)$ with respect to the $a_{i,j}$'s is at most $\left\lfloor\frac{n}{2}\right\rfloor$ all partial derivatives of order grater than or equal to $\left\lfloor\frac{n}{2}\right\rfloor+1$ vanish.

Let $I=\{i_1,\ldots,i_r\}\in \Gamma$ with $r>0$ and $k,k'\in \{1,\ldots,n\}$ with $k\neq k'$. Then
$$\frac{\partial \pf(A_I)}{\partial a_{k,k'}}=\left\{\begin{array}{cl}
(-1)^{k+k'+1} \pf(A_{\widehat{k},\widehat{k'}}) & \text{if  } k,k'\in I\\
0 & \text{otherwise}
\end{array}\right.$$

In general, let $m\geq 1$ and $K=\{k_1,\ldots,k_m\}\subset\{1,\ldots,n\}, K'=\{k_1',\ldots,k_m'\}\subset\{1,\ldots,n\}$ such that $k_j\neq k_j'$ for all $1\leq j\leq m$, $K\cap K'=\emptyset$ and $|K|=|K'|=m$. Then
$$\frac{\partial^m \pf(A_I)}{\partial a_{k_1,k_1'}\cdots \partial a_{k_m,k_m'}}=\left\{\begin{array}{cl}
\pm \pf(A_{\widehat{K},\widehat{K'}})& \text{if  }K,K'\subset I\\
0 & \text{otherwise}
\end{array}\right.$$

Therefore,
$$\frac{\partial^m \pf(A_I)}{\partial a_{k_1,k_1'}\cdots \partial a_{k_m,k_m'}}(0)=\left\{\begin{array}{cl}
\pm 1 & \text{if  } K\dot{\cup}K' = I\\
0 & \text{otherwise}
\end{array}\right.$$
and
$$\frac{\partial^m\alpha}{\partial a_{k_1,k_1'}\cdots \partial a_{k_m,k_m'}}(0)=\pm e_{K\dot{\cup}K'}$$
Finally, we get $T_{e_I}^s\mathcal{S}_n=\left\langle\frac{\partial^m\alpha}{\partial a_{K,K'}}(0)\: | \: |K|=|K'|=m\leq s\rangle=\langle \pm e_I \: | \: |I|\leq 2s\right\rangle$.
\end{proof}

\begin{Corollary}\label{dim_osc_S2}
For any point $p\in\mathcal{S}_n\subseteq\mathbb{P}(\Delta)$ we have
$$\dim T_p^s\mathcal{S}_n = \sum_{k=1}^s\binom{n}{2k}$$
for $0\leq s\leq\left\lfloor\frac{n}{2}\right\rfloor-1$ while $T_p^s\mathcal{S}_n=\mathbb{P}^{2^{n-1}-1} $ for $s\geq \left\lfloor\frac{n}{2}\right\rfloor$.
\end{Corollary}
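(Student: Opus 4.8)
The plan is to derive everything directly from Proposition \ref{osc_S2}, which identifies the $s$-osculating space at the coordinate point as the span of the $e_J$ with $|J| \leq 2s$, where $J$ ranges over $\Gamma$. First I would invoke the homogeneity of $\mathcal{S}_n$: since $SO(2n)$ acts transitively on $\mathcal{S}_n$ (as already used in Corollary \ref{dim_Osc_S1}), the osculating dimension is constant on smooth points, so it suffices to compute $\dim T_{e_I}^s \mathcal{S}_n$ at the reference point $I = \varnothing$. By Proposition \ref{osc_S2} this projective dimension equals $-1$ plus the number of indices $J \in \Gamma$ with $|J| \leq 2s$, since the $e_J$ appearing are distinct coordinate points and hence linearly independent.

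The core of the computation is purely combinatorial. The set $\Gamma$ consists of the even-cardinality subsets of $\{1,\dots,2n\}$, but the relevant indexing for the Spinor embedding is by even subsets of $\{1,\dots,n\}$: each Pfaffian $\pf(A_I)$ with $A$ skew-symmetric $n \times n$ corresponds to a principal submatrix indexed by an even subset $I \subseteq \{1,\dots,n\}$. Thus I would count the even subsets $J \subseteq \{1,\dots,n\}$ with $2 \leq |J| \leq 2s$ (the empty set contributes the normalization coordinate $1$, i.e.\ the point itself, accounting for the $-1$). The number of subsets of size exactly $2k$ is $\binom{n}{2k}$, so summing over $k = 1,\dots,s$ gives
\stepcounter{thm}
\begin{equation}
\dim T_p^s \mathcal{S}_n = \sum_{k=1}^s \binom{n}{2k},
\end{equation}
which is the asserted formula. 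The $-1$ from passing to projective dimension is exactly cancelled by the $k=0$ term $\binom{n}{0} = 1$ coming from $e_\varnothing$, so the sum starts cleanly at $k=1$.

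For the stabilization range I would note that the maximal cardinality of an even subset of $\{1,\dots,n\}$ is $2\lfloor n/2 \rfloor$, attained when $2s \geq 2\lfloor n/2 \rfloor$, i.e.\ $s \geq \lfloor n/2 \rfloor$. At that threshold every coordinate point $e_J$ with $J \in \Gamma$ lies in $T_p^s \mathcal{S}_n$, so the osculating space fills the ambient $\mathbb{P}(\Delta) = \mathbb{P}^{2^{n-1}-1}$; this matches the last assertion of Proposition \ref{osc_S2}. The only mild subtlety — and the one point requiring care rather than difficulty — is confirming that the formula is valid precisely on the range $0 \leq s \leq \lfloor n/2 \rfloor - 1$, so that strict inclusions persist and no coordinate is double-counted; for $s$ in this range we have $2s < 2\lfloor n/2 \rfloor$, guaranteeing that some $e_J$ is still missing and the osculating space is proper. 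I expect no genuine obstacle here: once Proposition \ref{osc_S2} is in hand the argument is a transparent binomial count combined with transitivity of the group action.
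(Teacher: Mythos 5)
Your proposal is correct and follows essentially the same route as the paper: reduce to the distinguished point via the transitive $SO(2n)$-action, apply Proposition \ref{osc_S2} to identify the osculating space as the span of the coordinate points $e_J$ with $|J|\leq 2s$, and count even subsets of $\{1,\dots,n\}$ (the paper's proof is just a terse pointer to the analogous argument of Corollaries \ref{dim_Osc_S1} and \ref{dimOscLG}, but it is the same computation). Your explicit remark that the relevant index set consists of even subsets of $\{1,\dots,n\}$ rather than $\{1,\dots,2n\}$, and that the $k=0$ term cancels the shift to projective dimension, is a correct and welcome clarification of details the paper leaves implicit.
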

\begin{proof}
It is enough to argue as in the proof of Corollary \ref{dim_Osc_S1}, using Proposition \ref{osc_S2} instead of Proposition \ref{Osc_S_Pl}.
\end{proof}

\stepcounter{thm}
\subsection{Projections}
We will denote by $\Pi_{T_p^s}:X\dasharrow \mathbb{P}^{N_s}$ the linear projection of an irreducible projective variety $X\subset\mathbb{P}^N$ with center $T_p^sX$. Our aim is to establish, for Lagrangian Grassmannians and Spinor varieties, when such a projection is birational.

\begin{Proposition}\label{proj_osc}
Consider the Lagrangian Grassmannian $\mathcal{LG}(n,2n)\subset\mathbb{P}(V_{\omega_n})$. If $0\leq s\leq n-2$ then $\Pi_{T_p^s}$ is birational.

Now, consider the Spinor variety $\mathcal{S}_n\subset\mathbb{P}(\bigwedge^nV_{+})$ in the Pl\"ucker embedding. If $0\leq s \leq 2\lfloor \frac{n}{2}\rfloor-2$ then $\Pi_{T_p^s}$ is birational.

Finally, consider the Spinor variety $\mathcal{S}_n\subset\mathbb{P}(\Delta)$ in the Spinor embedding. If $0\leq s\leq \lfloor\frac{n}{2}\rfloor-2$ then $\Pi_{T_p^s}$ is birational.
\end{Proposition}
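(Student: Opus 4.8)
The plan is to establish birationality through generic injectivity, which suffices over $\mathbb{C}$, by reconstructing the parametrizing matrix of the affine chart from the coordinates that survive the projection. By Propositions \ref{PropOscLG}, \ref{Osc_S_Pl} and \ref{osc_S2} the centre $T_p^s$ is a coordinate subspace: it is spanned by the coordinate points $e_J$ of small index --- Hamming distance $d(I,J)\le s$ in the two Pl\"ucker models, cardinality $|J|\le 2s$ in the Spinor model --- so projecting from $T_p^s$ is the same as retaining exactly the coordinates of large index. In the charts (\ref{par_LG}), (\ref{parSplu}) and (\ref{parSS}) these are, respectively, the minors of size $\ge s+1$ of the symmetric matrix $A=(a_{i,j})$, the minors of size $\ge s+1$ of the skew-symmetric matrix $B=(a_{i,j})$, and the principal Pfaffians $\pf(B_J)$ with $|J|\ge 2s+2$. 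Thus in every case it is enough to recover the parametrizing matrix, up to the overall projective scalar, from this truncated list of minors or Pfaffians.

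The non-degenerate instances follow from the (Pfaffian) adjugate identity. For $\mathcal{LG}(n,2n)$ the bound $s\le n-2$ keeps all $(n-1)\times(n-1)$ minors together with $\det A$, and since $A$ is invertible at a general point the entries of $A^{-1}=\det(A)^{-1}\operatorname{adj}(A)$ are ratios of surviving coordinates; hence $A$ is a rational function of the image and $\Pi_{T_p^s}$ is birational. The identical reasoning covers $\mathcal{S}_n\subset\mathbb{P}(\bigwedge^nV_{+})$ for $n$ even, where $2\lfloor n/2\rfloor-2=n-2$ and $\det B=\pf(B)^2\ne0$ generically, and the Spinor embedding for $n$ even, where $s\le\lfloor n/2\rfloor-2$ keeps the top Pfaffian $\pf(B)$ and all size-$(n-2)$ principal Pfaffians, so that the skew-symmetric inverse formula $(B^{-1})_{ij}=(-1)^{i+j}\pf(B_{\widehat i\widehat j})/\pf(B)$ reconstructs $B^{-1}$, and with it $B$.

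The delicate cases --- and where I expect the real difficulty to sit --- are those with $n$ odd, for there the top invariant degenerates: $\det B=0$ in the Pl\"ucker model and there is no full-size Pfaffian in the Spinor model, so the highest surviving minors (resp. Pfaffians) only detect the one-dimensional kernel of the generically rank-$(n-1)$ matrix $B$ and cannot separate points on their own. This is exactly why the thresholds fall to $2\lfloor n/2\rfloor-2=n-3$ and to $\lfloor n/2\rfloor-2$: the hypotheses are calibrated to retain also the next level down. I would then reconstruct $B$ by restriction to maximal non-degenerate principal blocks: for each index $i$ the principal submatrix $B_{\widehat i}$ is skew-symmetric of even size $n-1$, hence generically invertible; its top invariant ($\det B_{\widehat i}$, resp. $\pf(B_{\widehat i})$) is a surviving coordinate, and the one-lower minors (resp. principal Pfaffians) of $B$ restrict to the submaximal minors (resp. principal Pfaffians) of $B_{\widehat i}$. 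The even-size recovery of the previous paragraph then yields every entry $a_{j,k}$ with $j,k\ne i$, and letting $i$ range over $\{1,\dots,n\}$ reconstructs all of $B$ since $n\ge 3$.

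Finally I would record the coherence with the ambient Grassmannian. By Propositions \ref{PropOscLG} and \ref{Osc_S_Pl} the Pl\"ucker-embedded $\mathcal{LG}(n,2n)$ and $\mathcal{S}_n$ are osculating well-behaved linear sections of $\mathcal{G}(n,V)$, so each $\Pi_{T_p^s}$ is the restriction to the section of the osculating projection of $\mathcal{G}(n,V)$, post-composed with a linear isomorphism onto the spanned coordinate subspace; the two Pl\"ucker statements may therefore also be deduced from the birationality of the osculating projections of $\mathcal{G}(n,V)$ in \cite{MR19}, the only subtlety being, once more, whether a general point of $\mathcal{S}_n$ stays in the injectivity locus --- which is precisely what breaks down at $s=n-2$ for $n$ odd, for the degeneracy reason above.
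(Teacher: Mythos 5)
Your proposal is correct and follows essentially the same route as the paper: identify the surviving coordinates after projection as the minors (resp.\ principal Pfaffians) above the size threshold, reconstruct the parametrizing matrix via the adjugate (resp.\ Pfaffian inverse) identity when the top invariant is generically nonzero, and in the odd cases pass to the even-size principal blocks $B_{\widehat i}$ and patch the entries together. Your write-up of the odd cases is in fact a more explicit rendering of the paper's terse ``arguing as in the even case we can reconstruct all the $(n-1)\times(n-1)$ submatrices of $B$, and hence $B$ itself.''
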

\begin{proof}
Consider the case of the Lagrangian Grassmannian. It is enough to prove that $\Pi_{T_{e_I}^{n-2}}$ is birational. By the description of the local chart for $\mathcal{LG}(n,2n)$ in (\ref{embLGloc}) and Proposition \ref{PropOscLG} we see that $\Pi_{T_{e_I}^{n-2}}$ keeps track of all the $(n-1)\times (n-1)$ minors of the symmetric matrix $A$, and of its determinant as well. Note that if $A$ is general then with these data we can reconstruct the inverse $A^{-1}$, and therefore $A$ itself. 

Now, consider $\mathcal{S}_n\subset\mathbb{P}(\bigwedge^nV_{+})$, and the parametrization (\ref{parSplu}). With the same notation as in Section \ref{oscLGSec} set $M = (I,B)$, where $B$ is an $n\times n$ skew-symmetric matrix. If $n$ is even then $2\lfloor \frac{n}{2}\rfloor-2 = n-2$, for $B$ general $\det(B)\neq 0$, and we can argue as in the previous case, applying Proposition \ref{Osc_S_Pl}, to prove that $\Pi_{T_p^{n-2}}$ is birational. On the other hand, if $n$ is odd then $\det(B)=0$. In this case $2\lfloor \frac{n}{2}\rfloor-2 = n-3$, and by Proposition \ref{Osc_S_Pl} $\Pi_{T_{e_I}^{n-3}}$ keeps track of all the $(n-1)\times (n-1)$ and $(n-2)\times (n-2)$ minors of $B$. Now, $n-1$ is even, and arguing as in the even case we can reconstruct all the $(n-1)\times (n-1)$ submatrices of $B$, and hence $B$ itself.    

Finally, consider $\mathcal{S}_n\subset\mathbb{P}(\Delta)$, and the parametrization (\ref{parSS}). Recall that if $A$ is an invertible $n\times n$ skew-symmetric matrix then $A^{-1}$ is given by

$$A^{-1}= \frac{1}{\pf(A)}\left(\begin{array}{ccccc}
0& -\pf(A_{1,2}) & \pf(A_{1,3}) & \cdots & -\pf(A_{1,n})\\
\pf(A_{1,2}) & 0  & -\pf(A_{2,3}) & \hdots & \pf(A_{2,n})\\
-\pf(A_{1,3}) & \pf(A_{2,3}) & 0 & \ddots & \vdots\\
\vdots & \vdots & \ddots & \ddots & -\pf(A_{n-1,n})\\
\pf(A_{1,2}) &  -\pf(A_{2,n}) & \cdots & \pf(A_{n-1,n}) & 0\\
\end{array}\right)$$
where $A_{i,j}$ denotes the submatrix of $A$ obtained removing from $A$ the rows and columns indexed by $i$ and $j$ \cite[Section 3]{Kr16}. 

If, $n$ is even then by (\ref{parSS}) and Proposition \ref{osc_S2} we see that $\Pi_{T_{e_I}^{\lfloor\frac{n}{2}\rfloor-2}}$ keeps track of $\pf(A)$ and of all the $(n-2)\times (n-2)$ sub-Pfaffians of $A$. Similarly, if $n$ is odd (\ref{parSS}) and Proposition \ref{osc_S2} yield that $\Pi_{T_{e_I}^{\lfloor\frac{n}{2}\rfloor-2}}$ keeps track of all the $(n-1)\times (n-1)$ and $(n-3)\times (n-3)$ sub-Pfaffians of $A$. Therefore, in both cases to conclude that $\Pi_{T_{e_I}^{\lfloor\frac{n}{2}\rfloor-2}}$ is birational it is enough to argue as we did for $\mathcal{S}_n\subset\mathbb{P}(\bigwedge^nV_{+})$ using the expression for $A^{-1}$ above.   
\end{proof}

\begin{Remark}\label{rem1}
Proposition \ref{proj_osc} can not be improved. A standard computation shows that the projection $\Pi_{T_{e_I}}:\mathcal{S}_{3}\subset\mathbb{P}^9\dasharrow\mathbb{P}^5$ contracts $\mathcal{S}_{3}$ onto the Veronese surface in $\mathbb{P}^5$.  
\end{Remark}

\section{On secant defectivity of Lagrangian Grassmannians and Spinor varieties}\label{sec_def}
We recall the notions of secant varieties, secant defectivity and secant defect. We refer to \cite{Ru03} for a nice and comprehensive survey on the subject.

Let $X\subset\P^N = \mathbb{P}(V)$ be an irreducible non-degenerate variety of dimension $n$ and let
$$\Gamma_h(X)\subset X\times \dots \times X\times\G(h,V)$$
where $h\leq N$, be the closure of the graph of the rational map $\alpha: X\times  \dots \times X \dasharrow \G(h,V)$ taking $h$ general points to their linear span $\langle x_1, \dots , x_{h}\rangle$. Observe that $\Gamma_h(X)$ is irreducible and reduced of dimension $hn$. 

Let $\pi_2:\Gamma_h(X)\rightarrow\G(h,V)$ be the natural projection, and $\mathcal{S}_h(X):=\pi_2(\Gamma_h(X))\subset\G(h,V)$. Again $\mathcal{S}_h(X)$ is irreducible and reduced of dimension $hn$. Finally, consider
$$\mathcal{I}_h=\{(x,\Lambda) \: | \: x\in \Lambda\} \subset\P^N\times\G(h,V)$$
with natural projections $\pi_h$ and $\psi_h$ onto the factors. 

The {\it abstract $h$-secant variety} is the irreducible variety $\Sec_{h}(X):=(\psi_h)^{-1}(\mathcal{S}_h(X))\subset \mathcal{I}_h$. The {\it $h$-secant variety} is $\mathbb{S}ec_{h}(X):=\pi_h(Sec_{h}(X))\subset\P^N$. Then $\Sec_{h}(X)$ is an $(hn+h-1)$-dimensional variety.

The number $\delta_h(X) = \min\{hn+h-1,N\}-\dim\mathbb{S}ec_{h}(X)$ is called the \textit{$h$-secant defect} of $X$. We say that $X$ is \textit{$h$-defective} if $\delta_{h}(X) > 0$.

\stepcounter{thm}
\subsection{Osculating regularity of linear sections} 
In this section we study how the notion of osculating regularity introduced in \cite{MR19} behaves under linear sections. Let us recall \cite[Definition 5.5, Assumption 5.2]{MR19} and \cite[Definition 4.4]{AMR17}.

\begin{Definition}\label{osc_reg}
Let $X\subset\mathbb{P}^N$ be a projective variety. We say that $X$ has \textit{$m$-osculating regularity} if the following property holds: given general points $p_1,\dots,p_{m}\in X$ and an integer $s\geq 0$, 
there exists a smooth curve $C$ and morphisms $\gamma_j:C\to X$, $j=2,\dots,m$, 
such that  $\gamma_j(t_0)=p_1$, $\gamma_j(t_\infty)=p_j$, and the flat limit $T_0$ in the Grassmannian of the family of linear spaces 
$$
T_t=\left\langle T^{s}_{p_1},T^{s}_{\gamma_2(t)},\dots,T^{s}_{\gamma_{m}(t)}\right\rangle,\: t\in C\backslash \{t_0\}
$$
is contained in $T^{2s+1}_{p_1}$. We say that $\gamma_2,\dots, \gamma_m$ realize the $m$-osculating regularity of $X$ for $p_1,\dots,p_m.$

We say that $X$ has \textit{strong $2$-osculating regularity} if the following property holds: given general points $p,q\in X$ and  integers $s_1,s_2\geq 0$, there exists a smooth curve $\gamma:C\to X$ such that $\gamma(t_0)=p$, $\gamma(t_\infty)=q$ and the flat limit $T_0$ in the Grassmannian of the family of linear spaces 
$$
T_t=\left\langle T^{s_1}_p,T^{s_2}_{\gamma(t)}\right\rangle,\: t\in C\backslash \{t_0\}
$$
is contained in $T^{s_1+s_2+1}_p$.
\end{Definition}
For a discussion on the notions of $m$-osculating regularity and strong $2$-osculating regularity we refer to \cite[Section 5]{MR19} and \cite[Section 4]{AMR17}. We will need the following simple result.
\begin{Lemma}\label{lemma_simp}
Let $H\subset\mathbb{P}^n$ be a linear subspace, $H_t$ a family of linear subspaces parametrized by $\mathbb{P}^1\setminus\{0\}$, and $H_0$ its flat limit. Then 
$$\lim_{t\mapsto 0}\{H_t\cap H\}\subseteq H_0\cap H$$
\end{Lemma}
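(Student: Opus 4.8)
The plan is to prove the set-theoretic inclusion $\lim_{t\to 0}\{H_t\cap H\}\subseteq H_0\cap H$ by showing that any point lying in the limit of the intersections must lie both in $H_0$ (the flat limit of $H_t$) and in $H$. Since $H$ is a fixed linear subspace, the second membership is almost automatic, so the real content is the first: a limit of points drawn from $H_t\cap H\subseteq H_t$ lands in the flat limit $H_0$. I would work with the incidence variety picture, regarding the family $\{H_t\}$ as a morphism from $\mathbb{P}^1\setminus\{0\}$ (or a smooth curve covering it) into the appropriate Grassmannian, with $H_0$ its flat limit at $t=0$.

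First I would set up the universal family. Let $\mathcal{H}\subset (\mathbb{P}^1\setminus\{0\})\times\mathbb{P}^n$ be the total space of the family, whose fiber over $t$ is $H_t$, and let $\overline{\mathcal{H}}$ be its closure inside $\mathbb{P}^1\times\mathbb{P}^n$. By the definition of the flat limit, the fiber of $\overline{\mathcal{H}}$ over $t=0$ is exactly $H_0$ (this is where flatness is used: the Hilbert polynomial is constant, so the closure of the family has the expected fiber dimension and the scheme-theoretic fiber over $0$ is $H_0$). I would then consider the closed subfamily $\mathcal{H}\cap ((\mathbb{P}^1\setminus\{0\})\times H)$, whose fiber over $t$ is $H_t\cap H$, and take its closure $\mathcal{K}$ in $\mathbb{P}^1\times\mathbb{P}^n$.

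Next I would observe that $\mathcal{K}\subseteq\overline{\mathcal{H}}$, since $\mathcal{K}$ is the closure of a subset of $\mathcal{H}$, and also $\mathcal{K}\subseteq\mathbb{P}^1\times H$, since each fiber $H_t\cap H$ lies in $H$ and $H$ is closed. Taking fibers over $t=0$, the fiber of $\mathcal{K}$ over $0$ is contained in the fiber of $\overline{\mathcal{H}}$ over $0$, which is $H_0$, and in $H$; hence it lies in $H_0\cap H$. The point is that any point $x\in\lim_{t\to 0}\{H_t\cap H\}$ arises as a limit of points $x_t\in H_t\cap H$, so $(0,x)$ lies in the closure $\mathcal{K}$, and therefore in the fiber $\mathcal{K}_0\subseteq H_0\cap H$. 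This yields the desired inclusion.

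The main obstacle, and the place requiring care, is the precise meaning of the left-hand side $\lim_{t\to 0}\{H_t\cap H\}$: the dimension of $H_t\cap H$ may jump at $t=0$, so this is not a flat limit of the family $\{H_t\cap H\}$ but rather the set of accumulation points of the fibers. I would therefore interpret it concretely via the closure $\mathcal{K}$ as above, i.e. as $\mathcal{K}_0$, and note that the inclusion can be strict precisely because the intersection dimension may increase in the limit (so we only claim $\subseteq$, not equality). A second minor subtlety is reducing to a smooth base curve: if the family is only parametrized by $\mathbb{P}^1\setminus\{0\}$, I would pass to a resolution or finite cover so that the semicontinuity and closure arguments apply cleanly, but this does not affect the set-theoretic conclusion since flat limits are computed along the curve and are insensitive to such base changes.
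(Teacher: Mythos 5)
Your argument is correct, but it takes a genuinely different route from the paper's. The paper proves the lemma by a direct coordinate computation: it normalizes $H=V(x_0,\dots,x_r)$, writes $H_t$ as the zero locus of linear forms $\sum_{i}\alpha_i^j(t)x_i$, computes each side of the inclusion by dividing every form by the largest power of $t$ dividing the relevant coefficients (a power $t^{s_j}$ for the forms restricted to $H$, a power $t^{u_j}$ for the ambient forms), and deduces the inclusion from the elementary inequality $u_j\le s_j$, which makes the limiting equation on the right-hand side either coincide with the one on the left or become trivial. You instead argue abstractly through the incidence picture: the flat limit $H_0$ is the special fiber of the closure $\overline{\mathcal{H}}$ of the total space (the standard uniqueness of flat extensions over a smooth one-dimensional base), and since the closure $\mathcal{K}$ of the intersection family is contained both in $\overline{\mathcal{H}}$ and in the closed set $\mathbb{P}^1\times H$, its special fiber lands in $H_0\cap H$. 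Your approach buys generality and robustness: it applies verbatim to arbitrary families of closed subvarieties, not just linear spaces, and it sidesteps a delicate point in the coordinate method, namely that normalizing a given set of defining equations one at a time need not cut out the flat limit exactly (for $H_t=V(x_1+tx_2,\,x_1+tx_3)$ the normalized equations give $V(x_1)$ while the flat limit is $V(x_1,x_2-x_3)$), so the coordinate computation identifies both sides only up to such an enlargement and must be read with equations chosen compatibly with the limit. What the paper's computation buys is that it is elementary and self-contained, it shows exactly where the inclusion becomes strict (when $u_j<s_j$ the corresponding condition dies on $H$), and its explicit bookkeeping with powers of $t$ is the same technique reused later in the proof of Proposition \ref{2OscRSS}. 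Finally, your insistence on reading the left-hand side as the special fiber $\mathcal{K}_0$, i.e. as the set of limit points of $H_t\cap H$ (whose dimension may jump at $t=0$, which is why only an inclusion is claimed), is the correct interpretation and matches how the lemma is applied in the proof of Proposition \ref{Prop5}.
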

\begin{proof}
We may assume that $H = V(x_0,\ldots,x_r)\subset\mathbb{P}^n$, where $0\leq r\leq n-1$. Write
$$H_t= \left\lbrace\sum_{i=0}^n\alpha_i^1(t)x_i = \dots = \sum_{i=0}^n\alpha_i^k(t)x_i=0\right\rbrace$$

Therefore, $H_t\cap H = \{x_0 = \dots = x_r = \sum_{i=r+1}^n\alpha_i^1(t)x_i = \dots=\sum_{i=r+1}^n\alpha_i^k(t)x_i=0\}$ and 
$$\lim_{t\mapsto 0}\{H_t\cap H\}= \left\lbrace x_0=\dots = x_r = \sum_{i=r+1}^n\frac{\alpha_i^1}{t^{s_1}}(0)x_i=\dots = \sum_{i=r+1}^n\frac{\alpha_i^k}{t^{s_k}}(0)x_i=0\right\rbrace$$
where $s_j$ is the biggest power of $t$ that divides simultaneously $\alpha_{r+1}^j,\ldots,\alpha_n^j$. On the other hand
$$\lim_{t\mapsto 0}\{H_t\}\cap H = \left\lbrace x_0=\dots = x_r = \sum_{i=r+1}^n\frac{\alpha_i^1}{t^{u_1}}(0)x_i=\dots = \sum_{i=r+1}^n\frac{\alpha_i^k}{t^{u_k}}(0)x_i=0\right\rbrace$$                                         
where $u_j$ is the biggest power of $t$ that divides simultaneously $\alpha_0^j,\ldots,\alpha_n^j$. Note that $u_j\leq s_j$ for $j=1,\dots,k,$ and thus we conclude that $\lim_{t\mapsto 0}\{H_t\cap H\}\subseteq \lim_{t\mapsto 0}\{H_t\}\cap H.$
\end{proof}

As a consequence of Lemma \ref{lemma_simp} and Definition \ref{Wellbehaved} we have the following.

\begin{Proposition}\label{Prop5}
Let $X\subset\mathbb{P}^N$ be an irreducible projective variety and $Y=\mathbb{P}^k\cap X$ a linear section of $X$ that is osculating well-behaved. Assume that given general points $p_1,\ldots,p_m\in Y$ one can find smooth curves $\gamma_j:C\rightarrow X, j=2,\dots,m,$ realizing the $m$-osculating regularity of $X$ for $p_1,\ldots,p_m$ such that $\gamma_j(C)\subset Y.$  Then $Y$ has $m$-osculating regularity as well. Furthermore, the analogous statement for strong $2$-osculating regularity holds as well.
\end{Proposition}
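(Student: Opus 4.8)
The plan is to deduce the $m$-osculating regularity of $Y$ directly from that of $X$, using the hypothesis that the realizing curves $\gamma_j$ can be chosen to land inside $Y$, together with Lemma \ref{lemma_simp} to transfer the flat-limit containment from $X$ to $Y$. First I would fix general points $p_1,\dots,p_m\in Y$ and an integer $s\geq 0$. By hypothesis there exist smooth curves $\gamma_j:C\to X$, $j=2,\dots,m$, with $\gamma_j(t_0)=p_1$, $\gamma_j(t_\infty)=p_j$, whose image lies in $Y$, and which realize the $m$-osculating regularity of $X$. Since each $\gamma_j(t)\in Y$ is a smooth point of $Y$ and $Y$ is osculating well-behaved, Definition \ref{Wellbehaved} gives $T^s_{\gamma_j(t)}Y=\mathbb{P}^k\cap T^s_{\gamma_j(t)}X$ for all $t$; in particular the curves are valid candidate curves for realizing the osculating regularity of $Y$.

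The key step is to compare the two families of linear spaces. Write $T_t^X=\langle T^s_{p_1}X,\dots,T^s_{\gamma_m(t)}X\rangle$ and $T_t^Y=\langle T^s_{p_1}Y,\dots,T^s_{\gamma_m(t)}Y\rangle$ for $t\in C\setminus\{t_0\}$. Because each osculating space of $Y$ equals $\mathbb{P}^k$ intersected with the corresponding osculating space of $X$, one expects $T_t^Y\subseteq \mathbb{P}^k\cap T_t^X$ for every such $t$ (the span of the intersections is contained in the intersection of the span with $\mathbb{P}^k$). Denoting by $T_0^X$ and $T_0^Y$ the respective flat limits, I would then invoke Lemma \ref{lemma_simp} with $H=\mathbb{P}^k$ and $H_t=T_t^X$ to obtain
$$
T_0^Y \;\subseteq\; \lim_{t\mapsto 0}\{\mathbb{P}^k\cap T_t^X\} \;\subseteq\; \mathbb{P}^k\cap T_0^X.
$$
By the $m$-osculating regularity of $X$ we have $T_0^X\subseteq T^{2s+1}_{p_1}X$, so $\mathbb{P}^k\cap T_0^X\subseteq \mathbb{P}^k\cap T^{2s+1}_{p_1}X$. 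Finally, applying the osculating well-behaved property of $Y$ once more at the point $p_1$ gives $\mathbb{P}^k\cap T^{2s+1}_{p_1}X=T^{2s+1}_{p_1}Y$, and chaining the inclusions yields $T_0^Y\subseteq T^{2s+1}_{p_1}Y$, which is exactly the condition required for $Y$ to have $m$-osculating regularity.

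The main obstacle I anticipate is justifying rigorously that the flat limit $T_0^Y$ is indeed bounded above by the flat limit of the intersected family rather than merely by the intersection of the limits; this is precisely the content of Lemma \ref{lemma_simp}, which compares the order of vanishing of the defining linear forms and shows the limit of the intersections sits inside the intersection of the limit. One must be careful that $T_t^Y$ and $\mathbb{P}^k\cap T_t^X$ may not coincide for individual $t$, but only satisfy an inclusion; since flat limits are monotone with respect to inclusions of families of equidimensional subspaces (or, after passing to a subfamily of constant dimension, one compares directly), the inclusion of limits follows. For the strong $2$-osculating regularity statement the argument is verbatim the same: one takes the single curve $\gamma:C\to X$ with image in $Y$ realizing strong $2$-osculating regularity of $X$, applies the well-behaved property to replace the osculating spaces $T^{s_1}_pX,T^{s_2}_{\gamma(t)}X$ by their $\mathbb{P}^k$-sections, and uses Lemma \ref{lemma_simp} to conclude $T_0^Y\subseteq \mathbb{P}^k\cap T^{s_1+s_2+1}_pX=T^{s_1+s_2+1}_pY$.
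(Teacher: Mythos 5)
Your proof is correct and follows essentially the same route as the paper's: use the well-behaved property to write each osculating space of $Y$ as $\mathbb{P}^k\cap T^s X$, note the span of these intersections lies in $\langle T^s_{p_1}X,\dots,T^s_{\gamma_m(t)}X\rangle\cap\mathbb{P}^k$, pass to flat limits via Lemma \ref{lemma_simp}, and convert back with well-behavedness at $p_1$. Your explicit remark on monotonicity of flat limits under inclusion is a point the paper leaves implicit, but the argument is the same.
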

\begin{proof}
By hypothesis given general points $p_1,\ldots,p_m\in Y$ and an integer $s\geq 0$ there exist smooth curves $\gamma_j:C\rightarrow X$ with $\gamma_j(t_0)=p_1$ and $\gamma_j(\infty)=p_j$ for $j=2,\ldots, m$ such that $\gamma_j(C)\subset Y$. Consider the family of linear spaces
$$T_t=\langle T_{p_1}^sY,T_{\gamma_2(t)}^sY,\ldots,T_{\gamma_m(t)}^sY\rangle$$
parametrized by $C\setminus\{t_0\}$. Since $Y$ is osculating well-behaved we can write $T_t$ as follows
$$\begin{array}{ccl}
T_t&=&\langle T_{p_1}^sY,T_{\gamma_2(t)}^sY,\ldots,T_{\gamma_m(t)}^sY\rangle=\langle T_{p_1}^sX\cap\mathbb{P}^s,T_{\gamma_2(t)}^sX\cap\mathbb{P}^s,\ldots,T_{\gamma_m(t)}^sX\cap\mathbb{P}^s\rangle\\
   &\subseteq&\langle T_{p_1}^sX,T_{\gamma_2(t)}^sX,\ldots,T_{\gamma_m(t)}^sX\rangle\cap\mathbb{P}^s
\end{array}$$
Therefore 
$$\lim_{t\mapsto 0}\{T_t\}\subseteq \lim_{t\mapsto 0}\{\langle T_{p_1}^sX,T_{\gamma_2(t)}^sX,\ldots,T_{\gamma_m(t)}^sX\rangle\}\cap\mathbb{P}^s = T_p^{2s+1}Y$$
where the last inclusion comes from Lemma \ref{lemma_simp}. This argument, with the obvious changes, proves that strong $2$-osculating regularity passes from $X$ to $Y$ as well.
\end{proof}

\stepcounter{thm}
\subsection{On secant defectivity of $\mathcal{LG}(n,2n)$}
In this section we will take advantage of the machinery developed in the previous sections to get a condition ensuring the non secant defectivity of $\mathcal{LG}(n,2n)\subset\mathbb{P}(V_{\omega})$. 

\begin{Proposition}\label{2OscRLG}
Let $p,q\in \mathcal{LG}(n,2n)\subset\mathbb{P}(V_{\omega})$ be general points and $s_1,s_2\geq 0$ integers. There exists a rational normal curve $\gamma:\mathbb{P}^1\rightarrow\mathcal{LG}(n,2n)$ of degree $n$ such that $\gamma(0)=p$ and $\gamma(\infty)=q$. Furthermore, consider the family of linear spaces
$$T_t=\langle T_p^{s_1}\mathcal{LG}(n,2n),T_{\gamma(t)}^{s_2}\mathcal{LG}(n,2n)\rangle$$
parametrized by $\mathbb{P}^1\setminus\{0\}$, and let $T_0$ be its flat limit in the Grassmannian. Then $T_0\subset T_p^{s_1+s_2+1}\mathcal{LG}(n,2n)$, that is $\mathcal{LG}(n,2n)\subset\mathbb{P}(V_{\omega})$ has strong $2$-osculating regularity.
\end{Proposition}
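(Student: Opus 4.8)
The plan is to produce the curve $\gamma$ very explicitly and then compute the flat limit $T_0$ directly in Plücker coordinates, using the basis description of the osculating spaces supplied by Proposition \ref{PropOscLG}. Since $\Sp(2n)$ acts transitively on $\mathcal{LG}(n,2n)$ and, in fact, two-transitively enough to move a general pair $(p,q)$ to a standard pair, I would first reduce to the case where $p=e_{I_0}$ corresponds to $\left(\begin{smallmatrix} I_n\\ 0\end{smallmatrix}\right)$ and $q$ corresponds to $\left(\begin{smallmatrix} 0\\ I_n\end{smallmatrix}\right)$, the two Lagrangian planes being complementary. In the affine chart $\mathcal{U}_0$ of (\ref{embLGloc}) this means taking the symmetric matrix $A$ for $p$ to be $0$ and letting $q$ be the point at infinity of a suitable line of symmetric matrices. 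The natural candidate is the one-parameter family $A(t)=t\,I_n$, i.e. $\gamma(t)=\left(\begin{smallmatrix} I_n\\ tI_n\end{smallmatrix}\right)$; in Plücker coordinates (\ref{embLGloc}) this reads $\gamma(t)=(1,tI_n,\bigwedge^2(tI_n),\dots,\bigwedge^n(tI_n))$, whose homogeneous entries are monomials $t^{d(I_0,J)}$ (up to sign). This is visibly a rational normal curve of degree $n$ meeting the two chosen points at $t=0$ and $t=\infty$, which settles the first assertion.

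For the limit statement, the key is the weight (grading) structure that this curve induces: by Proposition \ref{PropOscLG} the osculating space $T_{e_{I_0}}^{s}$ is spanned by the vectors $\epsilon(\sigma_J)e_J+\epsilon(\sigma_{J'})e_{J'}$ with $(J,J')\in\Sigma_s$, and the index $d(I_0,J)$ — the "level" at which the basis vector $e_J$ enters — is exactly the exponent of $t$ governing how $\gamma(t)$ scales that coordinate. So I would pass to the $\C^*$-action on $\P(V_\omega)$ that assigns weight $d(I_0,J)$ to the coordinate $P_J$; the curve $\gamma$ is, after the reduction above, an orbit of (a cocharacter giving) this action through the fixed points $p$ and $q$. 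The osculating spaces $T_{\gamma(t)}^{s_2}$ are then the $\gamma(t)$-translates of $T_q^{s_2}$, and translating by the torus simply rescales coordinates by powers of $t$.

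The main computation, and the step I expect to be the real obstacle, is identifying the flat limit $T_0=\lim_{t\to 0}T_t$ of the span $T_t=\langle T_p^{s_1},T_{\gamma(t)}^{s_2}\rangle$ and checking the inclusion $T_0\subseteq T_p^{s_1+s_2+1}$. Here $T_p^{s_1}$ is a coordinate subspace (spanned by the $e_J$-combinations with $d(I_0,J)\le s_1$), while $T_{\gamma(t)}^{s_2}$ is a $t$-dependent perturbation of the analogous coordinate subspace at $q$. The standard way to compute such a limit is to write generators of $T_{\gamma(t)}^{s_2}$, clear the lowest power of $t$ from each, and read off the leading coefficients; because every coordinate carries a well-defined weight under the $\C^*$-action, each generator of $T_{\gamma(t)}^{s_2}$ contributes, in the $t\to 0$ limit, a vector supported on indices $J$ with $d(I_0,J)\le s_1+s_2+1$. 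The bookkeeping that each surviving leading term lands in level at most $s_1+s_2+1$ — rather than spilling into higher levels — is the heart of the matter; it is precisely the same phenomenon that produces the bound $2s+1$ in the definition of $m$-osculating regularity, specialized here to the two osculating orders $s_1,s_2$. Once the leading terms are seen to satisfy $d(I_0,J)\le s_1+s_2+1$, Proposition \ref{PropOscLG} identifies them as elements of $T_p^{s_1+s_2+1}\mathcal{LG}(n,2n)$, giving $T_0\subseteq T_p^{s_1+s_2+1}$ and hence strong $2$-osculating regularity.
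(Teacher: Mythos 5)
Your construction of the curve is correct and is essentially the paper's: after moving the general pair $(p,q)$ to the standard pair of transverse Lagrangians by $Sp(2n)$ (which does act transitively on such pairs), your curve $\gamma(t)=(I_n,tI_n)$ is the curve $M_t=(I_n,tA)$ of the paper with $A=I_n$, and its Pl\"ucker coordinates are monomials $t^{d(I_0,J)}$ supported on the self-paired indices $J=J'$, so it is indeed a degree $n$ rational normal curve joining the two points. The gap is in the second half. You correctly identify the flat-limit computation as ``the heart of the matter,'' but you do not carry it out, and the method you propose for it cannot establish the statement even in principle. Writing generators of $T_{\gamma(t)}^{s_2}$, clearing the lowest power of $t$, and reading off leading coefficients only produces vectors that certainly lie in $T_0$: it bounds $T_0$ from \emph{below}, whereas the assertion to be proved, $T_0\subseteq T_p^{s_1+s_2+1}$, is an upper bound. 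The flat limit is in general strictly larger than the span of the limits of any fixed set of generators, because sections whose leading terms cancel contribute new directions. Already for $s_1=s_2=0$ this is visible: the generators $p$ and $\gamma(t)$ of $T_t=\langle p,\gamma(t)\rangle$ both limit to $p$, yet $T_0$ is the tangent line $\langle p,\gamma'(0)\rangle$, the extra direction coming from the section $\tfrac{1}{t}(\gamma(t)-p)$. To control $T_0$ from above one must either (i) exhibit, for every $J$ with $d(I_0,J)>s_1+s_2+1$, a family of hyperplanes of the form $P_J+t(\cdots)=0$ containing $T_t$ for all $t\neq 0$, so that the limit lands inside $\{P_J=0\}$ --- this is exactly what the paper does in the genuinely computational case of Proposition \ref{2OscRSS}, where the existence of such hyperplanes reduces to a linear system whose solvability rests on a binomial determinant evaluation \cite[Corollary 2]{GV85} --- or (ii) produce a specific system of generators whose normalized limits remain linearly independent and hence span $T_0$. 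Your proposal does neither; saying the bookkeeping ``is precisely the same phenomenon that produces the bound $2s+1$'' restates the claim rather than proving it, and since the bound $s_1+s_2+1$ is sharp, nothing softer than an actual computation (or a reduction to one) can yield it.

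You also miss that the paper never performs this computation for $\mathcal{LG}(n,2n)$ at all: its proof is a transfer argument. Proposition \ref{PropOscLG} shows that $\mathcal{LG}(n,2n)=\mathcal{G}(n,V)\cap\mathbb{P}(V_{\omega})$ is osculating well-behaved; the curves $M_t=(I_n,tA)$ with $A$ symmetric, which by \cite[Proposition 4.1]{MR19} and its proof realize the strong $2$-osculating regularity of $\mathcal{G}(n,V)$, lie inside $\mathcal{LG}(n,2n)$; and Proposition \ref{Prop5} (via Lemma \ref{lemma_simp}) then pushes the regularity from $\mathcal{G}(n,V)$ down to the linear section. Completing your direct approach would amount to re-proving \cite[Proposition 4.1]{MR19} inside $\mathbb{P}(V_{\omega})$; the efficient way to close your gap is instead to invoke that result together with Propositions \ref{PropOscLG} and \ref{Prop5}.
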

\begin{proof}
Note that taking $M_t = (I,tA)$ with $t\in\mathbb{C}$, in the parametrization of $\mathcal{LG}(n,2n)$ in (\ref{par_LG}) we see that $\mathcal{LG}(n,2n)$ is rationally connected by rational normal curves of degree $n$.

By Proposition \ref{PropOscLG} $\mathcal{LG}(n,2n) = \mathcal{G}(n,V)\cap \mathbb{P}(V_{\omega})$ is osculating well-behaved. Furthermore, since by \cite[Proposition 4.1]{MR19} and its proof $\mathcal{G}(n,V)\subset\mathbb{P}(\bigwedge^nV)$ has strong $2$-osculating regularity, with respect to the curve defined by $M_t = (I,tA)$ with $t\in\mathbb{C}$, the statement follows from Proposition \ref{Prop5}.
\end{proof}

We will need the following particular instance of \cite[Theorem 6.2]{MR19}.

\begin{thm}\label{TheoAR_2}
Let $X\subset\mathbb{P}^N$ be an irreducible projective variety having strong $2$-osculating regularity, $p\in X$ a general point, $k\geq 1$ an integer, and set $h:=\left\lfloor\frac{k+1}{2}\right\rfloor$. If $\Pi_{T_p^k}$ is generically finite then $X$ is not $(h+1)$-defective.
\end{thm}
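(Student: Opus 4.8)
The plan is to realize this as the single-centre case of the osculating-projection strategy described in the introduction. By \cite[Proposition 3.5]{CC02} it suffices to prove that the tangential projection $\tau_{X,h}$, whose centre is the span $\langle T^1_{x_1}X,\dots,T^1_{x_h}X\rangle$ of the tangent spaces at $h$ general points, is generically finite; this immediately yields that $X$ is not $(h+1)$-defective. So the whole task is to control this centre.

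First I would record the numerical fact that $2h-1\leq k$ for $h=\lfloor\frac{k+1}{2}\rfloor$ (with equality when $k$ is odd), so that $T^{2h-1}_pX\subseteq T^k_pX$. The heart of the argument is then to exhibit a flat degeneration of the centre $\langle T^1_{x_1}X,\dots,T^1_{x_h}X\rangle$ into a linear space contained in $T^k_pX$. Since everything is collapsed into a \emph{single} point $p$, one does not need the full $m$-osculating regularity but only the strong $2$-osculating regularity, applied $h-1$ times. Starting from $W_1=T^1_pX$, at the $m$-th step I would take the curve $\gamma$ furnished by strong $2$-osculating regularity for the pair $(p,x_m)$ and the orders $s_1=2m-3$, $s_2=1$, and combine the previously obtained limit $W_{m-1}\subseteq T^{2m-3}_pX$ with $T^1_{\gamma(t)}X$. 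As $\langle W_{m-1},T^1_{\gamma(t)}X\rangle\subseteq\langle T^{2m-3}_pX,T^1_{\gamma(t)}X\rangle$ and the flat limit of the right-hand family lies in $T^{s_1+s_2+1}_pX=T^{2m-1}_pX$, the inclusion of flat limits (Lemma \ref{lemma_simp}) gives $W_m\subseteq T^{2m-1}_pX$. After $h-1$ steps the centre degenerates into $W_h\subseteq T^{2h-1}_pX\subseteq T^k_pX$.

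To conclude, observe that $\Pi_{T_p^k}$ factors through the projection from any subspace $W\subseteq T^k_pX$; hence if $\Pi_{T_p^k}$ is generically finite, so is the projection from the special centre $W_h$, since its image dominates that of $\Pi_{T_p^k}$ and therefore has dimension $\dim X$. The centres $Z_t=\langle T^1_{x_1}X,\dots,T^1_{x_h}X\rangle$ and their flat limit $W_h$ all have the same dimension inside the relevant Grassmannian, so the $\Pi_{Z_t}$ form a family of projections from centres of constant dimension. By upper semicontinuity of the dimension of the general fibre in such a family, the value at the general $t$ is bounded above by the value at the special member; the latter being $0$, the generic member $\tau_{X,h}$ is generically finite, and we are done.

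I expect the main obstacle to be the rigorous bookkeeping of the \emph{iterated} flat limits in the inductive step: at each stage one must replace the already-degenerated space $W_{m-1}$, itself a flat limit, by a fixed subspace of $T^{2m-3}_pX$ and then degenerate only the new tangent space, so one has to check that these successive one-parameter limits assemble into a single admissible degeneration of the original centre $\langle T^1_{x_1}X,\dots,T^1_{x_h}X\rangle$. This is exactly the technical content packaged in \cite[Theorem 6.2]{MR19}, of which the present statement is the instance $l=1$, $k_1=k$; alternatively one may simply invoke that theorem, noting that with a single osculating point its hypotheses reduce to strong $2$-osculating regularity.
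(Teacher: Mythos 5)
Your proposal is correct, and its closing alternative is precisely the paper's proof: the paper disposes of this statement in one line by setting $m=2$ and $l=1$ in \cite[Theorem 6.2]{MR19}, exactly the specialization you describe (and, as you note, with $m=2$ the hypotheses there reduce to strong $2$-osculating regularity, since strong $2$-osculating regularity implies $2$-osculating regularity). The degeneration-plus-semicontinuity argument occupying the bulk of your sketch is a faithful outline of the internal proof of that cited theorem in the single-point case; the iterated-flat-limit bookkeeping you flag as the main obstacle is exactly the technical content packaged in \cite{MR19} and not reproduced in this paper.
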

\begin{proof}
Set $m = 2$ and $l=1$ in \cite[Theorem 6.2]{MR19}.
\end{proof}

Now, we are ready to prove the main result of this section.

\begin{thm}\label{Main_LG}
If $h\leq\left\lfloor\frac{n+1}{2}\right\rfloor$ then $\mathcal{LG}(n,2n)\subset\mathbb{P}(V_{\omega})$ is not $h$-defective.
\end{thm}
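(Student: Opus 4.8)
The plan is to combine the two main ingredients established earlier in the paper: the strong $2$-osculating regularity of $\mathcal{LG}(n,2n)$ from Proposition \ref{2OscRLG}, and the birationality of the osculating projection from Proposition \ref{proj_osc}, feeding both into the criterion of Theorem \ref{TheoAR_2}. First I would set $k = n-2$, which is the largest value for which Proposition \ref{proj_osc} guarantees that the osculating projection $\Pi_{T_p^{n-2}}$ of $\mathcal{LG}(n,2n)\subset\mathbb{P}(V_\omega)$ is birational, hence generically finite. Since $\mathcal{LG}(n,2n)$ has strong $2$-osculating regularity, Theorem \ref{TheoAR_2} then applies with this value of $k$.

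The next step is purely numerical. With $k = n-2$, Theorem \ref{TheoAR_2} sets $h := \lfloor\frac{k+1}{2}\rfloor = \lfloor\frac{n-1}{2}\rfloor$ and concludes that $\mathcal{LG}(n,2n)$ is not $(h+1)$-defective. I would therefore compute
$$
h+1 = \left\lfloor\frac{n-1}{2}\right\rfloor + 1 = \left\lfloor\frac{n+1}{2}\right\rfloor,
$$
which matches exactly the bound in the statement. Thus non $(h+1)$-defectivity translates directly into non $h$-defectivity for $h \leq \lfloor\frac{n+1}{2}\rfloor$, as claimed. One should note that if $X$ is not $m$-defective for a given $m$, then it is not $h$-defective for any $h \leq m$ as well, so it suffices to establish the top value $h = \lfloor\frac{n+1}{2}\rfloor$.

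There is little genuine obstacle here, since the heavy lifting has already been done: the geometric content lives in Proposition \ref{proj_osc} (the reconstruction of a general symmetric matrix $A$ from its $(n-1)\times(n-1)$ minors and determinant, giving $A^{-1}$ and hence $A$) and in Proposition \ref{2OscRLG} (strong $2$-osculating regularity, inherited from $\mathcal{G}(n,V)$ via the osculating well-behavedness of the linear section). The step I would treat most carefully is the bookkeeping of the floor function, making sure that $k = n-2$ is admissible (which requires $n \geq 2$, and the small cases can be checked directly or are vacuous) and that the identity $\lfloor\frac{n-1}{2}\rfloor + 1 = \lfloor\frac{n+1}{2}\rfloor$ holds for all relevant $n$, which it does for every integer $n$. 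Hence the proof reduces to invoking the two propositions and performing this elementary index computation.
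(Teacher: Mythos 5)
Your overall route is the paper's own: strong $2$-osculating regularity (Proposition \ref{2OscRLG}) plus birationality of the osculating projection (Proposition \ref{proj_osc}) fed into Theorem \ref{TheoAR_2}, and your computation at the top value $k=n-2$ correctly yields non-defectivity for $h=\lfloor\frac{n-1}{2}\rfloor+1=\lfloor\frac{n+1}{2}\rfloor$. The one step that is genuinely wrong as stated is the monotonicity principle you invoke to dispose of the smaller values of $h$: it is \emph{not} true in general that a variety which is not $m$-defective is automatically not $h$-defective for all $h\leq m$. The Veronese surface $V\subset\mathbb{P}^5$ is the standard counterexample: $\mathbb{S}ec_3(V)=\mathbb{P}^5$ has the expected dimension, so $V$ is not $3$-defective, yet $\mathbb{S}ec_2(V)$ is the cubic hypersurface of rank-two symmetric matrices, so $V$ is $2$-defective. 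Non-defectivity propagates downward only under the extra hypothesis that the relevant secant varieties do not fill the ambient space (equivalently, defectivity propagates upward only while the expected dimension stays below $N$), and you never verify this for $\mathcal{LG}(n,2n)$.

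The clean repair, which is what the paper's terse proof implicitly does, is to apply Theorem \ref{TheoAR_2} once for each value of $h$ rather than only at the top: given $2\leq h\leq\lfloor\frac{n+1}{2}\rfloor$, take $k=2h-3$, so that $\lfloor\frac{k+1}{2}\rfloor=h-1$; then $1\leq k\leq n-2$ (the inequality $2h-3\leq n-2$ is precisely $h\leq\lfloor\frac{n+1}{2}\rfloor$), so Proposition \ref{proj_osc} --- which is stated for the whole range $0\leq s\leq n-2$, not just for $s=n-2$ --- gives that $\Pi_{T_p^k}$ is birational, and Theorem \ref{TheoAR_2} then gives that $\mathcal{LG}(n,2n)$ is not $h$-defective; the case $h=1$ is trivial since $\mathbb{S}ec_1(X)=X$ always has the expected dimension. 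Alternatively, you could keep your single application at $k=n-2$ and then check that $\dim(\mathcal{LG}(n,2n))\cdot h+h-1<\dim\mathbb{P}(V_{\omega})$ throughout the range $h\leq\lfloor\frac{n+1}{2}\rfloor$, which indeed holds because the left-hand side grows polynomially in $n$ while $\dim\mathbb{P}(V_{\omega})$ grows exponentially; but this verification must be made explicitly for the downward induction to be legitimate. With either repair your argument is complete and coincides with the paper's.
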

\begin{proof}
By Proposition \ref{2OscRLG} $\mathcal{LG}(n,2n)\subset\mathbb{P}(V_{\omega})$ has strong $2$-osculating regularity. Hence the statement follows from Proposition \ref{proj_osc} and Theorem \ref{TheoAR_2}.
\end{proof}

In the following table we work out the first cases of Theorem \ref{Main_LG}.  
\begin{center}
\begin{tabular}{cccl}
\hline
\textit{n} & & & Theorem \ref{Main_LG}\\
\hline
3,4 & & & not defective for $h\leq 2$\\
5,6 & & & not defective for $h\leq 3$\\
7,8 & & & not defective for $h\leq 4$\\
$\geq 9$ & & & not defective for $h\leq \left\lfloor\frac{n+1}{2}\right\rfloor$\\
\hline
\end{tabular}
\end{center}

In particular, Theorem \ref{Main_LG} improves \cite[Theorem 1.1]{BB11} as soon as $n\geq 9$. Note that by Theorem \ref{Main_LG} the Lagrangian Grassmannian $\mathcal{LG}(4,8)\subset\mathbb{P}^{42}$ is not $2$-defective and by \cite[Theorem 1.1]{BB11} it is $h$-defective for $h=3,4$.

\stepcounter{thm}
\subsection{On secant defectivity of $\mathcal{S}_n$ in the Pl\"ucker embedding}
In this section we will study the secant defectivity of $\mathcal{S}_n\subset\mathbb{P}(\bigwedge^nV_{+})$. 

\begin{Proposition}\label{2OscRSP}
Let $p,q\in \mathcal{S}_n\subset\mathbb{P}(\bigwedge^nV_{+})$ be general points, $s_1,s_2\geq 0$ integers. There exists a rational normal curve $\gamma:\mathbb{P}^1\rightarrow\mathcal{S}_n$ of degree $n$ such that $\gamma(0)=p$ and $\gamma(\infty)=q$. Furthermore, consider the family of linear spaces
$$T_t=\langle T_p^{s_1}\mathcal{S}_n,T_{\gamma(t)}^{s_2}\mathcal{S}_n\rangle$$
parametrized by $\mathbb{P}^1\setminus\{0\}$, and let $T_0$ be its flat limit in the Grassmannian. Then $T_0\subset T_p^{s_1+s_2+1}\mathcal{S}_n$, that is $\mathcal{S}_n\subset\mathbb{P}(\bigwedge^nV_{+})$ has strong $2$-osculating regularity.
\end{Proposition}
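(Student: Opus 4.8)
The plan is to reduce Proposition \ref{2OscRSP} to the strong $2$-osculating regularity of the ambient Grassmannian $\mathcal{G}(n,V)\subset\mathbb{P}(\bigwedge^nV)$, exactly as was done for $\mathcal{LG}(n,2n)$ in Proposition \ref{2OscRLG}. The two ingredients needed to invoke Proposition \ref{Prop5} are: that $\mathcal{S}_n=\mathcal{G}(n,V)\cap\mathbb{P}(\bigwedge^nV_{+})$ is osculating well-behaved, and that one can realize the strong $2$-osculating regularity of $\mathcal{G}(n,V)$ by a curve that actually lies inside the linear section $\mathcal{S}_n$.

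First I would exhibit the rational normal curve and verify rational connectedness of $\mathcal{S}_n$ by such curves. Using the local parametrization $\beta$ in (\ref{parSplu}), where a point of $\mathcal{S}_n$ is $M=(I,B)$ with $B$ skew-symmetric, I would set $M_t=(I,tB)$ for $t\in\mathbb{C}$. Since each Pl\"ucker coordinate $\det(M_J)$ is homogeneous of degree $d(I,J)$ in the entries of $B$, and these degrees range from $0$ up to $n$, the curve $t\mapsto\beta(M_t)$ is a rational normal curve of degree $n$ lying on $\mathcal{S}_n$ with $\gamma(0)=p$ and a suitable $\gamma(\infty)=q$ for general $p,q$. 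This is precisely the same family $M_t=(I,tB)$ that, by \cite[Proposition 4.1]{MR19} and its proof, realizes the strong $2$-osculating regularity of $\mathcal{G}(n,V)$; the only difference from the Lagrangian case is that here $B$ is skew-symmetric rather than symmetric, which does not affect the degeneration argument.

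Next I would assemble the reduction. By Proposition \ref{Osc_S_Pl} the equality $\mathcal{S}_n=\mathcal{G}(n,V)\cap\mathbb{P}(\bigwedge^nV_{+})$ holds and this linear section is osculating well-behaved. Since the curve $\gamma$ constructed above lies in $\mathcal{S}_n$ and realizes the strong $2$-osculating regularity of $\mathcal{G}(n,V)$, the hypotheses of Proposition \ref{Prop5} are met (in its strong $2$-osculating regularity form). Applying that proposition transfers strong $2$-osculating regularity from $\mathcal{G}(n,V)$ down to $\mathcal{S}_n$, giving $T_0\subset T_p^{s_1+s_2+1}\mathcal{S}_n$ as claimed.

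The one point requiring genuine care, and the likely main obstacle, is confirming that the \emph{same} curve $M_t=(I,tB)$ used for $\mathcal{G}(n,V)$ in \cite{MR19} works when $B$ is forced to be skew-symmetric and lands in $\mathcal{S}_n$ rather than in the other spinor component or merely in $\mathcal{G}(n,V)$. One must check that for general $p,q\in\mathcal{S}_n$ the endpoints $\gamma(0),\gamma(\infty)$ are indeed general points of $\mathcal{S}_n$ (so that the regularity conclusion applies at a general point), which follows from the transitivity of $SO(2n)$ on $\mathcal{S}_n$ together with the openness of the chart $\mathcal{U}_0$. Granting this, the degeneration computation is identical to the Grassmannian case and no new calculation is needed, so the proof reduces to citing Proposition \ref{Osc_S_Pl}, \cite[Proposition 4.1]{MR19}, and Proposition \ref{Prop5}.
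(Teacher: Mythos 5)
Your proposal is correct and follows essentially the same route as the paper: the paper's proof also takes the curve $M_t=(I,tA)$ in the parametrization (\ref{parSplu}), invokes Proposition \ref{Osc_S_Pl} for the osculating well-behavedness of $\mathcal{S}_n=\mathcal{G}(n,V)\cap\mathbb{P}(\bigwedge^nV_{+})$, cites \cite[Proposition 4.1]{MR19} for the strong $2$-osculating regularity of $\mathcal{G}(n,V)$ realized by that same curve, and concludes via Proposition \ref{Prop5}. Your additional checks (skew-symmetry of $B$ not affecting the degeneration, and generality of the endpoints via transitivity of $SO(2n)$) are consistent elaborations of the same argument.
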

\begin{proof}
Note that taking $M_t = (I,tA)$ with $t\in\mathbb{C}$, in the parametrization of $\mathcal{S}_n$ in (\ref{parSplu}) we see that $\mathcal{S}_n$ is rationally connected by rational normal curves of degree $n$.

By Proposition \ref{Osc_S_Pl} $\mathcal{S}_n = \mathcal{G}(n,V)\cap\mathbb{P}(\bigwedge^nV_{+})$ is osculating well-behaved. Furthermore, since by \cite[Proposition 4.1]{MR19} and its proof $\mathcal{G}(n,V)\subset\mathbb{P}(\bigwedge^nV)$ has strong $2$-osculating regularity, with respect to the curve defined by $M_t = (I,tA)$ with $t\in\mathbb{C}$, the statement follows from Proposition \ref{Prop5}.
\end{proof}

The following result gives a condition ensuring the non secant defectivity of $\mathcal{S}_n\subset\mathbb{P}(\bigwedge^nV_{+})$.

\begin{thm}\label{main_SP}
If $h\leq\left\lfloor\frac{n}{2}\right\rfloor$, then $\mathcal{S}_n\subset\mathbb{P}(\bigwedge^nV_{+})$ is not $h$-defective.
\end{thm}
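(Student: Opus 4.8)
The plan is to mirror exactly the argument used for the Lagrangian Grassmannian in Theorem \ref{Main_LG}, since all the necessary ingredients for $\mathcal{S}_n$ in its Plücker embedding have already been assembled in the preceding propositions. The strategy rests on the general principle (from \cite[Proposition 3.5]{CC02}, repackaged as Theorem \ref{TheoAR_2}) that if some osculating projection $\Pi_{T_p^k}$ is generically finite and the variety enjoys strong $2$-osculating regularity, then the variety is not $(h+1)$-defective for $h=\lfloor\frac{k+1}{2}\rfloor$. So the proof will simply invoke the two relevant structural facts about $\mathcal{S}_n\subset\mathbb{P}(\bigwedge^nV_{+})$ that we have already established.

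First I would record that $\mathcal{S}_n\subset\mathbb{P}(\bigwedge^nV_{+})$ has strong $2$-osculating regularity: this is precisely the content of Proposition \ref{2OscRSP}. Second, I would recall from Proposition \ref{proj_osc} that the osculating projection $\Pi_{T_p^s}$ is birational, hence generically finite, for $0\leq s\leq 2\lfloor\frac{n}{2}\rfloor-2$. Taking the largest admissible value $k=2\lfloor\frac{n}{2}\rfloor-2$ and feeding it into Theorem \ref{TheoAR_2} yields non-$(h+1)$-defectivity for
$$
h=\left\lfloor\frac{k+1}{2}\right\rfloor=\left\lfloor\frac{2\lfloor\frac{n}{2}\rfloor-1}{2}\right\rfloor=\left\lfloor\frac{n}{2}\right\rfloor-1,
$$
so that $\mathcal{S}_n$ is not $h'$-defective for $h'\leq\lfloor\frac{n}{2}\rfloor$, which is the claimed bound.

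There is essentially no obstacle here beyond checking the floor-function arithmetic; the work has all been done in Propositions \ref{proj_osc} and \ref{2OscRSP}. The one point deserving care is the parity bookkeeping: when $n$ is even one has $2\lfloor\frac{n}{2}\rfloor-2=n-2$, while when $n$ is odd one has $2\lfloor\frac{n}{2}\rfloor-2=n-3$, and in both cases the resulting value of $\lfloor\frac{k+1}{2}\rfloor$ correctly gives the uniform bound $h\leq\lfloor\frac{n}{2}\rfloor$. Thus the proof reduces to a two-line citation of the earlier results together with this elementary computation, exactly parallel to the proof of Theorem \ref{Main_LG}.

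\begin{proof}
By Proposition \ref{2OscRSP} the Spinor variety $\mathcal{S}_n\subset\mathbb{P}(\bigwedge^nV_{+})$ has strong $2$-osculating regularity. Moreover, by Proposition \ref{proj_osc} the projection $\Pi_{T_p^s}$ is birational, and in particular generically finite, for $0\leq s\leq 2\lfloor\frac{n}{2}\rfloor-2$. Setting $k=2\lfloor\frac{n}{2}\rfloor-2$ and $h=\lfloor\frac{k+1}{2}\rfloor=\lfloor\frac{n}{2}\rfloor-1$, Theorem \ref{TheoAR_2} yields that $\mathcal{S}_n$ is not $(h+1)$-defective, that is, not $\lfloor\frac{n}{2}\rfloor$-defective. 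Hence $\mathcal{S}_n\subset\mathbb{P}(\bigwedge^nV_{+})$ is not $h$-defective for $h\leq\lfloor\frac{n}{2}\rfloor$.
\end{proof}
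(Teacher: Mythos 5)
Your proposal follows the same route as the paper---Proposition \ref{2OscRSP} for strong $2$-osculating regularity, Proposition \ref{proj_osc} for generic finiteness of the osculating projection, Theorem \ref{TheoAR_2} to conclude---but your final sentence contains a step that is not justified as written. From a single application of Theorem \ref{TheoAR_2} with $k=2\lfloor\frac{n}{2}\rfloor-2$ you obtain that $\mathcal{S}_n$ is not $\lfloor\frac{n}{2}\rfloor$-defective, and you then assert that this implies non-$h$-defectivity for every $h\leq\lfloor\frac{n}{2}\rfloor$. Downward propagation of non-defectivity is false for general varieties: the Veronese surface $\nu_2(\mathbb{P}^2)\subset\mathbb{P}^5$ is not $3$-defective (its $3$-secant variety fills $\mathbb{P}^5$, which is the expected dimension there) and yet it is $2$-defective. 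The implication only holds when the expected dimension at the top level does not exceed the ambient dimension, via the subadditivity $\dim\mathbb{S}ec_{h}(X)\leq\dim\mathbb{S}ec_{h'}(X)+(h-h')(\dim X+1)$ for $h'\leq h$; you neither invoke this inequality nor check the required numerical condition.

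There are two immediate repairs. Either verify the (true, but necessary) inequality $\lfloor\frac{n}{2}\rfloor\left(\frac{n(n-1)}{2}+1\right)-1\leq\frac{1}{2}\binom{2n}{n}-1$, after which your one-shot argument plus subadditivity gives all lower cases; or, better and closer to how the paper's proof should be read, apply Theorem \ref{TheoAR_2} once for each $2\leq h\leq\lfloor\frac{n}{2}\rfloor$ with $k=2h-2$: then $1\leq k\leq 2\lfloor\frac{n}{2}\rfloor-2$, so Proposition \ref{proj_osc} gives that $\Pi_{T_p^{k}}$ is birational, and $\lfloor\frac{k+1}{2}\rfloor=h-1$, so the theorem yields directly that $\mathcal{S}_n$ is not $h$-defective (the case $h=1$ is trivial, which also disposes of $n=3$, where your choice $k=2\lfloor\frac{n}{2}\rfloor-2=0$ is not even allowed by the hypothesis $k\geq 1$ of Theorem \ref{TheoAR_2}). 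With either correction your argument is complete and coincides with the paper's.
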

\begin{proof}
Since by Proposition \ref{2OscRSP} the variety $\mathcal{S}_n\subset\mathbb{P}(\bigwedge^nV_{+})$ satisfies the hypothesis of Theorem \ref{TheoAR_2} the statement follows from Proposition \ref{proj_osc}. 
\end{proof}

\begin{Remark}\label{rem2}
For instance, Theorem \ref{main_SP} yields that $\mathcal{S}_3\subset\mathbb{P}^9$ is not $1$-defective. On the other hand, by Remark \ref{rem1} it is $2$-defective.

Now, consider $\mathcal{S}_4\subset\mathbb{P}^{34}$. First, note that by (\ref{embS2loc}) in the Spinor embedding $\mathcal{S}_4$ is the quadric given by 
$$\mathcal{S}_4 = \{Z_0Z_7-Z_1Z_6+Z_2Z_5-Z_3Z_4 = 0\}\subset\mathbb{P}^{7}$$
Therefore, for $p_1,p_2,p_2\in\mathcal{S}_4\subset\mathbb{P}^{7}$ general points there is a smooth conic $C_{p_1,p_2,p_3}$ contained in $\mathcal{S}_4\subset\mathbb{P}^{7}$, namely the intersection of $\mathcal{S}_4\subset\mathbb{P}^{7}$ with the plane $\left\langle p_1,p_2,p_3\right\rangle$. As noticed in Section \ref{IsG} the Pl\"ucker embedding $\mathcal{S}_4\rightarrow\mathbb{P}^{34}$ is obtained by composing the Spinor embedding with the double Veronese embedding $\nu_2^7:\mathbb{P}^7\rightarrow\mathcal{V}_2^7\subset\mathbb{P}^{35}$. Therefore, $\mathcal{S}_{4}\subset\mathbb{P}^{34}$ is a smooth hyperplane section of the Veronese variety $\mathcal{V}_2^7\subset\mathbb{P}^{35}$. Hence $\mathcal{S}_{4}\subset\mathbb{P}^{34}$ is a smooth $6$-fold of degree $128$, and through three general points $q_1 = \nu_2^7(p_1),q_2 = \nu_2^7(p_2),q_3 = \nu_2^7(p_3)\in \mathcal{S}_4$ there is a smooth rational normal curve $\Gamma_{q_1,q_2,q_3} = \nu_2^7(C_{p_1,p_2,p_3})$ of degree $4$. Summing-up given a general point $p\in\mathbb{S}ec_3(\mathcal{S}_4)\subset\mathbb{P}^{34}$ the exists a degree $4$ rational normal curve $\Gamma$ contained in $\mathcal{S}_4$ and such that $p\in\left\langle\Gamma\right\rangle\cong\mathbb{P}^4$. Since there is a pencil of planes in $\left\langle\Gamma\right\rangle$ that are $3$-secant to $\Gamma$ and thus to $\mathcal{S}_4$ we conclude that $\mathcal{S}_4\subset\mathbb{P}^{34}$ is $3$-defective, and thus $4$-defective as well. Note that by Theorem \ref{main_SP} $\mathcal{S}_4\subset\mathbb{P}^{34}$ is not $2$-defective.

Finally, note that since given three general $3$-planes in $\mathbb{P}^7$ there exists a smooth quadric hypersurface containing them, and hence there is a Spinor variety $\mathcal{S}_4\subset\mathbb{P}^{34}$ through three general points of the Grassmannian $\mathcal{G}(4,V)\subset\mathbb{P}^{69}$, the argument above also shows the well-known $3$-secant defectivity and $4$-secant defectivity of the Grassmannian $\mathcal{G}(4,V)\subset\mathbb{P}^{69}$.   
\end{Remark}

\stepcounter{thm}
\subsection{On secant defectivity of $\mathcal{S}_n$ in the Spinor embedding}
Finally, we study the secant defectivity of $\mathcal{S}_n\subset\mathbb{P}(\Delta)$. In this case, the osculating properties of $\mathcal{S}_n$ can not be deduced from those of $\mathcal{G}(n,V)$, hence we will need a different approach.

Consider the lexicographic order on the set $\Gamma$ in (\ref{Gamma}), and let $\varnothing,J_0\in \Gamma$ be, respectively, the minimal and maximal elements of $\Gamma$. Moreover, consider the points $e_{\varnothing}=[1:0:\cdots:0], e_{J_0}=[0:\cdots:0:1]\in\mathcal{S}_n$.

A general point in a neighborhood of $e_{\varnothing}$ can be represented as a matrix $(I,A)$, where $A$ is a skew-symmetric matrix, and we can consider the following rational normal curve 
\stepcounter{thm}
\begin{equation}\label{Curve2}
\begin{array}{ccccl}
\gamma&:&\mathbb{P}^1&\longrightarrow &\mathcal{S}_n\\
&&[s:t] & \mapsto & \alpha(sI_n,tA)
\end{array}
\end{equation}
where $\alpha$ is the parametrization in (\ref{parSS}). Note that the image of $\gamma$ is a rational normal curve of degree $\lfloor\frac{n}{2}\rfloor$ such that $\gamma((1:0))=e_{\varnothing}$ and $\gamma((0:1))=e_{J_0}$.

Now, consider the following subset of $\Gamma$
$$\Lambda=\left\{\begin{array}{ll}
\{\emptyset\}\cup\Bigl\{ \{2\lambda_i-1,2\lambda_i\} \text{ where }\lambda_i\in\{1,\ldots,\frac{n}{2}\}\Bigl\}\subset\Gamma & \text{ if } n \text{ is even}\\
\{\emptyset\}\cup\Bigl\{  \{2\lambda_i,2\lambda_i+1\} \text{ where }\lambda_i\in\{1,\ldots,\frac{n-1}{2}\}\Bigl\}\subset\Gamma & \text{ if } n \text{ is odd}
\end{array}\right.$$
and for $I,J\in \Gamma$ define
\stepcounter{thm}
\begin{equation}\label{fun_sets}
\begin{array}{l}
\Gamma_k:=\{J\in \Gamma\text{ : } |J|\leq 2k\}\\ 
\Gamma^{+}_J:=\{I\in \Gamma\text{ : }J\subset I\text{ and } I\setminus J\in \Lambda\}\\ 
\Gamma^{-}_I:=\{J\in \Gamma\text{ : } I\in \Gamma^{+}_J\}=\{J\in \Gamma\text{ : }J\subset I\text{ and } I\setminus J\in \Lambda\}
\end{array} 
\end{equation}
Then we may write
$$T_{\gamma(1:t)}^s\mathcal{S}_n=T_{p_t}^s\mathcal{S}_n=
\left\langle e_I^t; |I|\leq 2s \right\rangle=
\left\langle \sum_{J\in\Gamma_I^+}
\pm t^{\frac{|J|-|I|}{2}}e_J
; |I|\leq 2s \right\rangle$$

\begin{Proposition}\label{2OscRSS}
Let $p,q\in \mathcal{S}_n\subset\mathbb{P}(\Delta)$ be general points, and $s_1,s_2\geq 0$ integers. There exists a rational normal curve $\gamma:\mathbb{P}^1\rightarrow\mathcal{S}_n$ of degree $\lfloor\frac{n}{2}\rfloor$ such that $\gamma(0)=p$ and $\gamma(\infty)=q$. Furthermore, consider the family of linear spaces
$$T_t=\langle T_p^{s_1}\mathcal{S}_n,T_{\gamma(t)}^{s_2}\mathcal{S}_n\rangle$$
parametrized by $\mathbb{P}^1\setminus\{0\}$, and let $T_0$ be its flat limit in the Grassmannian. Then $T_0\subset T_p^{s_1+s_2+1}\mathcal{S}_n$, that is $\mathcal{S}_n\subset\mathbb{P}(\Delta)$ has strong $2$-osculating regularity.
\end{Proposition}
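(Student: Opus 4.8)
The plan is to reduce everything, by homogeneity, to the explicit curve (\ref{Curve2}) and then to extract the flat limit by a single rescaling. First I would use that $\SO(2n)$ acts transitively on $\mathcal{S}_n$ and that a general pair of points of $\mathcal{S}_n$ is conjugate under $\SO(2n)$ to the pair $(e_\varnothing,e_{J_0})$; since both the hypotheses and the conclusion of strong $2$-osculating regularity are $\SO(2n)$-invariant, it suffices to treat $p=e_\varnothing$, $q=e_{J_0}$ together with the curve $\gamma$ of (\ref{Curve2}), which has degree $\lfloor\frac{n}{2}\rfloor$ and satisfies $\gamma(1:0)=e_\varnothing$, $\gamma(0:1)=e_{J_0}$. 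By Proposition \ref{osc_S2} applied at $e_\varnothing$ we have $T_p^{s_1}\mathcal{S}_n=\langle e_J\mid |J|\le 2s_1\rangle$, while along the curve $T_{p_t}^{s_2}\mathcal{S}_n=\langle e_I^t\mid |I|\le 2s_2\rangle$ with $e_I^t=\sum_{J\in\Gamma_I^{+}}\pm t^{(|J|-|I|)/2}e_J$ as recorded just before the statement. Recall that, as observed at the start of this subsection, these osculating spaces cannot be read off from those of $\mathcal{G}(n,V)$, so a direct computation is unavoidable.

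The key device is the diagonal one-parameter family $\rho_t\colon e_J\mapsto t^{|J|/2}e_J$. Because the weight $|J|/2$ is exactly the exponent of $t$ occurring in $e_I^t$, one checks that $e_I^t=t^{-|I|/2}\,\rho_t(\overline{e}_I)$, where $\overline{e}_I:=\sum_{J\in\Gamma_I^{+}}\pm e_J$ is the value at $t=1$; moreover $T_p^{s_1}\mathcal{S}_n$ is a coordinate subspace, hence $\rho_t$-invariant. Therefore $T_t=\rho_t(\overline{U})$ for the single, $t$-independent subspace $\overline{U}:=\langle e_J\mid |J|\le 2s_1\rangle+\langle \overline{e}_I\mid |I|\le 2s_2\rangle$. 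Since $\rho_t$ is a one-parameter subgroup with non-negative weights and we let $t\to 0$, the flat limit $T_0=\lim_{t\to 0}\rho_t(\overline{U})$ equals the initial (lowest-degree) subspace $\mathrm{in}(\overline{U})$ of $\overline{U}$ for the grading by $|J|$. As $\mathrm{in}(\overline{U})$ is graded and has the same dimension as $\overline{U}$, the desired inclusion $T_0\subseteq T_p^{s_1+s_2+1}\mathcal{S}_n=\langle e_J\mid |J|\le 2(s_1+s_2+1)\rangle$ is equivalent to the single vanishing $\overline{U}\cap\langle e_J\mid |J|>2(s_1+s_2+1)\rangle=0$.

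It remains to prove this vanishing, which is the heart of the matter and the step I expect to be the main obstacle. Writing a hypothetical element as $v+\sum_{|I|\le 2s_2}c_I\,\overline{e}_I$ with $v\in\langle e_J\mid |J|\le 2s_1\rangle$ and supported only in degrees $>2(s_1+s_2+1)$, I would cancel degree by degree. Each $\overline{e}_I$ has lowest-degree term $\pm e_I$, and its higher terms carry the Pfaffian coefficients $\pm\pf(A_{J\setminus I})$ coming from (\ref{parSS}), while the fixed block $\langle e_J\mid |J|\le 2s_1\rangle$ can only absorb contributions in degrees $\le 2s_1$. Running through the degrees $2s_1<2p\le 2(s_1+s_2+1)$ then produces a triangular system in the $c_I$ whose leading operators are the maps $e_I\mapsto\sum_{|J|-|I|=2k}\pm\pf(A_{J\setminus I})\,e_J$, that is, wedging by the $k$-th power of the general two-form $A$. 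The difficulty is to show these operators are injective in the relevant range, so that the system forces all $c_I=0$ and hence $v=0$: this is a Lefschetz-type injectivity (equivalently, an explicit Pfaffian rank statement) that holds for general $A$ precisely because the curve (\ref{Curve2}) has degree $\lfloor\frac{n}{2}\rfloor$, keeping $2(s_1+s_2+1)$ within range, and it is here that the combinatorics of $\Gamma_I^{+}$ and of $\Lambda$ in (\ref{fun_sets}) must be used explicitly.
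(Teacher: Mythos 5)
Your first two reduction steps are sound, and they take a genuinely different route from the paper: writing $T_t=\rho_t(\overline{U})$ for the one-parameter subgroup $\rho_t(e_J)=t^{|J|/2}e_J$ and the fixed subspace $\overline{U}=T_1$, identifying the flat limit $T_0$ with the initial subspace $\mathrm{in}(\overline{U})$ for the grading by $|J|$, and observing that $T_0\subseteq\langle e_J\mid |J|\le 2(s_1+s_2+1)\rangle$ is equivalent to the vanishing $\overline{U}\cap\langle e_J\mid |J|>2(s_1+s_2+1)\rangle=0$ are all correct, and arguably cleaner than the paper's dual procedure of exhibiting, for each $I$ with $|I|>2s_1+2s_2+2$, a family of hyperplanes $F_I=\sum_{J\in\Gamma_I^-}t^{(|I|-|J|)/2}c_JP_J$ that contains $T_t$ and degenerates to $\{P_I=0\}$.

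The genuine gap is in your third paragraph: the vanishing $\overline{U}\cap\langle e_J\mid |J|>2(s_1+s_2+1)\rangle=0$ is the whole content of the proposition, and you do not prove it — you only name it as ``the main obstacle''. Moreover, the plan you sketch for it does not work as stated. Writing $c^{(m)}=\sum_{|I|=2m}c_Ie_I$, the hypothesis that $v+\sum_Ic_I\overline{e}_I$ lives in degrees $>2(s_1+s_2+1)$ gives no condition at all in degrees $\le 2s_1$ (those components are simply absorbed into $v$), and in each degree $2p$ with $s_1<p\le s_1+s_2+1$ it gives $\sum_{m=0}^{s_2}L_{p-m}\bigl(c^{(m)}\bigr)=0$, where $L_k$ denotes wedging with the $k$-th divided power $\omega^{[k]}$ of the general $2$-form attached to $A$. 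This is a square Hankel-type block system in which every equation mixes all the levels $c^{(0)},\dots,c^{(s_2)}$; it is not triangular, exactly because the lowest-degree terms $\pm e_I$ of the $\overline{e}_I$ can be cancelled against $v$ rather than forcing $c_I=0$. So injectivity of the individual operators $L_k$ (your ``Lefschetz-type injectivity'') is not sufficient; one must invert the full system, and this depends delicately on the precise constants: already for $s_1=0$, $s_2=1$, eliminating $c^{(1)}$ leaves $c_\emptyset\bigl(\omega^{[2]}-\omega\wedge\omega\bigr)=-\frac{1}{2}\,c_\emptyset\,\omega\wedge\omega=0$, which forces $c_\emptyset=0$ only because of the divided-power (Pfaffian) normalization — with ordinary powers the coefficient would vanish identically and the argument would collapse. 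The paper's proof consists precisely in solving this system: for each bad coordinate $I$ it uses the symmetry of $\Gamma_I^-$ to reduce to scalar unknowns $c_0,\dots,c_{\alpha_I}$ depending only on $(|I|-|J|)/2$, arrives at the binomial system (\ref{Equa3}), and concludes by the nonvanishing of a binomial determinant, \cite[Corollary 2]{GV85}. Some combinatorial input of this kind is unavoidable, and without it your proposal is incomplete at its decisive step.
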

\begin{proof}
The existence of the rational normal curve $\gamma$ has been shown in (\ref{Curve2}). Let us work on the affine chart $s=1$. All along the proof we will write $T_p^{s}$ for $T_p^{s}\mathcal{S}_n$. Proposition \ref{osc_S2} yields
$$T_{e_{\varnothing}}^{s_1}=\bigl\langle e_I,\: |I|\leq 2s_1\bigl\rangle,\quad T_{\gamma(1:t)}^{s_2}=T_{p_t}^{s_2}=
\left\langle \sum_{J\in\Gamma_I^+}\pm t^{\frac{|J|-|I|}{2}}e_J,\: |I|\leq 2s_2 \right\rangle$$
Therefore
$$T_t=\left\langle e_I\:|\:|I|\leq 2s_1\:;\:\sum_{J\in\Gamma_I^+}\pm t^{\frac{|J|-|I|}{2}}e_J, \:|I|\leq 2s_2 \right\rangle,\:t\neq0$$
We want to show that 
$$T_0\subseteq T_{e_{\varnothing}}^{s_1+s_2+1} = \bigl\langle e_I,\: |I|\leq 2s_1+2s_2+2\bigl\rangle = \bigl\{P_I=0,\:|I|>2s_1+2s_2+2\bigl\}$$ 

In order to do this it is enough to produce, for each $I\in \Gamma$ with $|I|>2s_1+2s_2+2$, a hyperplane of type
$$P_I+t\left(\sum_{J\in\Gamma; J\neq I}f(t)_{J,I}P_J\right)=0$$
where $f(t)_{J,I}\in \C[t]$ are polynomials. Recall the sets in (\ref{fun_sets}). For each $k>0$ and $I\in \Gamma$ define
$$\alpha_I^k:=|\{I\in \Lambda,\: |I|=2k \text{ and } I\subset J\}|$$
Then we have $|\Gamma^{-}_I|=\sum_{k=1}^{\alpha_I^1}\alpha_I^k$. Alternatively, note that for each $k>0$ we can obtain $\alpha_I^k$ as a function of $a_I^1$ given by $\alpha_I^k=\binom{\alpha_I^1}{k}$. Therefore, we have
$$|\Gamma^{-}_I|=\sum_{k=1}^{\alpha_I^1}\binom{\alpha_I^1}{k} = 2^{\alpha_I^1}$$
In the following, we will denote $\alpha_I^1$ simply by $\alpha_I$. Now, set
$$\Omega=\Gamma_{s_1}\bigcup\left(\bigcup_{J\in\Gamma_{s_2}}\Gamma^{+}_J\right)\subset \Gamma$$

Consider $I\in \Gamma$ such that $|I|>2s_1+2s_2+2$. If $I\notin \Omega$ then $T_t\subset\{P_I=0\}$ for any $t\neq 0$. Now, assume that $I\in \Omega$. Note that for any $e_K^t$ with non-zero homogeneous coordinate $P_I$ we have $I\in \Gamma_K^{+}$ that is $K\in \Gamma_I^{-}$. Thus, it is enough to find a hyperplane $H_I$ of type
$$
F_I=\sum_{J\in \Gamma_I^{-}}t^{\frac{|I|-|J|}{2}}c_JP_J=0
$$
with $c_I\neq 0$ and $T_t\subset H_I$ for any $t\neq 0$. In fact, we can then divide the equation by $c_I$, and we get a hyperplane $H_I$ of type
$$P_I+\frac{t}{c_I}\left(\sum_{J\in \Gamma_I^{-},J\neq I}t^{\frac{|I|-|J|}{2}-1}c_JP_J=0\right)$$
Now, we want to understand what conditions we get by requiring $T_t\subset\{F_I=0\}$ for $t\neq0$. Given $K\in \Gamma_{s_2}$ we have 
$$
\begin{array}{ccccc}
F_I(e_K^t)&=&F_I\left(\sum_{J\in\Gamma_K^+}\left(t^{\frac{|J|-|K|}{2}}e_J\right)\right)&=&\sum_{J\in\Gamma_I^{-}\cap\Gamma_K^{+}}t^{\frac{|I|-|J|}{2}}c_J\left(t^{\frac{|J|-|K|}{2}}\right)\\
&=&t^{\frac{|I|-|K|}{2}}\left(\sum_{J\in\Gamma_I^{-}\cap\Gamma_K^{+}}c_J\right)& &
\end{array}
$$
Thus
$$F_I(e_K^t)=0\: \forall\: t\neq 0\Leftrightarrow\sum_{J\in\Gamma_I^{-}\cap\Gamma_K^{+}}c_J=0$$
This is a linear condition on the coefficients $c_J$, with $J\in \Gamma_I^{-}$. Therefore
\stepcounter{thm}
\begin{equation}\label{Equa}
\begin{array}{ccl}
T_t\subset \{F_I=0\}\text{ for }t\neq0&\Leftrightarrow&\left\{\begin{array}{ll}
                                                          F_I(e_K)=0&\forall K\in\Gamma_I^{-}\cap\Gamma_{s_1}\\
                               F_I(e_K^t)=0 \: \forall\: t\neq0&\forall K\in\Gamma_I^{-}\cap\Gamma_{s_2}
                                        \end{array}\right.\\
&\Leftrightarrow&\left\{\begin{array}{ll}
c_K=0&\forall K\in\Gamma_I^{-}\cap\Gamma_{s_1}\\
\sum_{J\in\Gamma_I^{-}\cap\Gamma_K^{+}}c_J=0 \: \forall\: t\neq 0&\forall K\in\Gamma_I^{-}\cap\Gamma_{s_2}
\end{array}\right.
\end{array}
\end{equation}
The number of conditions on the $c_J$'s, $J\in \Gamma^{-}_I$ is then $|\Gamma_I^{-}\cap\Gamma_{s_1}|+|\Gamma_I^{-}\cap\Gamma_{s_2}|$ and our problem is now reduced to find a solution of the linear system given by (\ref{Equa}) in the $2^{\alpha _1}$ variables $c_J$ such that $c_I\neq 0$. Therefore, it is enough to find $\alpha_I+1$ complex numbers $c_I=c_0\neq 0,c_1,\ldots,c_{\alpha_I}$ satisfying the following equations
\stepcounter{thm}
\begin{equation}\label{Equa1}
\left\{\begin{array}{ll}
c_j=0&\forall j=\alpha_I,\ldots,\frac{|I|}{2}-s_1\\
\sum_{l=0}^{q}|\Gamma_I^{-}\cap\Gamma_{K,l}^{+}|c_{q-l}&\forall K\in\Gamma_I^{-}\cap\Gamma_{s_2}
\end{array}\right.
\end{equation}
where $\Gamma_{K,l}^{+}=\{J\in \Gamma_K^{+},\: |J|=|K|+2l\}$ and $q=\frac{|I|-|K|}{2}$.

Note that, since $K\in \Gamma_I^{-}\cap\Gamma_{s_2}$ we must have $\frac{|I|}{2}-s_2\leq\frac{|I|-|K|}{2}\leq\alpha_I$ for any $I\setminus K\in\Lambda$ with $|K|\leq 2s_2$. Therefore, (\ref{Equa1}) can be written as
$$
\left\{\begin{array}{ll}
c_j=0&\forall\: j=\alpha_I,\ldots,d_1\\
\sum_{l=0}^{j}\left(\begin{matrix}
j\\
j-l
\end{matrix}\right)c_l=0&\forall\: j=\alpha_I,\ldots,d_2
\end{array}\right.
$$
where $d_1=\frac{|I|}{2}-s_1$ and $d_2=\frac{|I|}{2}-s_2$, that is
\stepcounter{thm}
\begin{equation}\label{Equa3}
\begin{array}{ll}
\left\{\begin{array}{l}
c_{\alpha_I}=0\\
\vdots\\
c_{d_1}=0
\end{array}\right. & \left\{\begin{array}{l}
\binom{\alpha_I}{0}c_{\alpha_I}+\binom{\alpha_I}{1}c_{\alpha_I-1}+\dots+\binom{\alpha_I}{\alpha_I-1}c_1+\binom{\alpha_I}{\alpha_I}c_0=0\\
\vdots\\
\binom{d_2}{0}c_{d_2}+\binom{d_2}{1}c_{d_2-1}+\dots+\binom{d_2}{d_2-1}c_1+\binom{d_2}{d_2}c_0=0
\end{array}\right.
\end{array}
\end{equation}

Now, it is enough to show that the linear system (\ref{Equa3}) admits a solution with $c_0\neq 0$. Since $c_{\alpha_I}=\dots=c_{d_1}=0$ the system (\ref{Equa3}) can be rewritten as follows
$$
\left\{\begin{array}{l}
\binom{\alpha_I}{\alpha_I-(d_1-1)} c_{d_1-1}+\binom{\alpha_I}{\alpha_I-(d_1-2)}c_{d_1-2}+\dots+\binom{\alpha_I}{\alpha_I-1}c_1+\binom{\alpha_I}{\alpha_I}c_0=0\\
\vdots\\
\binom{d_2}{d_2-(d_1-1)}c_{d_1-1}+\binom{d_2}{d_2-(d_1-2)}c_{d_1-2}+\dots+\binom{d_2}{d_2-1}c_1+\binom{d_2}{d_2}c_0=0
\end{array}\right.
$$
Thus, it is enough to check that the $(\alpha_I-d_2+1)\times(d_1-1)$ matrix
$$
M=\left(\begin{matrix}
\binom{\alpha_I}{\alpha_I-(d_1-1)} & \binom{\alpha_I}{\alpha_I-(d_1-2)} & \dots & \binom{\alpha_I}{\alpha_I-1}\\
\vdots &\vdots & \ddots & \vdots\\
\binom{d_2}{d_2-(d_1-1)} &\binom{d_2}{d_2-(d_1-2)} & \dots & \binom{d_2}{d_2-1}
\end{matrix}\right)
$$
has maximal rank. Now, note that $\alpha_I\leq\frac{|I|}{2}$ and $\frac{|I|}{2}>s_1+s_2+1$ yield $\alpha_I-\frac{|I|}{2}+s_2+1<d_1\leq d_1-1$. Therefore, it is enough to show that the $(\alpha_I-d_2+1)\times(\alpha_I-d_2+1)$ submatrix
$$
\begin{array}{ccl}
M'&=& \left(\begin{matrix}
\binom{\alpha_I}{\alpha_I-d_2+1} & \binom{\alpha_I}{\alpha_I-d_2}& \dots & \binom{\alpha_I}{1}\\
\vdots&\vdots& \ddots &\vdots\\
\binom{d_2}{\alpha_I-d_2+1} & \binom{d_2}{\alpha_I-d_2} & \dots & \binom{d_2}{1}
\end{matrix}\right)
\end{array}$$
has non-zero determinant. Finally, since $d_2=\frac{|I|}{2}-s_2>s_1+1\geq 1$ \cite[Corollary 2]{GV85} yields that $\det(M')\neq 0$.
\end{proof}

We are ready to prove our main result on non secant defectivity of $\mathcal{S}_n\subset\mathbb{P}(\Delta)$.

\begin{thm}\label{th_main_SS}
If $h\leq\left\lfloor\frac{n+2}{4}\right\rfloor$ then $\mathcal{S}_n\subset\mathbb{P}(\Delta)$ is not $h$-defective.
\end{thm}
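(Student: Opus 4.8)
The plan is to deduce this exactly as Theorems \ref{Main_LG} and \ref{main_SP} were deduced, by combining the strong $2$-osculating regularity of $\mathcal{S}_n\subset\mathbb{P}(\Delta)$ established in Proposition \ref{2OscRSS} with the birationality of the osculating projections from Proposition \ref{proj_osc}, and then feeding both into the abstract non-defectivity criterion of Theorem \ref{TheoAR_2}. The point is that all the geometric content has already been isolated in those earlier results; what remains is to choose the osculating order optimally and to carry out the arithmetic relating the resulting bound to $\lfloor\frac{n+2}{4}\rfloor$.

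Concretely, I would first invoke Proposition \ref{2OscRSS} to guarantee that $\mathcal{S}_n\subset\mathbb{P}(\Delta)$ has strong $2$-osculating regularity, so that the hypotheses of Theorem \ref{TheoAR_2} are in force. Next, Proposition \ref{proj_osc} tells me that the projection $\Pi_{T_p^k}$ from a general point is birational, hence generically finite, precisely when $0\le k\le\lfloor\frac{n}{2}\rfloor-2$. For any such $k$, Theorem \ref{TheoAR_2} then yields that $\mathcal{S}_n$ is not $(h+1)$-defective, where $h=\lfloor\frac{k+1}{2}\rfloor$. I would then take $k=\lfloor\frac{n}{2}\rfloor-2$, the largest admissible value, and check by a short case analysis on the parities of $n$ and of $\lfloor\frac{n}{2}\rfloor$ that $h+1=\lfloor\frac{k+1}{2}\rfloor+1=\lfloor\frac{n+2}{4}\rfloor$. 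This establishes non-defectivity at the top of the claimed range.

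To cover every $h\le\lfloor\frac{n+2}{4}\rfloor$ rather than only the extremal value, I would observe that as $k$ runs through $0,1,\dots,\lfloor\frac{n}{2}\rfloor-2$ the quantity $\lfloor\frac{k+1}{2}\rfloor+1$ takes every integer value from $1$ up to $\lfloor\frac{n+2}{4}\rfloor$; hence each target $h$ in the range is realised by some admissible $k$, and Theorem \ref{TheoAR_2} applies for that $k$. The step I would be most careful about is precisely this last point: non-defectivity is \emph{not} monotone in $h$ (for instance $\mathcal{S}_4\subset\mathbb{P}^{34}$ is not $2$-defective yet is $3$-defective, as in Remark \ref{rem2}), so one cannot simply infer non-$h$-defectivity for small $h$ from the extremal bound, and a suitable $k$ must be exhibited for each individual $h$. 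Beyond this, the only real work is the floor-function bookkeeping, which is routine once the parity cases are separated.
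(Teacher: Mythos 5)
Your proposal is correct and follows exactly the paper's own route: invoke Proposition \ref{2OscRSS} for strong $2$-osculating regularity, Proposition \ref{proj_osc} for birationality of $\Pi_{T_p^k}$ with $k=\lfloor\frac{n}{2}\rfloor-2$, and conclude via Theorem \ref{TheoAR_2}. The only difference is that you spell out the floor-function arithmetic identifying $\lfloor\frac{k+1}{2}\rfloor+1$ with $\lfloor\frac{n+2}{4}\rfloor$ and the need to realise each $h$ in the range by a suitable $k$, details the paper leaves implicit.
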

\begin{proof}
By Proposition \ref{2OscRSS} $\mathcal{S}_n\subset\mathbb{P}(\Delta)$ has strong $2$-osculating regularity. Therefore, to conclude it is enough to apply Proposition \ref{proj_osc} and Theorem \ref{TheoAR_2}.
\end{proof}

In the following table we work out the first cases of Theorem \ref{th_main_SS}.
\begin{center}
\begin{tabular}{cccl}
\hline
\textit{n} & & & Theorem \ref{th_main_SS}\\
\hline
6,7,8,9 & & & not defective for $h\leq 2$\\
10,11,12,13 & & & not defective for $h\leq 3$\\
$\geq 14$ & & & not defective for $h\leq\left\lfloor\frac{n+2}{4}\right\rfloor$\\
\hline
\end{tabular}
\end{center}
In particular, Theorem \ref{th_main_SS} improves the main result of \cite{An11} for $n\geq 14$. Note that Theorem \ref{th_main_SS} yields that $\mathcal{S}_7\subset\mathbb{P}^{63}$ is not $2$-defective while by \cite{An11} it is $3$-defective. Furthermore, by Theorem \ref{th_main_SS} the Spinor variety $\mathcal{S}_8\subset\mathbb{P}^{127}$ also is not $2$-defective and by \cite{An11} it is $h$-defective for $h = 3,4$.

\bibliographystyle{amsalpha}
\bibliography{Biblio}
\end{document}